\def\di{\displaystyle}
\def\N{\mathbb{N}}
\def\R{\mathbb{R}}
\newtheorem{df}{Definition}
\newtheorem{lem}{Lemma}
\newtheorem{twr}{Theorem}
\newtheorem{rem}{\textbf{Remark}}
\begin{document}
\title[Discrete and continuous fractional persistence problems]{Discrete and continuous fractional persistence problems -- the positivity property and applications}
\author{Jacky Cresson and Anna Szafra\'{n}ska}
\date{}
\maketitle

\begin{abstract}
In this article, we study the continuous and discrete fractional persistence problem which looks for the persistence of properties of a given classical ($\alpha=1$) differential equation in the fractional case (here using fractional Caputo's derivatives) and the numerical scheme which are associated (here with discrete Grünwald-Letnikov derivatives). Our main concerns are positivity, order preserving ,equilibrium points and stability of these points. We formulate explicit conditions under which a fractional system preserves positivity. We deduce also sufficient conditions to ensure order preserving. We deduce from these results a fractional persistence theorem which ensures that positivity, order preserving, equilibrium points and stability is preserved under a Caputo fractional embedding of a given differential equation. At the discrete level, the problem is more complicated. Following a strategy initiated by R. Mickens dealing with non local approximations, we define a non standard finite difference scheme for fractional differential equations based on discrete Grünwald-Letnikov derivatives, which preserves positivity unconditionally on the discretization increment. We deduce a discrete version of the fractional persistence theorem for what concerns positivity and equilibrium points. We then apply our results to study a fractional prey-predator model introduced by Javidi and al.
\end{abstract}

\noindent

\textbf{Key words}: systems of fractional differential equations, non-standard difference methods, positivity, local truncation error, convergence.\\

\textbf{AMS subject classification: 26A33; 49M25; 65Q30}

\tableofcontents

\newpage
\part{Introduction}

In recent years, many models describing biology and epidemiology phenomena concerning population dynamics, dynamical behaviour of epidemic diseases, etc make use of fractional derivatives. These systems are most of the time fractional generalization of classical models, meaning that the classical derivative is directly replaced by a chosen notion of fractional derivative (Riemann-Liouville, Caputo, etc). Such kind of generalization can be described using the {\it fractional embeddings} formalism for ordinary differential equations introduced for example in \cite{cresson07,cresson-inizan,cresson13}. Informally, let us consider a differential equation of the form
\begin{equation}
\label{equa}
\di\frac{dx}{dt} =f(x) ,\ x\in \R^n ,\ n\in \N .
\end{equation}
We denote by $\mathcal{D}^{\alpha}$ a given choice of a fractional derivative (Riemann-Liouville, Caputo, etc). The fractional embedding of equation (\ref{equa}) is given by
\begin{equation}
\label{equaemb}
\di\mathcal{D}^{\bar{\alpha}} x =f(x) ,\ x\in \R^n ,\ n\in \N ,
\end{equation}
where $\bar{\alpha} =(\alpha_1 ,\dots ,\alpha_n)$, $x=(x_1, \do ts ,x_n)$ and $\mathcal{D}^{\bar{\alpha}} x =(\mathcal{D}^{\alpha_1 } x_1 ,\dots , \mathcal{D}^{\alpha_n } x_n )$.\\

In such a context, many natural questions arise which all deal with the persistence of some properties of the initial classical system under a fractional embedding. Even if the problem is first formulated in the continuous setting, the need for a numerical exploration of these generalizations will also produce a discrete analogue of this persistence problem.

\section{Fractional persistence problem : the continuous case}

The need for a fractional generalization of a given classical model is often due to new behaviors which can not be taken into account by the model. It is the case for example, for the mathematical model used for the dynamics of a dengue fever epidemic (see \cite{pooseh},\cite{diethlem}). In this situation, it can be useful to look for a fractional deformation of the initial system, trying to fit the fractional exponent of differentiation in order to catch properly the data (see for example \cite{diethlem}, $\S$.3.2, p.617). During this procedure however, one must be very careful. Indeed, the initial system possess specific properties (range of value for the variables, symmetries, first integrals, variational structures, etc) which come from the phenomenon itself and have nothing to do with the underlying mathematical framework used to model it. This is in particular the case in biology and more specifically in population dynamics when dealing with densities of populations : each variable must remain between $0$ and $1$. As a consequence, we are lead to the following problem :\\

{\bf Fractional persistence problem} : {\it Assume that the underlying classical ODE (i.e. $\alpha =1$) satisfies a set of properties $\mathcal{P}$. Under which conditions the fractional version satisfies also properties $\mathcal{P}$ ?}\\

Previous results on this problem was derived for the persistence of symmetries and Noether type theorem (see \cite{cresson07},\cite{bourdin-cresson2}) and also the persistence of a variational structure and more precisely a Lagrangian or Hamiltonian one (see \cite{cresson-inizan},\cite{cresson-inizan2}). \\

In this paper, we {\bf answer the persistence problem under fractional embedding for} what concerns {\bf the positivity property and the stability in the Caputo setting}. Our result is limited to a large class of differential equations which arises naturally from the necessary and sufficient conditions for positivity of the classical underlying differential equation. We recover in particular under natural assumptions a class introduced by Dimitrov and Kojouharov in \cite{dk} and containing a large class of population dynamics models. \\

Previous results in this direction have been obtained and discussed by numerous authors but up to our knowledge, none of these works cover our result. Precisely, we have found two kind of works related to different strategies of proof.\\

\begin{itemize}
\item The first series of articles by Vasundgara Devi  et al.  in \cite{vasun} and Girejko et al. \cite{girejko} deal with a generalization of the classical proof of the Nagumo-Brezis theorem in the classical Viability theory. The article by Girejko et al. \cite{girejko} provide explicit conditions for positivity (see Corollary 10,p.16) which reduce to the classical one. However, it seems that unfortunately the proof of their result is not complete (see \cite{caja}). The reason is that the generalization of the tangency condition in the fractional case is not correct. The result is corrected in \cite{caja} but no explicit conditions which can be easily check from the data of the fractional systems are given. Then, our result is not a consequence of these results and even our explicit conditions seems to be very difficult to deduce from the abstract tangency condition obtained in \cite{caja}.

\item More specific results exist in the literature, but the proofs which are given, in particular the one related to a {\it generalized version of the mean value theorem} \cite{trujillo} in the fractional case (see \cite{parra}, $\S$.4, Theorem 4.2 p. 2221 or (\cite{goufo},Property ii) p.4), (\cite{rihan},Theorem 6 p.4 and Remark 3 p.2), (\cite{arafa},Theorem 2 p.541)) are incomplete. In fact, these results always assume that solution of the fractional differential equations are such that the solution is continuous as well as the fractional derivative of this solution. This point is never proved or discussed in the previously cited article
\end{itemize}

\section{Fractional persistence problem : the discrete case}

Having constructed a viable fractional system satisfying the basic properties, the basic problem is to study its dynamical behavior. However, as in the classical case and even more difficult, it is usually not possible to solve the fractional differential equations and to provide explicit solutions. As a consequence, we are leaded to a numerical study of these equations. There exists already some numerical scheme in the literature. The most popular is the famous {\it Gr\"uwald-Letnikov scheme} (see for example \cite{scherer} or \cite{pod}). When no fundamental properties of the fractional model need to be satisfied, then the classical Gr\"unwald-Letnikov scheme gives, for a sufficiently small $h$, results which are in good agreement with the expected behavior of the system (see \cite{scherer}).

In general, {\bf simulations} are {\bf used to validate a given continuous model}. When this model satisfies fundamental properties like positivity, stability, etc, then one must be sure that the numerical scheme preserves these properties. {\bf The problem is that the convergence of a given numerical scheme is not sufficient to ensure the persistence of these properties at the discrete level}. For example, the classical Gr\"unwald-Letnikov scheme does not preserve positivity. So, the simulations that we obtain in this case can be completely unrealistic or producing artifacts with respect to the continuous model. We then formulate the following discrete version of the fractional persistence problem :\\

{\bf Discrete fractional persistence problem} : {\it Assume that the underlying classical ODE (i.e. $\alpha =1$) satisfies a set of properties $\mathcal{P}$ and that these properties are preserved under the fractional embedding of the equation. Can we construct a discrete numerical scheme for the fractional equation such that the discrete analogue of the properties $\mathcal{P}$ are satisfied ?}\\

This problem is already known in the study of classical differential equations and is an active field of research. We refer in particular to the work of Ronald E. Mickens (see for example \cite{mic3}) which has designed during the 80's new finite differences scheme to preserve dynamical properties of classical differential equations which he called {\it non-standard methods}.\\

In this paper, we adapt the strategy of R. Mickens to the fractional case in the context of the positivity property. We {\bf define a non standard finite difference scheme for our class of fractional differential systems which preserves positivity}. In particular, we {\bf prove} the {\bf convergence} of our scheme and give a comparison with results obtain by the classical Gr\"unwald-Letnikov scheme.\\

Several authors have proposed numerical scheme for fractional differential equations. For example, and the list is not exhaustive, we refer to \cite{scherer},\cite{pod},\cite{delfim},\cite{ata1},\cite{ata2}, etc. However, most of these schemes are not proved to be convergent. Moreover, up to our knowledge, {\bf none of these numerical schemes preserve positivity} or stability.

\section{Application : a fractional predator-prey model}

In \cite{JN}, the authors introduce a fractional predator-prey model and provide some numerical simulations. Numerous problems appears during this study.\\

\begin{itemize}
\item The fractional predator-prey model is a fractional embedding of a classical predator-prey model defined by S. Chakraborty and al. in \cite{chakra} which generalizes H. Freedman's two dimensional model with one carrying capacity (see \cite{freedman},Chap.7,$\S$.7.3,p.147). The two models, classical and fractional, are not proved to satisfy the positivity property. However, as these models deal with population densities, they need to satisfy the positivity property in order to be viable.

\item In \cite{JN}, the authors use a particular numerical scheme due to Atanackovic and Stankovich in (\cite{ata1},\cite{ata2}). This method is not proved to be convergent and deserves more study as already pointed out in \cite{ata2}.

\item Even assuming the convergence of the method, the numerical scheme produces for small value of the time step increment, numerical results which are not in accordance with the expected theoretical behaviour. In particular, the stability of the equilibrium points and positivity of the solutions are not preserved.
\end{itemize}

In this paper, using our results, we {\bf prove that the fractional predator-prey model satisfies the positivity property}, allowing us to {\bf ensure the viability of the model}. Moreover, {\bf applying our numerical scheme}, we obtain a {\bf very good agreement} with the expected {\bf theoretical behavior}. We also perform several simulations in order to study the robustness of our scheme. We obtain numerical evidences that {\bf our numerical scheme} produce {\bf simulations equivalent to} the one obtained via the {\bf Gr\"unwald-Letnikov} method but with a {\bf time step increment} at least {\bf ten times bigger}.

\section{Organization of the paper}

The plan of the paper is the following. In Section \ref{remind} we remind some classical definition about the Caputo fractional derivatives and the discrete Gr\"unwald-Letnikov fractional derivatives. We also recall classical results about existence and unicity of solutions for the Caputo Cauchy problem. In Section \ref{class} we introduce our class of fractional differential systems generalizing classical models in biology and medicine. We prove in particular that these systems satisfy the positivity property. In Section \ref{scheme}, we define a non standard finite difference scheme for our class of fractional differential systems and prove its convergence as well as the fact that its preserves positivity. We also gives a comparison with the results obtain using a classical Gr\"unwald Letnikov method on a toy model. The last Section is devoted to the numerical study of the fractional prey-predator model introduced in \cite{JN}. We focus on some specific problems related to the simulation provided in \cite{JN} using the numerical scheme of Atanackovic and Stankovich in \cite{ata2}. In particular, we prove that in all these case, our numerical scheme behaves very nicely and for a time step increment at least ten times bigger as what is need in their computations.




\newpage
\part{Continuous fractional persistence problems - the positivity property}
\setcounter{section}{0}

\section{Reminder about fractional differential equations}
\label{remind}
This section contains the basic definitions and properties in the theory of the fractional calculus.

\subsection{Caputo's fractional derivatives}

In first introduce the following notations. Let $(a,b)\in \R^2$ such that $a<b$. We denote by $L^1$ the usual Lebesgue space and by $AC:=AC ([a,b],\R^n )$ the space of absolutely continuous functions on $[a,b]$.\\

The {\it left fractional Riemann-Liouville integrals} of order $\alpha>0$ of $x\in L^1$ is defined by
\begin{equation}
I_{a+}^{\alpha} [x] (t) :=\di\frac{1}{\Gamma (\alpha )} \di\int_a^t \di\frac{x(s)}{(t-s)^{1-\alpha}} ds,\ t\in [a,b]\ q.e.
\end{equation}

Recall that a function $x\in AC$ if and only if there exists a couple $(c,\phi )\in \R^n \times L^1$ such that
\begin{equation}
\label{ac-cond}
x(t)=c+I_{a+}^1 [\phi ](t),\ t\in [a,b].
\end{equation}
In this case, we have $c=x(a)$ and $\phi(t)=\dot{x}(t)$, $t\in [a,b]$ a.e.

\begin{df}
The Caputo differential operator of order $\alpha>0$ is given by
\begin{equation}
\label{cap}
_{c}D_{a+}^{\alpha}x(t)=\frac{1}{\Gamma(m-\alpha)}\int_{a}^{t}(t-s)^{m-\alpha-1}\frac{d^m}{dt^m}x(s)ds,
\end{equation}
where $\Gamma(\cdot)$ is the Euler gamma function and $m=\lceil \alpha\rceil$ defines the smallest integer larger then $\alpha$.
\end{df}

The Caputo derivative exists if and only if $x\in AC$. In that case, one obtain the following representation
\begin{equation}
_{c}D_{a+}^{\alpha} [x](t)=I_{a+}^{1-\alpha} [\phi ] ,
\end{equation}
where $x$ is identified with a couple $(x(a),\phi)$ satisfying (\ref{ac-cond}).

\subsection{Existence and regularity of solutions}

Let us consider the general fractional autonomous differential equation with Caputo derivative (\ref{cap})
\begin{equation}
\label{eq}
\di _{c}D^{\alpha}_{a+} [x] =f(x),
\end{equation}
where $f:\R^n\to\R^n$. This class of equation covers most of the known fractional models used in applications to biology.\\

A {\it local solution} to the Cauchy problem is the data of a function $x$ such that $\di _{c}D^{\alpha}_{a+} [x]$ exists and satisfies equation (\ref{eq}).\\

A result from K. Diethlem  gives some conditions which ensure the existence of local solutions for the Cauchy problem.

\begin{twr}
Let $f$ be locally Lipschitz. Then there exists a unique local solution to the Cauchy problem (\ref{eq}).
\end{twr}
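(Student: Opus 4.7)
The plan is to recast the Cauchy problem as a Volterra integral equation and then apply the Banach fixed point theorem on a suitably small time interval. Concretely, prescribing the standard Caputo initial data $x^{(k)}(a)=x_{0}^{(k)}$ for $k=0,\dots,m-1$ where $m=\lceil \alpha\rceil$, one uses the identity $I_{a+}^{\alpha}\circ {}_{c}D_{a+}^{\alpha}[x](t) = x(t)-\sum_{k=0}^{m-1}\frac{(t-a)^{k}}{k!}x_{0}^{(k)}$ for $x\in AC$, a consequence of the representation ${}_{c}D_{a+}^{\alpha}[x]=I_{a+}^{m-\alpha}[x^{(m)}]$ recalled just before the statement. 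Applying $I_{a+}^{\alpha}$ to (\ref{eq}) therefore shows that $x$ is a local solution if and only if $x$ satisfies
\begin{equation*}
x(t)=\sum_{k=0}^{m-1}\frac{(t-a)^{k}}{k!}x_{0}^{(k)}+\frac{1}{\Gamma(\alpha)}\int_{a}^{t}(t-s)^{\alpha-1}f(x(s))\,ds.
\end{equation*}

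Next I would set up the fixed point framework. Fix $r>0$ and let $L$ be the Lipschitz constant of $f$ on the closed ball $\bar{B}(x_{0},r)\subset\R^{n}$, together with $M:=\sup_{\bar{B}(x_{0},r)}\|f\|$. On the space $E_{h}=C([a,a+h],\R^{n})$ equipped with the uniform norm, define the Picard operator $T$ by the right-hand side of the integral equation above, and let $K_{h}\subset E_{h}$ be the closed set of $x$ with $\|x(t)-P(t)\|\le r$ for all $t$, where $P(t)$ denotes the Taylor polynomial. A direct estimate $\|T[x](t)-P(t)\|\le M(t-a)^{\alpha}/\Gamma(\alpha+1)$ shows that $T(K_{h})\subset K_{h}$ provided $h^{\alpha}\le r\Gamma(\alpha+1)/M$.

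For uniqueness/contraction, the key estimate is
\begin{equation*}
\|T[x](t)-T[y](t)\|\le \frac{L}{\Gamma(\alpha)}\int_{a}^{t}(t-s)^{\alpha-1}\|x(s)-y(s)\|\,ds,
\end{equation*}
and I would iterate this inequality: the $n$-fold composition $T^{n}$ satisfies $\|T^{n}[x]-T^{n}[y]\|_{\infty}\le \bigl(L\,h^{\alpha}/\Gamma(\alpha+1)\bigr)^{n}\,\|x-y\|_{\infty}$ times a constant coming from the Beta function convolution. Since this factor tends to $0$, some iterate $T^{n}$ is a strict contraction on $K_{h}$, and the Banach fixed point theorem yields a unique $x\in K_{h}$ with $T[x]=x$. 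Retracing the equivalence, this $x$ is the unique local solution of (\ref{eq}).

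The main obstacle is handling the weak singularity of the kernel $(t-s)^{\alpha-1}$: a naive contraction argument on the sup norm fails when $\alpha\in (0,1)$ unless $h$ is extremely small, so I would rely on the iterated-kernel/Beta-function computation (equivalently, a Bielecki-type weighted norm) to absorb the singularity, which is the only nontrivial technical point. Everything else reduces to the local Lipschitz hypothesis, the integral identity between $I_{a+}^{\alpha}$ and ${}_{c}D_{a+}^{\alpha}$ recalled in the excerpt, and the characterization of $AC$ functions by (\ref{ac-cond}).
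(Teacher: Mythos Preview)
The paper does not actually prove this theorem: it is stated as ``a result from K.~Diethelm'' and quoted without argument. Your approach---reduction to the equivalent Volterra integral equation followed by the Banach fixed point theorem---is exactly the standard proof (and is essentially Diethelm's), so there is nothing substantive to compare against; your outline is correct.

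One minor technical remark: your concern about the weakly singular kernel $(t-s)^{\alpha-1}$ is overstated for the \emph{local} statement. The crude bound
\[
\|T[x]-T[y]\|_{\infty}\le \frac{L\,h^{\alpha}}{\Gamma(\alpha+1)}\,\|x-y\|_{\infty}
\]
already makes $T$ a strict contraction on $K_h$ as soon as $h^{\alpha}<\Gamma(\alpha+1)/L$, which is a perfectly ordinary smallness condition on $h$ (no worse than in the classical case $\alpha=1$). The iterated-kernel/Beta-function computation or a Bielecki weighted norm is only genuinely needed when one wants existence on a \emph{prescribed} interval independent of $L$; for the local existence and uniqueness asserted here, the naive contraction argument suffices and you can drop that part of the discussion.
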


The condition on $f$ to be locally Lipschitz is certainly not optimal. However, all the examples that we have in the following are satisfying this property. For more general class of functions in the context of {\it weak solutions}, we refer to the work of L. Bourdin \cite{bo}.\\

A useful regularity property of the solutions is :

\begin{twr}
\label{regularity}
Let $f$ be locally Lipschitz and $x$ be a local solution of the Cauchy problem. Then, we have $x\in AC$ and $\di _{c}D^{\alpha}_{a+} [x] \in C^0$.
\end{twr}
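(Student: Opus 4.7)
The plan is to show that both conclusions follow almost directly from the definitions introduced just above the statement, together with the hypothesis that $f$ is locally Lipschitz. The argument has two short steps, one for each conclusion.

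First, I would appeal to the definition of local solution. By assumption, $_{c}D^{\alpha}_{a+}[x]$ exists, so the characterization recalled just before the Caputo derivative section of the paper gives $x\in AC$ immediately. Explicitly, the paper states that the Caputo derivative of $x$ exists if and only if $x\in AC$, with the representation $_{c}D^{\alpha}_{a+}[x]=I_{a+}^{1-\alpha}[\phi]$ where $\phi=\dot x$. So the first part of the conclusion is built into the notion of local solution and requires no further work.

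Second, I would use the equation itself to transfer continuity from $f\circ x$ to the Caputo derivative. Since $x\in AC$, in particular $x\in C^0([a,b],\R^n)$. Because $f$ is locally Lipschitz on $\R^n$, it is in particular continuous, and hence the composition $t\mapsto f(x(t))$ is continuous on $[a,b]$. The fractional Cauchy problem (\ref{eq}) then reads
\begin{equation*}
{}_{c}D^{\alpha}_{a+}[x](t)=f(x(t))
\end{equation*}
for every $t$ where the left-hand side is defined. Since the right-hand side is a genuinely continuous function on $[a,b]$, this identifies $_{c}D^{\alpha}_{a+}[x]$ with an element of $C^0([a,b],\R^n)$ and yields the second claim.

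I do not expect a serious obstacle here: given the way the paper has defined a local solution (pointwise existence of $_{c}D^{\alpha}_{a+}[x]$ together with the equation), the statement is basically a bookkeeping corollary of the hypotheses. The only potential subtlety would be if one only assumed the equation to hold almost everywhere on $[a,b]$; in that case one would have to observe that the Riemann--Liouville representative $I_{a+}^{1-\alpha}[\dot x]$ coincides a.e. with a continuous function, and take that continuous version as the value of $_{c}D^{\alpha}_{a+}[x]$. This is precisely the regularity gap that the introduction flags as being overlooked in several earlier papers, and our proof makes it explicit rather than implicit.
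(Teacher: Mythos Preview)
Your proposal is correct and follows essentially the same approach as the paper's proof: both use that existence of the Caputo derivative forces $x\in AC\subset C^0$, then push continuity through the locally Lipschitz (hence continuous) $f$ and read off $_{c}D^{\alpha}_{a+}[x]=f(x)\in C^0$ from the equation itself. Your added remark about the a.e.\ versus everywhere issue is a reasonable clarification, but the core argument matches the paper's line for line.
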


\begin{proof}
If $x$ is a solution of the fractional differential equation then it possesses a fractional Caputo derivative $\di _{c}D^{\alpha}_{a+} [x]$. This implies that $x\in AC$ and in particular that $x\in C^0$. As $f$ is locally Lipschitz, we deduce that $f(x) \in C^0$ and as a consequence $\di _{c}D^{\alpha}_{a+} [x] \in C^0$.
\end{proof}

This Theorem is precisely the missing argument in all the proofs that we have found using the generalized mean value theorem stated in \cite{trujillo} (see below).

\section{Linear stability and equilibrium points}

\subsection{Equilibrium points}

A useful property of the Caputo fractional embedding of differential equations is that it preserves exactly the set of equilibrium points. Indeed, we have :

\begin{df}
A point $x_0$ is an equilibrium point of (\ref{eq}) if and only if the solution of of the Cauchy problem with $x(0)=x_0$ is such that $x(t)=x_0$ for all $t\geq 0$.
\end{df}

A useful characterization of equilibrium points is given by :

\begin{lem}
A point $x_0$ is an equilibrium point of (\ref{eq}) if and only if $f(t,x_0)=0$ for all $t\geq 0$.
\end{lem}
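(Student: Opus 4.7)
The plan is to prove the lemma as a pair of implications, noting that since equation (\ref{eq}) is autonomous the condition should be read as $f(x_0)=0$.

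For the forward direction, I would suppose $x_0$ is an equilibrium, so that $x(t)\equiv x_0$ is a solution of the Cauchy problem with $x(a)=x_0$. The key computation is that the Caputo derivative of a constant function vanishes: starting from the integral representation (\ref{cap}), the integrand contains $\frac{d^m}{dt^m}x(s)$, which is identically zero for a constant function, hence $_{c}D_{a+}^{\alpha}[x_0](t)=0$ for all $t\ge a$. Substituting into (\ref{eq}) yields $f(x_0)=0$.

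For the backward direction, I would assume $f(x_0)=0$ and check that the constant function $\tilde{x}(t)\equiv x_0$ is a solution of the Cauchy problem with initial data $x_0$: indeed $\tilde{x}\in AC$, $_{c}D_{a+}^{\alpha}[\tilde{x}](t)=0=f(x_0)=f(\tilde{x}(t))$, so $\tilde{x}$ satisfies (\ref{eq}). Then the uniqueness part of Diethelm's theorem (invoked from the previous subsection under the locally Lipschitz assumption on $f$) forces any local solution of the Cauchy problem with $x(a)=x_0$ to coincide with the constant $x_0$, which is exactly the definition of an equilibrium.

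The proof is essentially routine; the only subtle point is the need for the Lipschitz hypothesis on $f$ in the backward direction to invoke uniqueness. If $f$ is only assumed continuous, existence of the constant solution would still hold but one could not exclude a priori the presence of another non-constant solution starting from $x_0$, so the equivalence might fail. I would either state the lemma under the standing locally Lipschitz assumption made earlier for the Cauchy problem, or add a brief remark that this hypothesis is what makes the reverse implication work.
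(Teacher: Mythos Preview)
Your proof is correct. The paper does not actually supply a proof of this lemma; it is stated without justification and immediately used to deduce that the set of equilibrium points is preserved under the Caputo fractional embedding. Your argument fills this gap cleanly: the forward direction relies on the standard fact that the Caputo derivative of a constant vanishes, and the backward direction correctly invokes the uniqueness part of Diethelm's theorem under the locally Lipschitz assumption. Your remark that the reverse implication genuinely requires uniqueness (and hence the Lipschitz hypothesis) is a pertinent observation that the paper leaves implicit.
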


As this characterization is not related to the left-hand side of the equation, we deduce :

\begin{lem}
\label{persistence-equilibrium}
The set of equilibrium points is exactly preserved under a Caputo fractional embedding.
\end{lem}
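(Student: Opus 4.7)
The plan is to show that the set of equilibrium points of the classical equation $dx/dt = f(x)$ and the set of equilibrium points of its Caputo fractional embedding ${}_c D^\alpha_{a+}[x] = f(x)$ both coincide with the zero set of $f$, so they are equal.

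First I would invoke the characterization recalled in the preceding lemma: a point $x_0$ is an equilibrium of the fractional equation (\ref{eq}) if and only if $f(x_0)=0$. The same characterization, applied with $\alpha = 1$, gives that $x_0$ is an equilibrium of the classical ODE $\dot{x} = f(x)$ if and only if $f(x_0)=0$. Since the right-hand side $f$ is the same object in both equations — this is precisely the content of what the authors call a Caputo fractional embedding — the zero sets $\{x_0 : f(x_0) = 0\}$ coincide, and therefore so do the equilibrium sets.

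To make the argument fully rigorous one has to justify both directions of the characterization used. The nontrivial direction (that $f(x_0) = 0$ implies $x(t) \equiv x_0$ is actually the solution of the Cauchy problem with data $x_0$) rests on two ingredients: the fact that the Caputo derivative of a constant function vanishes — which distinguishes this setting from the Riemann-Liouville case and is a direct consequence of formula (\ref{cap}) since $(d^m/dt^m) c = 0$ — and the uniqueness statement of the Diethelm theorem recalled above, which guarantees that the constant function is the only local solution issued from $x_0$. The converse direction uses that if $x(t) \equiv x_0$ solves the fractional equation, then again ${}_c D^\alpha_{a+}[x_0] = 0$, forcing $f(x_0)=0$.

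I do not expect a real obstacle here: the statement is essentially a corollary of the previous lemma combined with the vanishing of the Caputo derivative on constants, and the lemma exists precisely to package this observation. The only point deserving care, and which I would state explicitly in the proof, is the distinction with the Riemann-Liouville embedding, where the derivative of a nonzero constant is not identically zero, and consequently equilibrium points are not preserved — this is what makes the Caputo framework the natural one for the persistence program developed in this paper.
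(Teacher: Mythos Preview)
Your proposal is correct and follows the same approach as the paper: both reduce the statement to the characterization $f(x_0)=0$ given in the preceding lemma, noting that this condition is independent of the derivative appearing on the left-hand side. The paper is in fact more terse --- it offers no proof beyond the one-line remark ``As this characterization is not related to the left-hand side of the equation, we deduce\ldots'' --- while you spell out the two ingredients (vanishing of the Caputo derivative on constants, uniqueness) and add the contrast with the Riemann--Liouville case; this extra detail is welcome but not a different argument.
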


\subsection{Linear stability}

Usually, the dynamics of the classical differential equation is well studied for what concerns the equilibrium points and their stability. A natural question is then to see if the stability nature of a given equilibrium point is preserved under a Caputo fractional embedding. In the following, we restrict ourself to {\it linear stability}.\\

We use the definition of {\it Lyapounov stability} or simply stability of an equilibrium point. In the following, we denote by $B(x,r)$ the open ball centered at $x$ of radius $r$, that is the set $\{ y\in \mathbb{R}^n ,\ \parallel y-x\parallel <r \}$ where $\parallel \cdot \parallel$ denotes the canonical norm on $\mathbb{R}^n$.

\begin{df}
An equilibrium point $x_0$ is said to be stable if for all $\epsilon >0$, there exists a $\delta >0$ such that
for all $x\in B(x_0 ,\delta )$ the solution $\phi (t,x)$ stays in $B(x_0 ,\epsilon )$ for all $t\geq 0$. If moreover, we have $\lim_{t\rightarrow +\infty} \phi (t,x)=x_0$ the equilibrium point is said to be asymptotically stable.
\end{df}

The problem of the stability of an equilibrium point for a non linear Caputo fractional differential equation is difficult. We refer to \cite{pod} for some results in this direction. For linear systems however, the situation is simple and well understood. In particular, we have the following well known result due to D. Matignon \cite{mat}.

\begin{twr}
Let us consider a linear Caputo fractional differential equation
\begin{equation}
_{c}D^{\alpha}_{a+} [x] = A.x ,
\end{equation}
where $A$ is a square matrix of dimension $n$. The origin is stable if and only if the eigenvalues of $A$, denoted by $\lambda_i$, $i=1,\dots ,n$ satisfy the condition
$$
\mid \mbox{\rm arg} (\lambda_i ) \mid > \alpha \di\frac{\pi}{2} ,
\eqno{(S)_{\alpha}}
$$
where $\mbox{\rm arg} (\lambda )$ denotes the argument of $\lambda \in \mathbb{C}$.
\end{twr}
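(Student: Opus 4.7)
The plan is to reduce the linear system to Jordan canonical form, solve each block explicitly using the Laplace transform, and then read off the stability criterion from the asymptotic behavior of the Mittag-Leffler function. To begin, write $A = P J P^{-1}$ with $J$ in Jordan canonical form. The change of variables $y = P^{-1} x$ transforms the equation into $_{c}D^\alpha_{a+} [y] = J y$, since the Caputo derivative commutes with multiplication by a constant matrix; Lyapunov stability is invariant under an invertible linear change of coordinates, so it suffices to analyze each Jordan block separately.

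For the scalar equation $_{c}D^\alpha_{0+}[x] = \lambda x$ with $x(0) = x_0$ (taking $a = 0$ without loss of generality), I would apply the Laplace transform, using the identity
\[ \mathcal{L}[_{c}D^\alpha_{0+}[x]](s) = s^\alpha X(s) - s^{\alpha-1} x(0). \]
Solving the resulting algebraic equation yields $X(s) = x_0\, s^{\alpha-1}/(s^\alpha - \lambda)$, whose inverse Laplace transform is the Mittag-Leffler function
\[ x(t) = x_0 \, E_\alpha(\lambda t^\alpha), \qquad E_\alpha(z) := \sum_{k=0}^\infty \frac{z^k}{\Gamma(\alpha k + 1)}. \]
For a Jordan block of size $r$ at eigenvalue $\lambda$, the same method produces solutions expressed in terms of $\partial^j_\lambda E_\alpha(\lambda t^\alpha)$ for $0 \leq j < r$, which introduce only polynomial-in-$t^\alpha$ prefactors and therefore inherit the same asymptotic regime as $E_\alpha(\lambda t^\alpha)$ itself.

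Next I would invoke the classical asymptotic expansion of the Mittag-Leffler function: for $0 < \alpha < 2$ and $|z| \to \infty$, $E_\alpha(z)$ grows like $\alpha^{-1}\exp(z^{1/\alpha})$ whenever $|\arg z| \leq \alpha\pi/2$, whereas it admits the decaying expansion $-\sum_{k=1}^N z^{-k}/\Gamma(1 - \alpha k) + O(|z|^{-N-1})$ whenever $|\arg z| > \alpha\pi/2$. Specializing to $z = \lambda t^\alpha$, whose argument equals $\arg \lambda$ for $t > 0$, one obtains $x(t) \to 0$ as $t \to \infty$ precisely when $(S)_\alpha$ holds at $\lambda$, and an unbounded solution when the strict reverse inequality holds. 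Combining the contributions from all Jordan blocks recovers the claimed equivalence.

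The main obstacle is the asymptotic expansion of $E_\alpha$ in the third step. It is not elementary and classically relies on a Hankel contour representation of $E_\alpha$ together with a residue or steepest-descent analysis. Once this single complex-analytic input is accepted, the remainder of the proof is essentially bookkeeping: Laplace inversion, Jordan-block algebra, and comparison of the two asymptotic regimes.
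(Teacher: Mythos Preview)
The paper does not supply its own proof of this statement: it simply quotes the result as ``the following well known result due to D.~Matignon \cite{mat}'' and moves on, so there is no in-paper argument to compare against. Your sketch is essentially the standard route to Matignon's criterion (Jordan reduction, Laplace transform, Mittag--Leffler solution, then the sectorial asymptotics of $E_\alpha$), and as such it is a reasonable reconstruction of what lies behind the citation.

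Two minor points if you want to tighten the sketch. First, the Jordan-block step is a little glib: the fundamental matrix for a size-$r$ block in the Caputo setting involves the generalized Mittag--Leffler functions $E_{\alpha,\beta}$ rather than literal $\lambda$-derivatives of $E_\alpha$, though the asymptotic conclusion is indeed the same. Second, you handle the strict inequality and the strict reverse inequality but say nothing about the borderline $|\arg\lambda|=\alpha\pi/2$; since the theorem is an ``if and only if'' for Lyapunov stability, that case needs a word (on the critical ray $E_\alpha(\lambda t^\alpha)$ is bounded but non-decaying, and the presence of nontrivial Jordan blocks there can produce polynomial growth, so the system fails to be stable in general). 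These are refinements rather than gaps; the core strategy is sound.
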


It must be noted that for $\alpha=1$, one recover the classical result.\\

The concept of {linear stability} for an equilibrium point used the notion of {\it linearized equation} at a given equilibrium point $x_0$.

\begin{df}
Let $x_0$ be an equilibrium point of the Caputo fractional equation (\ref{eq}). The linearized equation associated to (\ref{eq}) at the equilibrium point $x_0$ is defined as
\begin{equation}
_{c}D^{\alpha}_{a+} [x] =Df (t,x_0) .x ,
\end{equation}
where $Df (x_0)$ is the differential of $f$ at $x_0$.
\end{df}

We have :

\begin{df}[Linear stability] Let $x_0$ be an equilibrium point of the Caputo fractional equation (\ref{eq}). The point $x_0$ is said to be linearly stable (resp. unstable) if the origin is a stable (resp. unstable) equilibrium point of the associated linearized equation at $x_0$.
\end{df}

The previous Theorem gives a necessary and sufficient condition for an equilibrium point to be linear stable or unstable. Moreover, as we are considering the case $0<\alpha <1$, we have :

\begin{twr}
\label{persistence-stability}
The Caputo fractional embedding of a differential equation preserves the linear stability nature of the equilibrium points.
\end{twr}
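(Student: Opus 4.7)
The plan is to reduce the statement to Matignon's Theorem (the preceding theorem) by inspecting the effect of the fractional embedding on the linearized equation at an equilibrium point, and then comparing the two stability conditions.

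First, I would verify that the Caputo fractional embedding commutes with linearization. Given an equilibrium $x_0$ of $\dot{x}=f(x)$ (equivalently, by Lemma \ref{persistence-equilibrium}, of its Caputo fractional embedding), the matrix $A := Df(x_0)$ depends only on $f$ and $x_0$, not on the derivative operator appearing on the left-hand side. Hence the linearization at $x_0$ of the classical system is $\dot{y}=A\cdot y$ while the linearization at $x_0$ of the fractional embedded system is $_{c}D^{\alpha}_{a+}[y]=A\cdot y$, and both linear systems share exactly the same spectrum $\{\lambda_1,\dots,\lambda_n\}$.

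Next, I would apply Matignon's Theorem in both regimes. At $\alpha=1$, the origin is stable for $\dot{y}=A\cdot y$ iff every eigenvalue satisfies $|\arg(\lambda_i)|>\pi/2$; for $0<\alpha<1$, the origin is stable for $_{c}D^{\alpha}_{a+}[y]=A\cdot y$ iff every eigenvalue satisfies $|\arg(\lambda_i)|>\alpha\pi/2$. The key observation is that since $0<\alpha<1$, we have $\alpha\pi/2<\pi/2$, so the fractional stability cone $\{\lambda\in\mathbb{C}:|\arg(\lambda)|>\alpha\pi/2\}$ strictly contains the classical one $\{\lambda\in\mathbb{C}:|\arg(\lambda)|>\pi/2\}$. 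Consequently, linear stability of $x_0$ for the classical equation forces every $\lambda_i$ to lie in the classical cone, hence \emph{a fortiori} in the fractional cone, so $x_0$ is linearly stable for the Caputo fractional embedding; taking contrapositives, linear instability at the fractional level transfers back to classical linear instability, and the nature of the equilibrium is preserved.

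I do not expect any genuine obstacle: the heavy lifting is carried by Matignon's Theorem, and the only substantive input is the trivial inclusion of cones induced by $\alpha\pi/2<\pi/2$. The one point that is worth stating carefully is the identification between the linearization of the fractional embedding and the fractional embedding of the classical linearization; this is immediate because the left-hand side is linear in $x$ and the Jacobian is computed only from $f$.
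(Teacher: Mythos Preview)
Your approach matches the paper's exactly: the theorem is stated there without proof beyond the preceding remark ``as we are considering the case $0<\alpha<1$'', and you correctly unpack this via Matignon's criterion and the cone inclusion $\alpha\pi/2<\pi/2$. Note only that your contrapositive yields ``fractionally unstable $\Rightarrow$ classically unstable'' and not its converse, so what is actually established is preservation of linear \emph{stability} (an eigenvalue with $\alpha\pi/2<|\arg\lambda|\leq\pi/2$ makes the classical linearization unstable yet the fractional one stable); this one-sided reading is consistent with how the paper subsequently uses the result in Theorem~\ref{FPT}.
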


The information about the stability/instability of the equilibrium points for the classical system is usually known. As a consequence, one can directly deduce from Theorem \ref{persistence-stability} the corresponding result in the fractional case.

\begin{rem}
In many article, there exists some confusion between linear stability and local stability. The local stability deals with the stability of the solution in a neighborhood of the equilibrium point. In \cite{sadar} for example, the authors asserts that the equilibrium of their model are locally stable. They use a result of Tavazoei and al. \cite{tavazoei} about local stability of Caputo fractional differential equations. However, the notion of stability used in \cite{tavazoei} is the linear one. The same is true for other articles (see in particular \cite{arafa}).
\end{rem}

\section{The positivity problem for Caputo fractional differential equations}

In this Section, we discuss the positivity problem and we give an explicit criterion which ensures that a given Caputo fractional differential equation satisfies the positivity property.

\subsection{The positivity property}

As already discussed in the Introduction, many models are dealing with quantities which must remain positive during the time evolution of the system. This is the case for example in population dynamics where the variable are associated to densities of population. \\

{\bf Positivity property} : {\it A fractional differential system of the form (\ref{eq}) satisfies the positivity property is for all initial conditions $x_0 \in \mathbb{R}_+^m$, the solution $\phi_t (x_0 )$ passing through $x_0$ at time $t=0$ remains positive for all $t>0$.}\\

The {\it positivity problem} is to find explicit necessary and sufficient conditions $(P)_{\alpha}$ under which a system satisfies the positivity property. When $\alpha=1$, the positivity problem is completely solved. We have (see \cite{pavel,walter}) :

\begin{twr}
\label{posi}
A system of the form (\ref{eq}) with $\alpha =1$ satisfies the positivity property if and only if for all $i=1,\dots ,m$, $f_i (x)\geq 0$ for all $x\in \mathbb{R}_+^m$ such that $x_i=0$ $(P)_1$.
\end{twr}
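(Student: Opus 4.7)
The plan is to treat the two directions separately, handling necessity by a direct right-derivative argument and sufficiency by a strict-inward perturbation of the vector field together with a limiting argument. Local Lipschitz continuity of $f$ (inherited from the ambient setting in Section~\ref{remind}) will give existence, uniqueness, and continuous dependence, which are the tools I need.

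For necessity, I would fix an index $i$ and a boundary point $x\in\mathbb{R}_+^m$ with $x_i=0$, and consider the solution $\phi_t(x)$ of \eqref{eq} at $\alpha=1$. By the positivity property, $\phi_t(x)_i\ge 0$ for all $t\ge 0$, while $\phi_0(x)_i=0$. Dividing by $t>0$ and letting $t\to 0^+$, the right derivative at the origin is nonnegative, and since the ODE gives this derivative as $f_i(x)$, the condition $(P)_1$ follows. This step uses nothing more than the definition.

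For sufficiency, assume $(P)_1$, fix $x_0\in\mathbb{R}_+^m$, and introduce the perturbed family
\[
\dot y^{\epsilon,\delta}=f(y^{\epsilon,\delta})+\epsilon\mathbf{1},\qquad y^{\epsilon,\delta}(0)=x_0+\delta\mathbf{1},
\]
with $\mathbf{1}=(1,\dots,1)$ and $\epsilon,\delta>0$ small. The key claim is that $y^{\epsilon,\delta}$ stays in the open positive orthant: suppose on the contrary that $t^\star>0$ is the first instant at which some coordinate $y_i^{\epsilon,\delta}$ vanishes; then $y_i^{\epsilon,\delta}(t^\star)=0$ while $y_j^{\epsilon,\delta}(t^\star)\ge 0$ for all $j$, so $(P)_1$ forces $f_i(y^{\epsilon,\delta}(t^\star))\ge 0$, hence $\dot y_i^{\epsilon,\delta}(t^\star)\ge\epsilon>0$; but $y_i^{\epsilon,\delta}$ decreases from positive to $0$ on $[0,t^\star]$, so the left derivative must be $\le 0$, a contradiction. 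With strict positivity of the perturbed solutions in hand, I would then let $\epsilon,\delta\to 0$ and use continuous dependence on parameters and initial data to pass to the limit $y^{\epsilon,\delta}(t)\to\phi_t(x_0)$, preserving the inequality $\phi_t(x_0)\ge 0$.

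The delicate point is not the perturbation trick itself but the uniformity needed in the limit: I must ensure that all the perturbed solutions exist on a common interval $[0,T]$ independent of $(\epsilon,\delta)$ for small enough parameters, and that the convergence is uniform on $[0,T]$. For a locally Lipschitz $f$ this is standard (the perturbed vector fields are uniformly Lipschitz on any fixed compact set, and a Gr\"onwall estimate controls the difference), but it must be invoked explicitly; otherwise one only gets $\phi_t(x_0)\ge 0$ up to a possibly shrinking time and the proof is incomplete. Once global-in-time positivity on the maximal interval is secured, the statement of Theorem~\ref{posi} follows.
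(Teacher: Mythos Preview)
The paper does not supply its own proof of Theorem~\ref{posi}; it is quoted as a classical result with a reference to \cite{pavel,walter}, so there is no in-paper argument to compare against. Your proof is correct and is essentially the standard one found in those references (Walter's textbook in particular): necessity via the right-hand derivative at a boundary point, and sufficiency via a strictly inward perturbation $f+\epsilon\mathbf{1}$ with a first-exit-time contradiction, followed by continuous dependence to remove the perturbation. You have also correctly isolated the only nontrivial technical point, namely the need for a common existence interval and uniform convergence as $(\epsilon,\delta)\to 0$, which is indeed handled by a Gr\"onwall estimate under the standing local Lipschitz hypothesis. Had the paper chosen to include a proof, yours would serve without modification.
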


For $0<\alpha <1$, the problem seems to be more difficult. Up to our knowledge, this problem was studied first in Vasundgara Devi et al.  in \cite{vasun} and Girejko et al. \cite{girejko}  in which some explicit sufficient (but a priori also necessary) conditions very similar to conditions (P) are obtained (see \cite{girejko},Corollary 10). However, it seems that the proof of these results are incomplete as pointed out in \cite{caja}. These authors obtain sufficient abstract conditions ensuring positivity and more generally viability. However, it seems not easy to deduce from their conditions an explicit one.\\

Another approach used the generalized mean value Theorem proved in \cite{trujillo} as for example in (\cite{parra}, $\S$.4, Theorem 4.2 p. 2221) or (\cite{goufo},Property ii) p.4), (\cite{rihan},Theorem 6 p.4 and Remark 3 p.2), (\cite{arafa},Theorem 2 p.541)). However, all these proofs are incomplete. They proved that under some assumptions on the solution and its fractional derivative, then one can prove the positivity. They never prove that these assumptions are satisfied by their model. Following the same strategy, and completing the argument, we prove that :

\begin{twr}
\label{thm-positivity}
Let $f$ be locally Lipschitz. Assume that $f$ satisfies condition $(P)_1$ then the fractional differential equation satisfies the positivity property.
\end{twr}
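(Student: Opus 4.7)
The plan is to argue by contradiction, following the same strategy as Parra, Goufo, Rihan, and Arafa but supplying the missing regularity via Theorem~\ref{regularity}. Suppose there exists $x_0\in\mathbb{R}_+^m$ such that the solution $x(t)=\phi_t(x_0)$ of the Cauchy problem leaves $\mathbb{R}_+^m$ at some positive time. By Theorem~\ref{regularity} the solution is in $AC\subset C^0$, so I can define the first crossing time
\[
t_0 := \inf\{t>0 : x_i(t)<0 \text{ for some } i\in\{1,\ldots,m\}\}.
\]
By continuity, $x(t_0)\in\mathbb{R}_+^m$ and there is at least one index $i$ with $x_i(t_0)=0$.

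Next I evaluate the equation at $t_0$: since $x$ is a solution of $_{c}D^{\alpha}_{0+}[x]=f(x)$ and both sides are continuous by Theorem~\ref{regularity} (the right-hand side because $f$ is locally Lipschitz and $x\in C^0$), the identity holds pointwise. In particular
\[
_{c}D^{\alpha}_{0+}[x_i](t_0) = f_i(x(t_0)) \geq 0,
\]
where the inequality is precisely condition $(P)_1$ applied to the point $x(t_0)\in\mathbb{R}_+^m$ with $x_i(t_0)=0$. At this stage I invoke the generalized mean value theorem of \cite{trujillo}, whose hypotheses are continuity of $x_i$ \emph{and} of $_{c}D^{\alpha}_{0+}[x_i]$ -- both delivered by Theorem~\ref{regularity} and both assumed without justification in the cited works. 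The MVT then forces $x_i$ to remain above $x_i(t_0)=0$ on a right-neighborhood of $t_0$, contradicting the definition of $t_0$ as an infimum of negativity times.

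The hard part is the rigorous application of the Trujillo MVT, because the Caputo derivative $_{c}D^{\alpha}_{0+}[x_i](t_0)$ is non local and records the entire past of $x_i$, unlike the classical case $\alpha=1$ where positivity of $\dot x_i(t_0)$ immediately yields a local monotonicity conclusion. Two subcases have to be separated. When $f_i(x(t_0))>0$, continuity of $_{c}D^{\alpha}_{0+}[x_i]$ gives strict positivity on a right-interval $[t_0,t_0+\delta]$, and the Trujillo MVT closes the argument directly. When $f_i(x(t_0))=0$, one introduces a perturbation such as $y^{\varepsilon}_i(t):=x_i(t)+\varepsilon\, t^{\alpha}/\Gamma(\alpha+1)$ so that $_{c}D^{\alpha}_{0+}[y^{\varepsilon}_i](t_0)=\varepsilon>0$, reduces to the previous case for $y^{\varepsilon}_i$, and lets $\varepsilon\to 0$. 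The boundary situation $t_0=0$, corresponding to $x_0\in\partial\mathbb{R}_+^m$, is handled identically starting from the initial time. The whole point of the argument is that the classical literature proof becomes valid once the continuity of the Caputo derivative of the solution -- which is the content of Theorem~\ref{regularity} -- is available.
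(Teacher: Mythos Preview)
Your strategy---contradiction at the first exit time, with Theorem~\ref{regularity} supplying the continuity hypotheses that the cited papers left unchecked---is the same as the paper's. The divergence is in \emph{where} the Trujillo mean value theorem is applied. The paper applies it on the \emph{global} interval $[0,t_2]$ (with $a=0$): for $t\le t_2$ one obtains $x_i(t)=x_i(0)+\frac{1}{\Gamma(\alpha)}f_i(x(s))\,t^{\alpha}$ with $s\in[0,t]$, and since the solution has not yet left $\mathbb{R}^m_+$ the right-hand side is declared nonnegative. You instead try to apply the MVT \emph{locally} on a right-neighbourhood $[t_0,t_0+\delta]$.

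That local step contains a genuine gap. Theorem~\ref{mean} is tied to the fixed lower terminal $a$ of the Caputo operator: it asserts $x(t)=x(a)+\frac{1}{\Gamma(\alpha)}\,{}_cD^{\alpha}_{a+}[x](s)\,(t-a)^{\alpha}$ with $s\in[a,t]$. In the present problem $a=0$, so the intermediate point $s$ lies in $[0,t]$, not in $[t_0,t]$. Consequently, knowing that ${}_cD^{\alpha}_{0+}[x_i]=f_i(x(\cdot))>0$ on $[t_0,t_0+\delta]$ gives you no control over the MVT term, because $s$ may fall in $[0,t_0)$, where you have no sign information on $f_i(x(s))$ from $(P)_1$. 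Your perturbation $y^{\varepsilon}_i$ inherits exactly the same defect: strict positivity of ${}_cD^{\alpha}_{0+}[y^{\varepsilon}_i]$ near $t_0$ still does not constrain the global MVT. You have correctly diagnosed the non-locality as ``the hard part'', but neither of your two subcases actually resolves it; you cannot restart the MVT at $t_0$ because the equation involves ${}_cD^{\alpha}_{0+}$, not ${}_cD^{\alpha}_{t_0+}$. The paper avoids this pitfall by never leaving the interval $[0,t_2]$.
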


\begin{proof}
The proof is based on the following result of J.J. Trujillo and al. \cite{trujillo} called the {\it generalized mean value theorem} :

\begin{twr}[Generalized mean value theorem]
\label{mean}
Let $0<\alpha \leq 1$, $x \in C^0$ and $_{c}D^{\alpha}_{a+} [x] \in C^0$. Then, we have
\begin{equation}
x(t)=x(a)+\di\frac{1}{\Gamma (\alpha )} _{c}D^{\alpha}_{a+} [x] (s) (t-a)^{\alpha} ,
\end{equation}
where $a\leq s \leq t$ and for all $t\in [a,b]$.
\end{twr}

Let us consider a solution of the Caputo fractional differential equation $x$. By the regularity result obtained in Theorem \ref{regularity}, we deduce that the assumptions of Theorem \ref{mean} are satisfied.

Assume that there exists $t_1$ such that $x(t_1 )< 0$. We denote by $t_2$ the minimal value of $0< t<t_2$ such that $x(t_2 )<0$ and $x(t) \geq 0$ for $t < t_2$. By the generalized mean value Theorem, we have for $t\leq t_2$, that
\begin{equation}
x(t)=x(0)+\di\frac{1}{\Gamma (\alpha )} f(x(s)) t^{\alpha} ,
\end{equation}
with $x(0) \geq 0$ and $f(x(s))\geq 0$ by assumption $(P)_1$ as $x(s) \geq 0$. We deduce that $x(t_2) \geq 0$ in contradiction with our assumption. This concludes the proof.
\end{proof}

\subsection{A fractional comparison theorem}

As usual with a positivity Theorem, one can deduce sufficient conditions in order to ensure a comparison Theorem. In the classical case, a necessary and sufficient condition is known.\\

A function $f :\R^n \times \R \rightarrow \R^n$ is said to be {\it quasi-monotone} if and only if for all $i=1,\dots ,n$ and $(x,y)\in \R^n$ such that $x_j \geq y_j$, $j=1,\dots ,n$, $j\not= i$ and $x_i =y_i$ we have
\begin{equation}
\label{monotone}
f_i (x,t) \geq f_i (y,t) .
\end{equation}

We have the classical result :

\begin{twr}
The classical differential equation (\ref{frac}) with $\alpha=1$ satisfies a comparison principle if and only if $f$ is quasi-monotone.
\end{twr}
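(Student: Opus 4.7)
The plan is to derive the comparison theorem as a corollary of the classical positivity result (Theorem \ref{posi}) applied to a suitably enlarged system, and then to establish the converse by constructing an explicit violation when quasi-monotonicity fails.

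For the forward direction, assume $f$ is quasi-monotone and let $u,v$ be two solutions of $\dot x = f(x,t)$ with $u(0) \leq v(0)$ componentwise. I would form the augmented system in $\R^{2n}$ with unknown $z=(u,v)$ and then change variables to $(u,w)$ where $w=v-u$, so that the equations become
\begin{equation}
\dot u = f(u,t), \qquad \dot w = f(u+w,t)-f(u,t).
\end{equation}
The desired conclusion $u(t)\leq v(t)$ for all $t\geq 0$ is equivalent to positivity of the $w$-coordinates only. I would then check that the $(P)_1$ condition applies to the $w$-block: if $w_i=0$ and $w_j\geq 0$ for $j\neq i$, then setting $x=u+w$ and $y=u$ gives $x_i=y_i$ and $x_j\geq y_j$ for $j\neq i$, so quasi-monotonicity yields $f_i(x,t)-f_i(y,t)\geq 0$, i.e. the $i$-th component of the right-hand side of the $w$-equation is nonnegative on the face $\{w_i=0,\ w\geq 0\}$. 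Invoking Theorem \ref{posi} (applied to the $w$-block, with $u$ treated as an auxiliary variable entering parametrically) then yields $w(t)\geq 0$ for $t\geq 0$, which is the comparison statement.

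For the converse, I would argue by contraposition. Suppose the quasi-monotone condition fails, so that there exist $x^\star,y^\star \in \R^n$, some index $i$ and some $t_0\geq 0$ with $x^\star_j\geq y^\star_j$ for all $j\neq i$, $x^\star_i=y^\star_i$, and $f_i(x^\star,t_0)<f_i(y^\star,t_0)$. Take solutions with $u(t_0)=y^\star$ and $v(t_0)=x^\star$, so that $u(t_0)\leq v(t_0)$. A valid comparison principle would demand $u_i(t)\leq v_i(t)$ for $t\geq t_0$, yet $u_i(t_0)=v_i(t_0)$ combined with $\dot u_i(t_0)=f_i(y^\star,t_0)>f_i(x^\star,t_0)=\dot v_i(t_0)$ forces $u_i(t)>v_i(t)$ for $t$ slightly larger than $t_0$, which is the required contradiction.

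The main technical point, and the place I expect to spend the most care, is the forward step: Theorem \ref{posi} is stated for systems whose whole state space is the positive orthant, whereas here only the $w$-coordinates of the augmented system are constrained to be nonnegative. Justifying this partial positivity either by a direct rereading of the proof of Theorem \ref{posi} (in which the $(P)_1$ condition is only ever tested on faces $\{x_i=0\}$) or by an explicit penalization argument (perturbing $\dot w$ by $\varepsilon(1,\dots,1)$, obtaining strict inequality, and letting $\varepsilon\to 0^+$) is the only non-routine item; once that book-keeping is settled, the remainder of the argument is standard.
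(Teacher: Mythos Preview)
The paper does not actually prove this theorem; it is quoted as a known classical result and then used. Your argument is correct and is the standard one: the forward direction---reducing order preservation for two solutions to positivity of their difference $w$, and checking that quasi-monotonicity of $f$ is exactly condition $(P)_1$ for the $w$-equation---is precisely the reasoning the paper reproduces in the paragraph immediately following this theorem to obtain the \emph{fractional} comparison theorem. Your converse via contraposition is also the usual argument and is fine.

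The technical concern you raise is well spotted, and it is worth noting that the paper's own argument for the fractional version carries the same imprecision: the right-hand side $g(w,t)=f(y+w,t)-f(y,t)$ depends on the particular solution $y$, so one is really invoking the positivity theorem for a non-autonomous equation (equivalently, for the $w$-block of an enlarged system) rather than for a system of the exact form stated in Theorem~\ref{posi}. Either of your proposed remedies---rereading the Nagumo-type tangency proof so that only the faces $\{w_i=0\}$ are tested, or the $\varepsilon$-perturbation argument followed by $\varepsilon\to 0^+$---is standard and closes the gap.
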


We then consider $x$ and $y$ two solution of a fractional Caputo differential equation. Let us assume that $x (a) \geq y(a)$. We denote by $w =x-y$ then
\begin{equation}
\label{frac-modif}
_{c}D^{\alpha}_{a+} [w]=f(x,t)-f(y,t) ,
\end{equation}
with $w(a)=x(a)-y(a) \geq 0$. In order for the fractional system (\ref{frac}) to be order preserving, we must ensure that the fractional system (\ref{frac-modif}) preserves the positivity, i.e. that if $w(a) \geq 0$ then $w(t) \geq 0$ for $t>0$. By Theorem \ref{thm-positivity}, this is the case if the function $g(w,t)=f(x,t)-f(y,t)$ satisfies conditions $(P)_1$. Then, for all $i=1,\dots ,n$ and $w \geq 0$ such that $w_i =0$, we must have $g(w,t) \geq 0$. As $w=x-y$, this condition reduces exactly to the quasi-monotonicity of $f$. We then have proved :

\begin{twr}[Fractional comparison theorem]
Let $f$ be locally Lipschitz and $f$ quasi-monotone, then the fractional system (\ref{frac}) satisfies the comparison principle.
\end{twr}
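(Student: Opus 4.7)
The plan is to reduce the comparison principle to the positivity theorem (Theorem \ref{thm-positivity}) applied to the difference of two solutions. Let $x$ and $y$ be two solutions of the system with $x(a) \geq y(a)$ componentwise, and set $w := x - y$. By linearity of the Caputo derivative,
\begin{equation*}
_{c}D^{\alpha}_{a+}[w] = f(x) - f(y) =: h(w,t),
\end{equation*}
where $h(w,t) := f(w + y(t)) - f(y(t))$, viewing $y$ as a fixed continuous function (its continuity is guaranteed by Theorem \ref{regularity}). The initial condition reads $w(a) \geq 0$, and the aim is to deduce $w(t) \geq 0$ for every $t \geq a$.

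The crux is to verify that $h$ satisfies the componentwise positivity condition $(P)_1$ in its first argument at each fixed $t$. Given $w \geq 0$ with $w_i = 0$, set $\tilde{x} := w + y(t)$; then $\tilde{x}_j = w_j + y_j(t) \geq y_j(t)$ for every $j$, with equality in coordinate $i$ because $w_i = 0$. Quasi-monotonicity of $f$, namely condition (\ref{monotone}) applied to the pair $(\tilde{x}, y(t))$, then gives $f_i(\tilde{x}) \geq f_i(y(t))$, that is, $h_i(w,t) \geq 0$, which is precisely $(P)_1$.

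With $(P)_1$ verified for $h$, it remains to invoke Theorem \ref{thm-positivity} for the Caputo equation driven by $h$. Strictly speaking, that theorem is stated for an autonomous right-hand side, whereas $h$ inherits explicit $t$-dependence from the trajectory $y(\cdot)$; however, its proof uses only the regularity statement of Theorem \ref{regularity} and a componentwise application of the generalized mean value theorem (Theorem \ref{mean}) at a single intermediate point $s$, both of which are insensitive to autonomy of the right-hand side. Applying the same argument yields $w(t) \geq 0$ for all $t \geq a$, i.e.\ $x(t) \geq y(t)$.

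The main obstacle is therefore the mild extension of the positivity theorem to the non-autonomous setting: one must re-run its proof with $h(w,t)$ in place of $f(x)$ and check that local Lipschitz continuity of $h$ in $w$ together with joint continuity in $(w,t)$ is enough to apply Theorems \ref{regularity} and \ref{mean}. Since the positivity condition on $h$ holds pointwise in $t$ and the mean value argument only evaluates the vector field at one intermediate time $s$, no new idea is required beyond what already appears in the proof of Theorem \ref{thm-positivity}.
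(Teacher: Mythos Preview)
Your proof is correct and follows essentially the same route as the paper: define $w=x-y$, observe that it satisfies a Caputo equation with right-hand side $g(w,t)=f(x)-f(y)$, check that quasi-monotonicity of $f$ is exactly condition $(P)_1$ for $g$, and invoke the positivity theorem. You are in fact more careful than the paper in explicitly flagging and handling the non-autonomous extension of Theorem~\ref{thm-positivity}, which the paper applies without comment.
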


The interesting fact is that the conditions ensuring the positivity or the comparison principle are similar in the fractional and classical setting.

\subsection{A class of fractional differential equations}
\label{class}

In this Section, we introduce a class of fractional differential equations which is in fact generic when one is dealing with systems preserving positivity. Precisely, we have :

\begin{twr}[Representation theorem]
\label{representation}
Caputo fractional differential equations of the form (\ref{eq}) with $f\in C^1$ and satisfying condition $(P)_1$ can be written as
\begin{equation}
\label{frac}
\di _{c}D^{\alpha}_{a+} [x] =f_+(x) -xf_- (x),
\end{equation}
where $f_+$ and $f_-$ are two applications from $\R^m$ to $\R^m$ which are positive when $x\geq 0$ and Lipschitz continuous.
\end{twr}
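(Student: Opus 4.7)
The plan is to carry out the decomposition componentwise and read off $f_+$ and $f_-$ from a first-order Taylor-type expansion along the $i$-th axis. Fix $i \in \{1,\dots,m\}$ and write $\tilde{x}^{(i)} = (x_1,\dots,x_{i-1},0,x_{i+1},\dots,x_m)$. Since $f_i \in C^1$, the fundamental theorem of calculus gives
\begin{equation*}
f_i(x) = f_i(\tilde{x}^{(i)}) + x_i \int_0^1 \frac{\partial f_i}{\partial x_i}\bigl(x_1,\dots,x_{i-1},t x_i,x_{i+1},\dots,x_m\bigr)\,dt.
\end{equation*}
Denote the first summand by $g_i(x)$ and the integral by $h_i(x)$, so that $f_i(x) = g_i(x) + x_i h_i(x)$. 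Condition $(P)_1$ applied at the point $\tilde{x}^{(i)} \in \R_+^m$, whose $i$-th coordinate vanishes, yields $g_i(x) \geq 0$ whenever $x \in \R_+^m$; this is the entire geometric content of the positivity hypothesis.

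To repair the possibly wrong sign of the remaining piece $x_i h_i(x)$, split $h_i = h_i^+ - h_i^-$ into its positive and negative parts and set
\begin{equation*}
(f_+)_i(x) := g_i(x) + x_i\, h_i^+(x), \qquad (f_-)_i(x) := h_i^-(x).
\end{equation*}
By construction $(f_+)_i(x) - x_i (f_-)_i(x) = g_i(x) + x_i\bigl(h_i^+(x) - h_i^-(x)\bigr) = f_i(x)$, and on the positive orthant all the ingredients $g_i(x)$, $x_i$, $h_i^\pm(x)$ are nonnegative, so both $(f_+)_i$ and $(f_-)_i$ are themselves nonnegative there. Collecting the components for $i=1,\dots,m$ produces the required pair of vector fields $f_\pm : \R^m \to \R^m$ and rewrites the equation in the announced form (\ref{frac}).

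The main obstacle is the Lipschitz regularity claim. Because $t \mapsto t^\pm$ is $1$-Lipschitz, the issue reduces to the regularity of $g_i$ and $h_i$. The map $g_i$ is $f_i$ precomposed with a coordinate projection, so it inherits whatever regularity $f$ has. The integrand defining $h_i$, however, involves $\partial_i f_i$, which is merely continuous under the bare assumption $f \in C^1$. To obtain locally Lipschitz $h_i$ — and hence locally Lipschitz $f_\pm$ — the natural reading is that $f$ is implicitly assumed $C^{1,1}_{\mathrm{loc}}$, a property satisfied by every polynomial and rational right-hand side considered later in the paper. Once this is granted, a standard differentiation-under-the-integral estimate produces $|h_i(x) - h_i(y)| \leq L\|x-y\|$ on compact subsets, and local Lipschitz continuity of $f_\pm$ follows at once, which in turn feeds into the existence, uniqueness and positivity results invoked in the sequel.
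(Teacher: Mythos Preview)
Your argument is essentially identical to the paper's: the same fundamental-theorem-of-calculus expansion along the $i$-th axis, the same splitting of the integral factor into its positive and negative parts, and the same appeal to condition $(P)_1$ at $\tilde{x}^{(i)}$ to secure nonnegativity of the leading term. The only substantive difference is your treatment of the Lipschitz claim: the paper simply asserts that ``$\max$ and $\min$ preserve Lipschitz continuity'' and that $f\in C^1$ is enough to conclude $f_\pm$ are Lipschitz, without addressing the regularity of the integral $h_i$ itself. You have correctly flagged that $f\in C^1$ alone does not make $h_i$ locally Lipschitz (only continuous), and that something like $f\in C^{1,1}_{\mathrm{loc}}$ is what is really being used; your version is the more careful one on this point.
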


The previous Theorem implies that we can restrict our attention to fractional differential equations of the form (\ref{frac}) when one is dealing with a Caputo fractional embedding of a classical equation satisfying the positivity property. Indeed, in this case, the condition $(P)_1$ is a necessary and sufficient condition for positivity so that the associated fractional equation directly satisfies the conditions of Theorem \ref{representation}. \\

When $\alpha=1$ and $m=2$, equations of the form (\ref{frac}) correspond to models studied by Dimitrov and Kojouharov \cite{dk} which contains classical examples like the {\it general Rozenzweig-MacArthur predator -prey model} (see \cite{bc},p.182). When $0<\alpha  <1$, this contains the fractional predator-prey model studied by Javidi and Nyamoradi in \cite{JN}.

\begin{proof}
As $f=(f_1 ,\dots ,f_n )$ is $C^1$, we can represent each $f_i$, $i=1,\dots ,n$ as
\begin{equation}
f_i (x)=f_i (x_1 ,\dots , 0 ,\dots ,x_n ) +x_i \di\int_0^1 \di\frac{\partial f_i}{\partial x_i} (x_1 ,\dots , sx_i ,\dots ,x_n )\, ds ,
\end{equation}
that is
\begin{equation}
f_i (x)=f_i (x_1 ,\dots , 0 ,\dots ,x_n ) +x_i g_i (x) ,
\end{equation}
with a bounded function $g_i :\R^n \rightarrow \R$.

Let us denote by $g_+ (x)=\max (g(x),0)$ and $g_- (x)=-\min (g(x),0)$ so that $g(x)=g_+ (x)-g_- (x)$. Using this decomposition, we obtain
\begin{equation}
f_i (x)=\left ( f_i (x_1 ,\dots , 0 ,\dots ,x_n ) + x_i g_{+,i} (x) \right ) -x_i g_{-,i} (x) .
\end{equation}
Denoting by $f_{+,i} (x) =f_i (x_1 ,\dots , 0 ,\dots ,x_n ) + x_i g_{+,i} (x)$ and $f_{-,i} (x) =g_{-,i} (x)$, we obtain the required form of the equation. Moreover, as $f$ satisfies condition $(P)_1$, we have that
$f_i (x_1 ,\dots , 0 ,\dots ,x_n ) \geq 0$ for $x\geq 0$ and as a consequence $f_{+,i} (x)$ is a positive function for $x\geq 0$.

The $\max$ and $\min$ functions preserve Lipschitz continuity. As $f$ is assume to be $C^1$, we deduce that $f_+$ and $f_-$ are at least Lipschitz continuous functions.
\end{proof}

\section{The continuous persistence problem}

All the previous results can be summarized in a unique theorem about persistence of some properties of a classical differential equation under a Caputo fractional embedding. Precisely, we have :

\begin{twr}[Fractional persistence theorem] \label{FPT}
The Caputo fractional embedding of an ordinary differential equation preserves the following properties :
\begin{itemize}
\item Exactly the set of of equilibrium points.

\item The linear stability of the equilibrium points.

\item Positivity.

\item Order preserving.
\end{itemize}
\end{twr}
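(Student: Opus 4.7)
The plan is to assemble the four bullets directly from the named results established earlier in the paper; the statement is essentially a synthesis theorem, so the proof amounts to observing that each classical property is equivalent to a purely structural condition on $f$ alone, and that this condition (under mild regularity) transports verbatim to the Caputo setting.

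First, for preservation of the equilibrium set, I would simply cite Lemma \ref{persistence-equilibrium}: the characterization $f(x_0)=0$ does not see the left-hand side of the equation, so the replacement $\frac{d}{dt} \mapsto {}_{c}D^{\alpha}_{a+}$ leaves the zero locus unchanged. Second, for linear stability, I would invoke Theorem \ref{persistence-stability}, which rests on Matignon's criterion $|\arg \lambda_i | > \alpha \pi/2$; since $0 < \alpha \leq 1$, the classical condition $|\arg \lambda_i| > \pi/2$ (negative real parts of the eigenvalues of $Df(x_0)$) implies the fractional one, and conversely strict instability of an eigenvalue in the open right half-plane is preserved in both directions, so the linear-stability nature of each equilibrium is passed through the embedding. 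Third, for positivity, I would start from Theorem \ref{posi}: the classical system preserves positivity if and only if $f$ satisfies condition $(P)_1$; assuming $f$ is locally Lipschitz (guaranteed by the $C^1$ hypothesis of the Representation Theorem \ref{representation}), Theorem \ref{thm-positivity} then transports $(P)_1$ to the Caputo equation. Fourth, for order preserving, the classical comparison principle is equivalent to quasi-monotonicity of $f$, and the Fractional Comparison Theorem immediately yields the corresponding property for the embedded equation.

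The real technical obstacle has already been absorbed upstream and lies in the positivity bullet: the passage from $(P)_1$ to the fractional conclusion depends delicately on the generalized mean value theorem of Trujillo et al., whose applicability in turn requires both $x$ and ${}_{c}D^{\alpha}_{a+}[x]$ to be continuous. This is precisely the gap left open in the earlier literature, and it is exactly the content of the regularity result Theorem \ref{regularity}. At the level of Theorem \ref{FPT} itself the only point to be careful about is to state the standing regularity hypothesis ($f$ locally Lipschitz, or $C^1$ if one wants simultaneously to invoke Theorem \ref{representation}) under which all four items apply together; once this is done, the proof reduces to four one-line invocations of the results proved in the preceding sections.
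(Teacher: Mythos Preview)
Your proposal is correct and matches the paper's approach exactly: Theorem \ref{FPT} is presented in the paper as a summary statement with no separate proof, relying implicitly on Lemma \ref{persistence-equilibrium}, Theorem \ref{persistence-stability}, Theorem \ref{thm-positivity}, and the Fractional Comparison Theorem, precisely as you outline. Your additional remark about the role of the regularity result (Theorem \ref{regularity}) in closing the gap for the positivity bullet is accurate and reflects the paper's own emphasis.
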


As a consequence, studying some fractional version of a well-known differential equations, one can reduce the number of computations to minimal using already known properties of the classical system.



\newpage
\part{Discrete fractional persistence problems - a NSFD positivity preserving scheme}
\label{scheme}
\setcounter{section}{0}

In this Part, we introduce a non standard finite difference scheme for Caputo fractional differential equations of the form
\begin{equation}
\di _{c}D^{\alpha}_{a+} [x] =f_+(x) -xf_- (x),
\end{equation}
where $f_+$ and $f_-$ are two applications from $\R^n$ to $\R^n$ which are positive when $x\geq 0$ and Lipschitz continuous. As proved in Theorem \ref{representation}, this class of fractional systems is generic when dealing with systems preserving positivity.

\section{Grünwald-Letnikov (GL) fractional derivatives and the GL scheme}

\subsection{Discrete Grünwald-Letnikov fractional derivative}

Let $\alpha >0$, we denote by $(\alpha )_r$ the quantities defined by
\begin{equation}
\label{lam}
\alpha_0^{[\alpha]}=1,\;\;\; \alpha_j^{[\alpha]}=\prod_{k=0}^{j-1}\frac{k-\alpha}{k+1} .
\end{equation}

The Grünwald-Letnikov fractional derivative is defined as :

\begin{equation}
\alpha^{[\alpha]} _0 =0,\ \ \alpha^{[\alpha]}_r =\alpha \cdot (\alpha -1) \cdots (\alpha -r +1)/r!,\;\;\; r\in \mathbb{N}^* ,
\end{equation}
where $\mathbb{N}^*$ is a set of all natural numbers larger than zero. The Grünwald-Letnikov fractional derivative is defined as :

\begin{df}
Let $x : [a,b] \rightarrow \mathbb{R}^m$. The left fractional derivative of Grünwald-Letnikov with inferior limit $a$ of order $\alpha >0$ of $x$ is defined by
\begin{equation}
\forall t\in ]a,b],\  {}_{GL} D^{\alpha}_{a+} [x] (t)= \lim_{h\rightarrow 0^+ , rh =t-a} \frac{(-1)^r}{r!} \alpha^{[\alpha]}_r x(t-rh) ,
\end{equation}
provided that the right-hand side is well defined.
\end{df}

The Grünwald-Letnikov and Riemann-Liouville derivatives coincide for $x\in C^{[\alpha ] +1} ([a,b] ,\mathbb{R}^m )$ where $[\alpha ]$ denotes the floor of $\alpha >0$. This leads to the following discrete approximation of the Caputo derivative :

\begin{df}
Let $x\in C(\mathbb{T} ,\mathbb{R}^m )$, when $\mathbb{T}$ is a fixed time scale, i.e. subinterval of $\mathbb{R}$. The left discrete fractional derivative of Caputo-Grűnwald-Letnikov of inferior limit $a$ of order $0<\alpha <1$ is defined by
\begin{equation}
\forall k+1,\dots ,N-1,\ {}_c \Delta_{a+} [x] (t_k)= \frac{1}{h^{\alpha}} \sum_{r=0}^k
\frac{(-1)^r}{r!} \alpha^{[\alpha]}_r [x-x(a)](t_{k-r} ) .
\end{equation}
\end{df}

In order to prove the convergence of our numerical scheme, we need the following well known result about the order of approximation of the discrete Caputo-Grünwald-Letnikov derivative :

\begin{twr}
Let $x$ be a $C^1$ function and $0<\alpha <1$. We have the following inequality
\begin{equation}
\label{est}
_{c}D_{0,t}^{\alpha}x(t)=\frac{1}{h^{\alpha}}\sum_{k=0}^n \alpha_{k}^{[\alpha]}(x(t-kh)-x(0))+\mathcal{O}(h).
\end{equation}
\end{twr}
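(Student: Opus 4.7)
The plan is to recognise the right-hand side as the classical backward Grünwald--Letnikov finite difference operator applied to the function $y(s):=x(s)-x(0)$, and then invoke the well-known first-order convergence of this operator toward the Riemann--Liouville derivative. The first reduction step is as follows. For $\alpha\in(0,1)$ and $x\in C^{1}$, the Caputo derivative is unchanged by subtraction of the constant $x(0)$, so
\[
{}_{c}D^{\alpha}_{0+}[x](t)={}_{c}D^{\alpha}_{0+}[y](t).
\]
Because $y(0)=0$ and $y\in AC$, the Caputo and Riemann--Liouville derivatives of $y$ coincide, and an integration by parts gives the representation
\[
{}_{c}D^{\alpha}_{0+}[y](t)=\frac{1}{\Gamma(1-\alpha)}\int_{0}^{t}\frac{y'(s)}{(t-s)^{\alpha}}\,ds,
\]
which is well defined since $y'=x'$ is continuous and the kernel is integrable.

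The second step is to rewrite the discrete sum in the same spirit. Setting $S_{h}(t):=h^{-\alpha}\sum_{k=0}^{n}\alpha_{k}^{[\alpha]}y(t-kh)$ with $nh=t$, I would perform an Abel (summation-by-parts) transformation using the partial sums
\[
A_{j}^{[\alpha]}=\sum_{k=0}^{j}\alpha_{k}^{[\alpha]},
\]
which have the explicit closed form $A_{j}^{[\alpha]}=\binom{j-\alpha}{j}$ and the asymptotic behaviour $A_{j}^{[\alpha]}\sim j^{-\alpha}/\Gamma(1-\alpha)$. This rewrites $S_{h}(t)$ as a Riemann-type sum over the nodes $s_{k}=t-kh$ of the discrete derivatives $(y(s_{k})-y(s_{k+1}))/h$ weighted by $h\cdot A_{k}^{[\alpha]}/h^{\alpha}$, i.e.\ precisely a quadrature of $\Gamma(1-\alpha)^{-1}\int_{0}^{t}y'(s)(t-s)^{-\alpha}ds$.

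The third step is the error estimate. Using $y\in C^{1}$ I would expand $y(s_{k})-y(s_{k+1})=hy'(s_{k})+O(h^{2})$ and replace $A_{k}^{[\alpha]}/h^{\alpha}$ by its leading asymptotic $(kh)^{-\alpha}/\Gamma(1-\alpha)$, controlling the remainder termwise. Away from the singularity the contribution is a standard left-rectangle rule for a $C^{1}$ integrand against a bounded weight and gives $O(h)$. Near the singularity at $s=t$, the kernel $(t-s)^{-\alpha}$ is integrable and the hypothesis $y(0)=0$ together with the $C^{1}$ bound on $y'$ yield that both the analytic tail $\int_{t-h}^{t}(t-s)^{-\alpha}y'(s)\,ds$ and its discrete counterpart $h\cdot \alpha_{0}^{[\alpha]}y'(t)/h^{\alpha}=h^{1-\alpha}y'(t)$ are of size $O(h^{1-\alpha})$, and their difference is of the required $O(h)$ after using the $C^{1}$ modulus of continuity of $y'$.

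The main obstacle is the interaction between the integrable singularity of the kernel and the $h^{-\alpha}$ prefactor in $S_{h}(t)$: a naive Riemann-sum bound would only give $O(h^{1-\alpha})$. The $O(h)$ rate comes from matching the discrete Grünwald weights $A_{k}^{[\alpha]}/h^{\alpha}$ to the continuous weight $(kh)^{-\alpha}/\Gamma(1-\alpha)$ up to order $O(k^{-\alpha-1})$, a correction that, when summed against $y'(s_{k})$ and telescoped using $y(0)=0$, yields the improvement. Keeping track of this matching uniformly in $k$ and $n$ is the delicate point; all other ingredients are standard properties of the binomial coefficients $\alpha_{k}^{[\alpha]}$ together with the $C^{1}$ regularity of $x$.
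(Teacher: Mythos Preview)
The paper does not give its own proof of this estimate: it is quoted as a ``well known result'' on the order of the discrete Caputo--Gr\"unwald--Letnikov approximation, with the standard references being Lubich, Podlubny, and Scherer et al.\ in the bibliography. Your outline---subtracting $x(0)$ so that Caputo and Riemann--Liouville coincide on $y$, performing an Abel summation on the GL weights to exhibit the sum as a quadrature of $\Gamma(1-\alpha)^{-1}\int_{0}^{t}y'(s)(t-s)^{-\alpha}\,ds$, and then matching the discrete weights $A_{k}^{[\alpha]}h^{-\alpha}$ to the kernel $(kh)^{-\alpha}/\Gamma(1-\alpha)$---is precisely the classical argument found in those sources, so there is nothing to compare against in the paper itself.

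One caution on your third step. Near the singularity you write that the analytic tail $\int_{t-h}^{t}(t-s)^{-\alpha}y'(s)\,ds$ and its discrete counterpart are each $O(h^{1-\alpha})$ and that ``their difference is of the required $O(h)$ after using the $C^{1}$ modulus of continuity of $y'$''. With only $x\in C^{1}$ the modulus of continuity of $y'$ gives $o(1)$, not $O(h)$, so this step as written yields $o(h^{1-\alpha})$ rather than $O(h)$. The sharp first-order rate in the literature is normally stated under $x\in C^{2}$ (or at least H\"older continuity of $x'$ of exponent exceeding $\alpha$); under a bare $C^{1}$ hypothesis one generally obtains only $o(h^{1-\alpha})$ or $O(h^{1-\alpha})$-type bounds. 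This is a looseness already present in the paper's statement of the theorem, not a defect specific to your argument, but you should be aware that closing the gap to a genuine $O(h)$ requires slightly more regularity than the hypothesis provides.
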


As a consequence the discrete Caputo-Grünwald-Letnikov fractional derivative is an approximation of order 1 of the Caputo fractional derivative.

\subsection{The Grünwald-Letnikov scheme}

Let $N \in \N^*$, and $X=(x_1 ,\dots ,x_n )$. We denote by ${}_c \Delta_{a+} [X]$ the quantity defined by
\begin{equation}
{}_c \Delta_{a+} [X]_i = \di\frac{1}{h^{\alpha}} \di\sum_{r=0}^i
\frac{(-1)^r}{r!} \alpha^{[\alpha]}_r [x_{i-r}-x_0] ,\ \ \ i=0,\dots ,N-1.
\end{equation}

The {\it Grünwald-Letnikov} scheme is then defined by
\begin{equation}
{}_c \Delta_{a+} [X] = f(X) ,
\end{equation}
meaning that for all $i=0,\dots ,N-1$, we have ${}_c \Delta_{a+} [X]_i = f(X_i )$.\\

For $\alpha =1$ we recover the classical Euler scheme.

\section{Classical problems related to the Grünwald-Letnikov scheme}

In this Section we illustrate some problems which are related to the use of the Grünwald-Letnikov numerical scheme. This scheme is currently used in many applications about population dynamics or medicine like epidemiology. In all these applications, most of the variables must remains positive and sometimes bounded in some domains. Moreover, the existence of equilibrium points as well as the stability of these points is of course essential for the understanding of the model. The Grünwald-Letnikov scheme does not respect most of them and one must be very careful by choosing the time step of integration in order to be sure that we observe the correct behavior. Here, we give several examples of these problems for which our scheme will provide a better answer.\\

We study a class of toy model given by
\begin{equation} \label{exmodel}
\begin{array}{lll}
{}_c D^{\alpha} x  & = & bx(1-x)-a\frac{xy}{1+x}, \\
{}_c D^{\alpha} y & = & \frac{xy}{1+x}-cy,
\end{array}
\end{equation}
where $a,b,c$ are real constants.\\

We use two particular sets of values for our simulations :
\begin{itemize}
\item {\bf Model 1} : $a=2,b=1,c=6$.
\item {\bf Model 2} : $a=2,b=1,c=0.2$.
\end{itemize}

Expected dynamical behavior for the above models is presented on Figure 1.

\begin{center}
\begin{figure}[ht]
\centerline{%
    \begin{tabular}{cc}
        \includegraphics[width=0.48\textwidth]{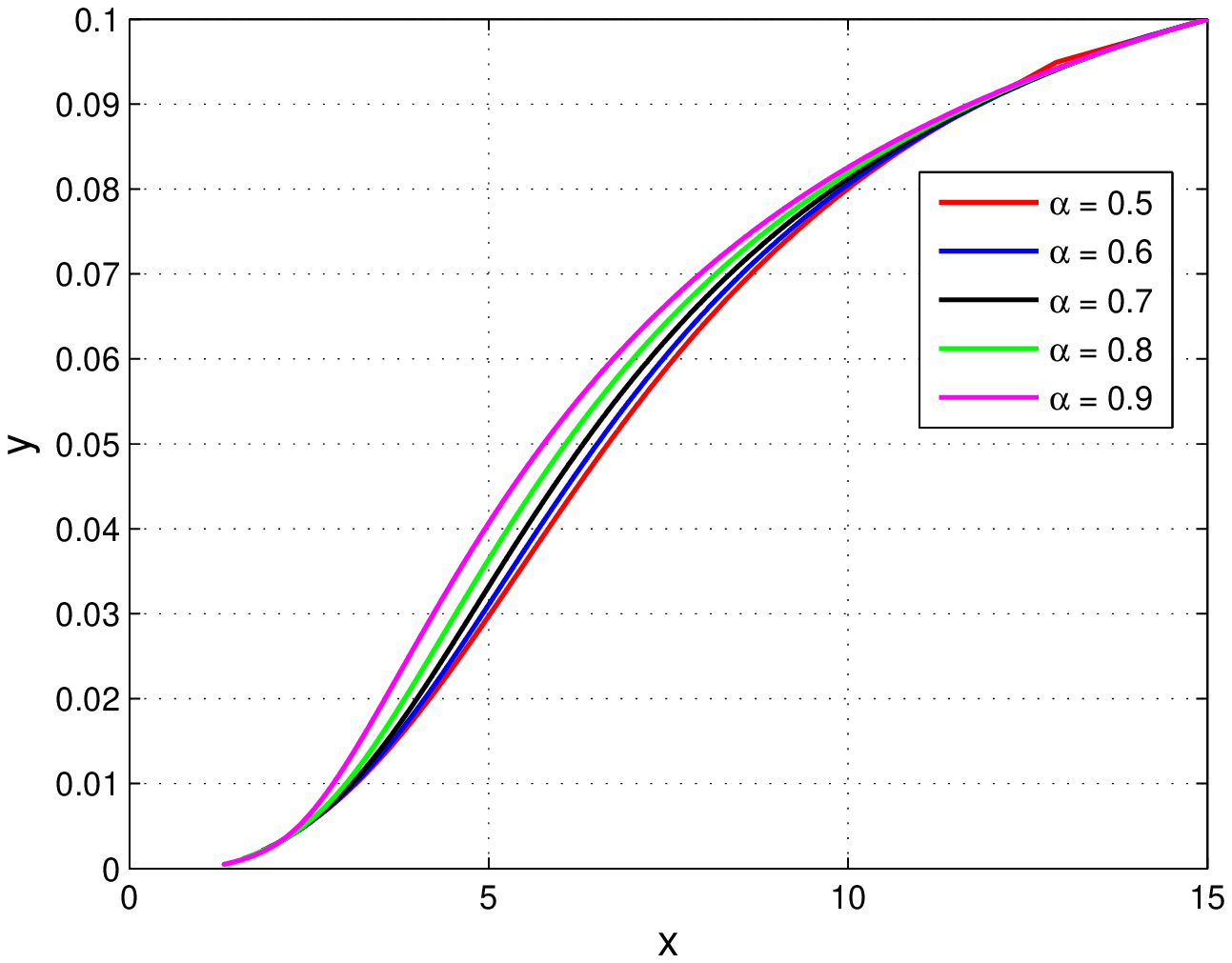} & \includegraphics[width=0.48\textwidth]{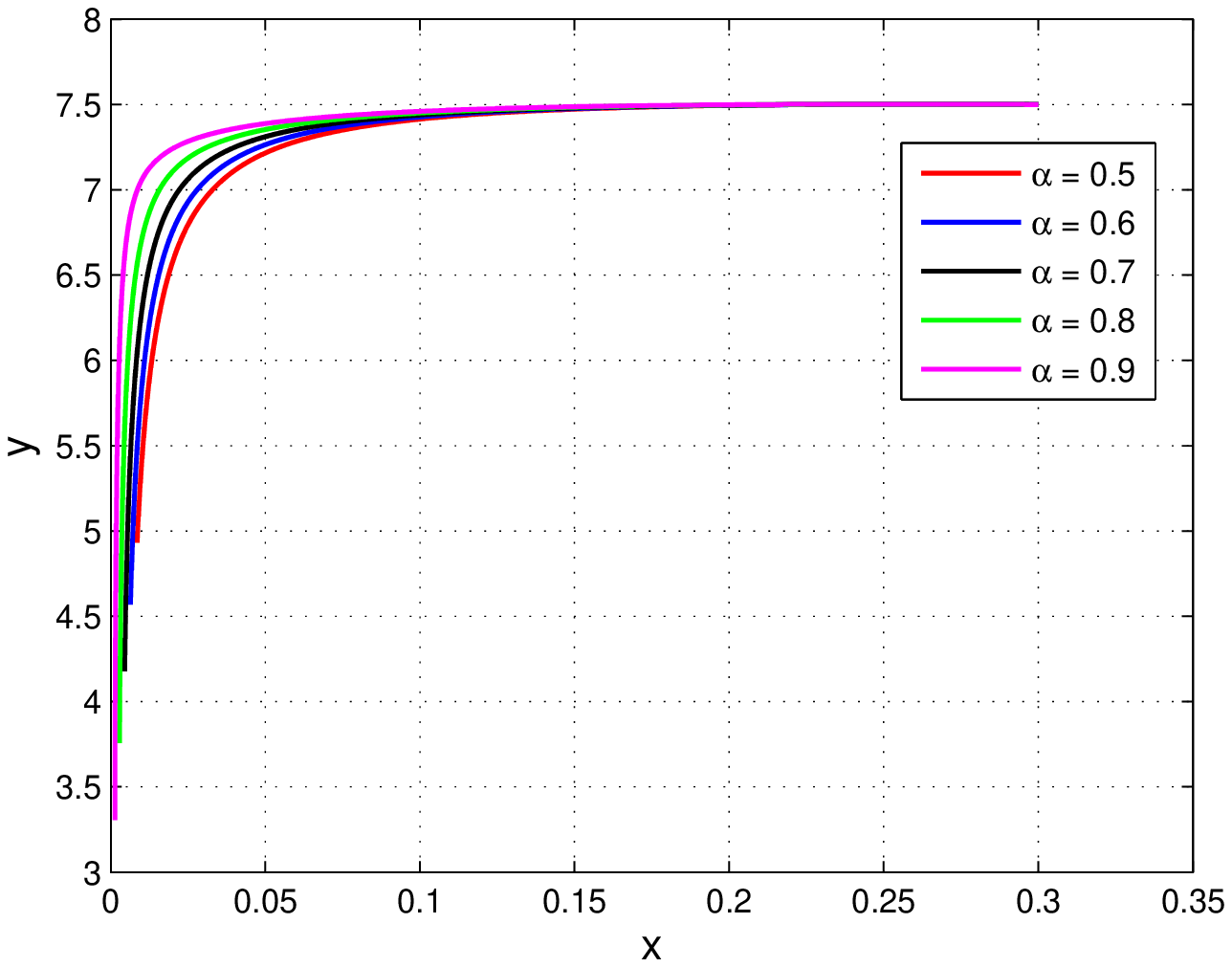}\\
        \footnotesize{(a) Model 1: $x_0 =15$, $y_0 =0.1$} & \footnotesize{(b) Model 2: $x_0 =0.3$, $y_0 =7.5$}\\
    \end{tabular}}
    \caption{Dynamical behavior of solutions obtained using a very small time step for discretization: $h=0.0001$.}
\end{figure}
\end{center}
It can be also interesting to test the numerical method for a fixed $\alpha$, i.e. $\alpha =0.8$ in the following directly looking for a solution of the fractional system, results can be seen on Figure 2.

\begin{center}
\begin{figure}[ht]
\centerline{%
    \begin{tabular}{cc}
        \includegraphics[width=0.48\textwidth]{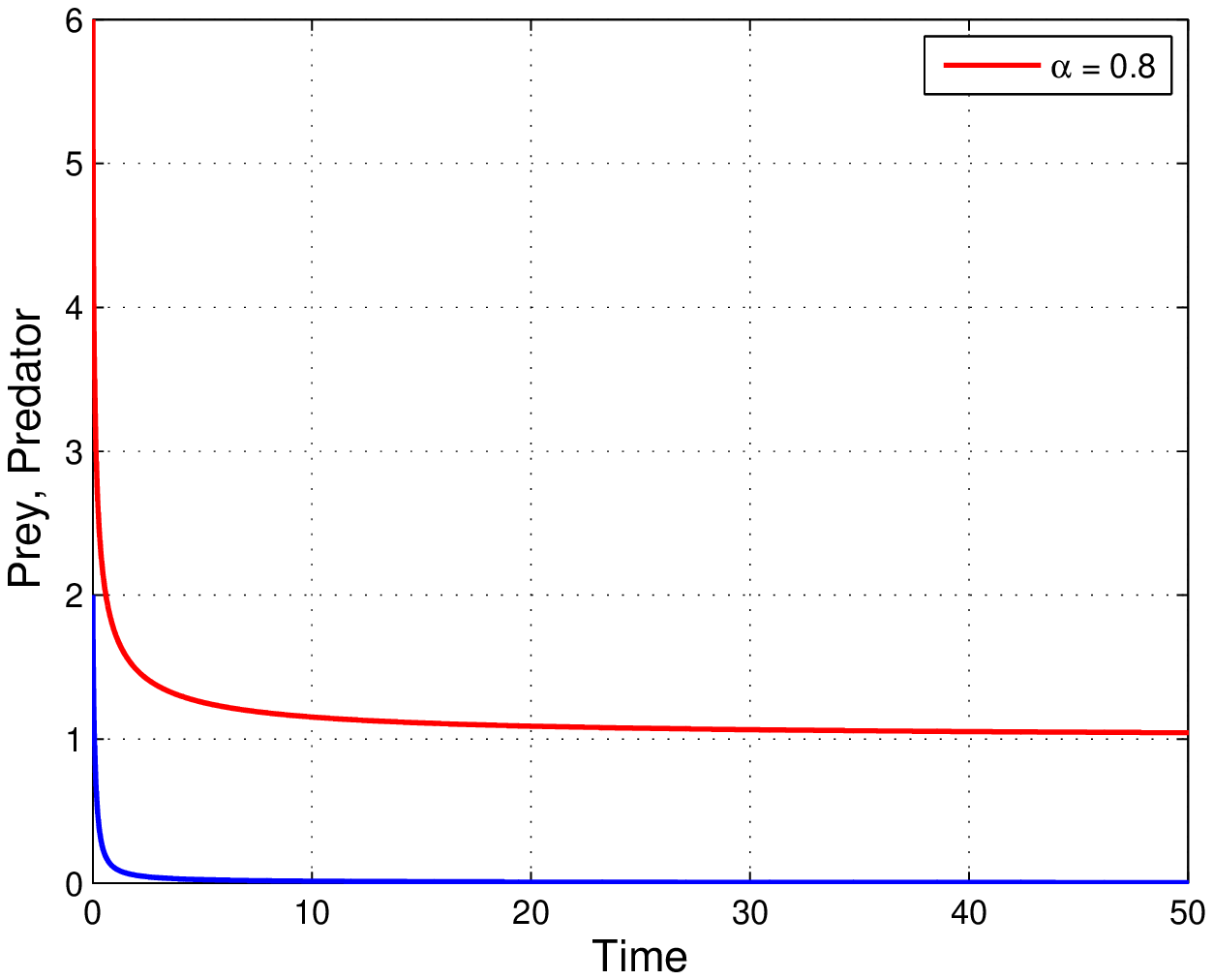} & \includegraphics[width=0.48\textwidth]{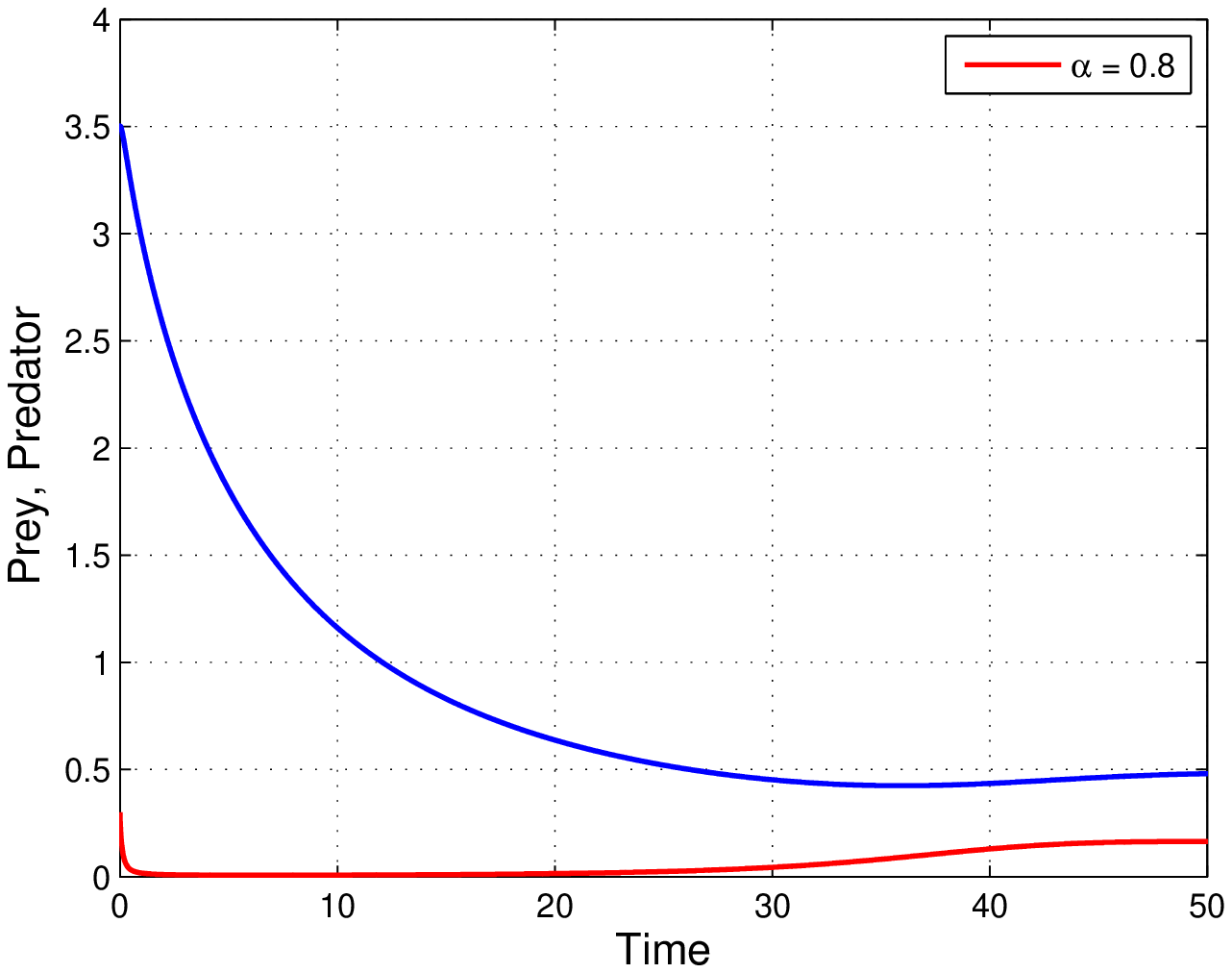}\\
        \footnotesize{(a) Model 1: $x_0 =6$, $y_0 =2$} & \footnotesize{(b) Model 2: $x_0 =0.3$, $y_0 =3.5$}\\
    \end{tabular}}
    \caption{Prey and Predator densities obtained with $\alpha=0.8$ and time step $h=0.001$.}
\end{figure}
\end{center}

It is important to observe that the solutions are remaining positive all over the time.

\subsection{Lost of positivity of solutions}

We take under consideration the Gr\"unwald-Letnikov difference method for model 1 and 2 with appropriate initial conditions. Results can be seen on Figure 3, and we observe that un-physical negative values of densities are produced.

\begin{center}
\begin{figure}[ht]
\centerline{%
    \begin{tabular}{cc}
        \includegraphics[width=0.48\textwidth]{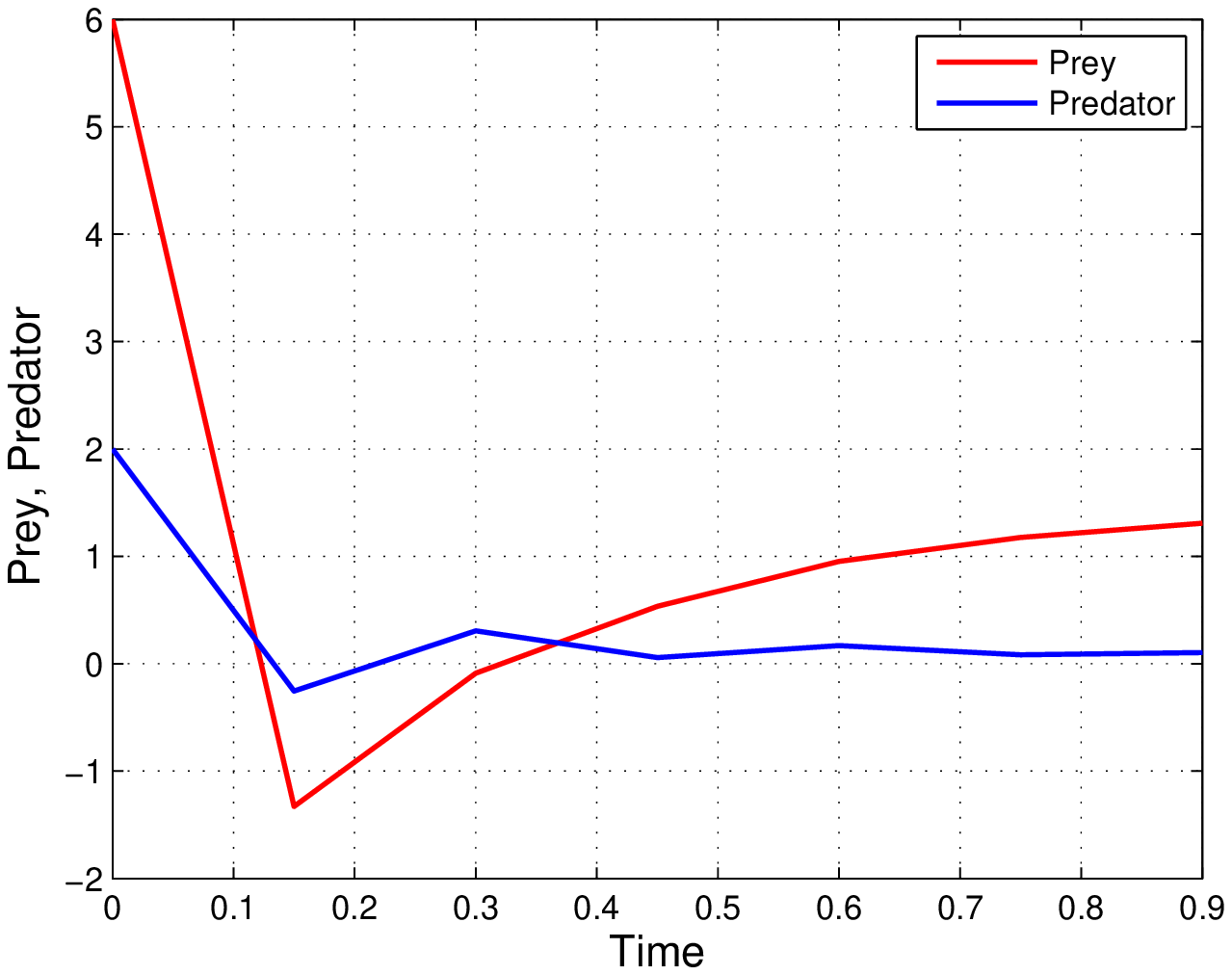} & \includegraphics[width=0.48\textwidth]{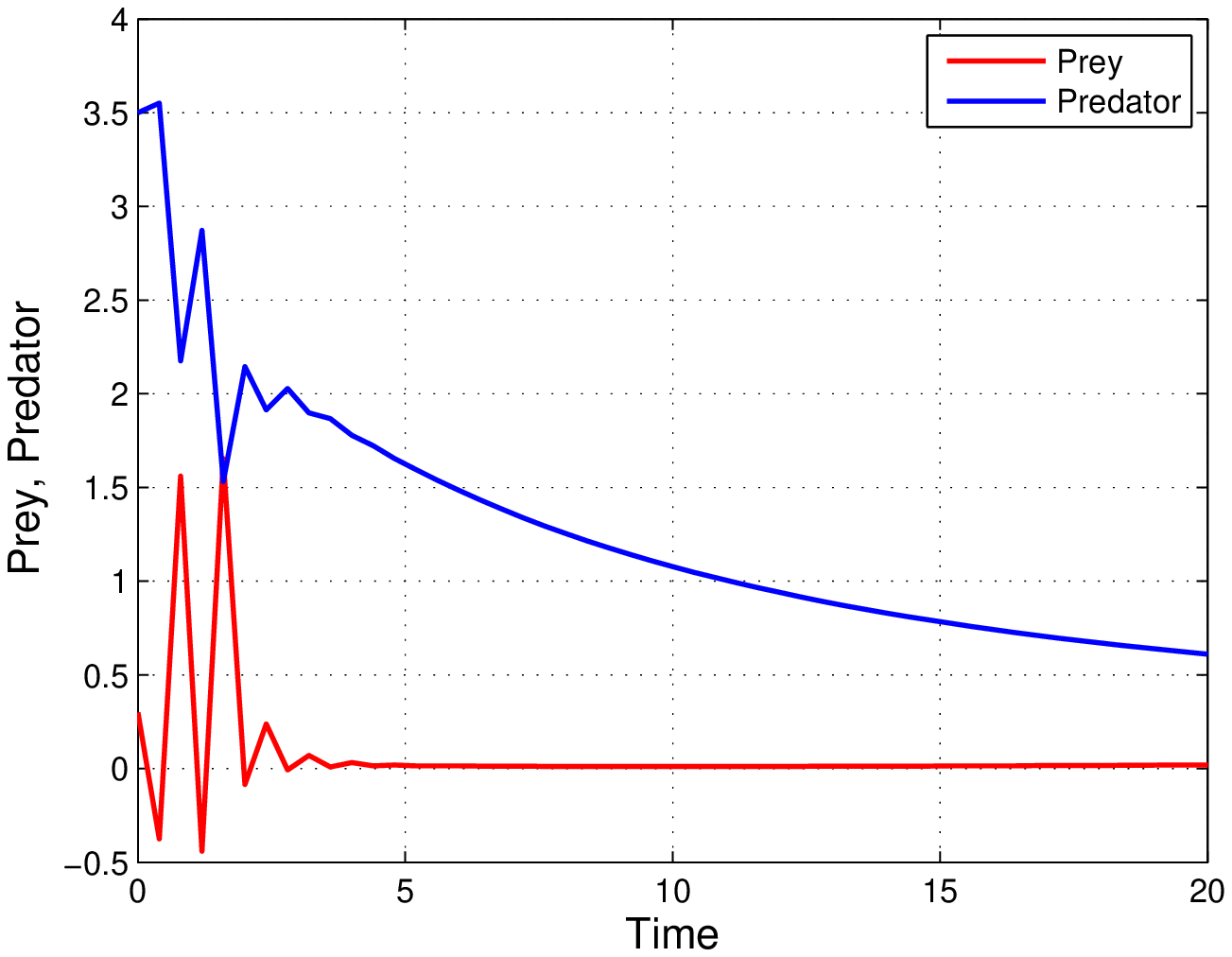}\\
        \footnotesize{(a) Model 1: $x_0 =6$, $y_0 =2$, $h=0.15$} & \footnotesize{(b) Model 2: $x_0 =0.3$, $y_0 =3.5$, $h=0.4$}\\
    \end{tabular}}
    \caption{Results obtained for Gr\"unwald-Letnikov method with $\alpha=0.8$.}
\end{figure}
\end{center}

\subsection{Lost of stability for equilibrium points}

We analyze numerically model 1 and 2 using Gr\"unwald-Letnikov difference method. As can be seen on the Figure 1, we are waiting for a convergence to the equilibrium point. However, what can be observed on Figure 4, the Gr\"unwald-Letnikov method induces a break of stability for model 1. The analysis of model 2 shows that even for small value of the time increment we obtain a large oscillations which are not existing.

\begin{center}
\begin{figure}[ht]
\centerline{%
    \begin{tabular}{cc}
        \includegraphics[width=0.48\textwidth]{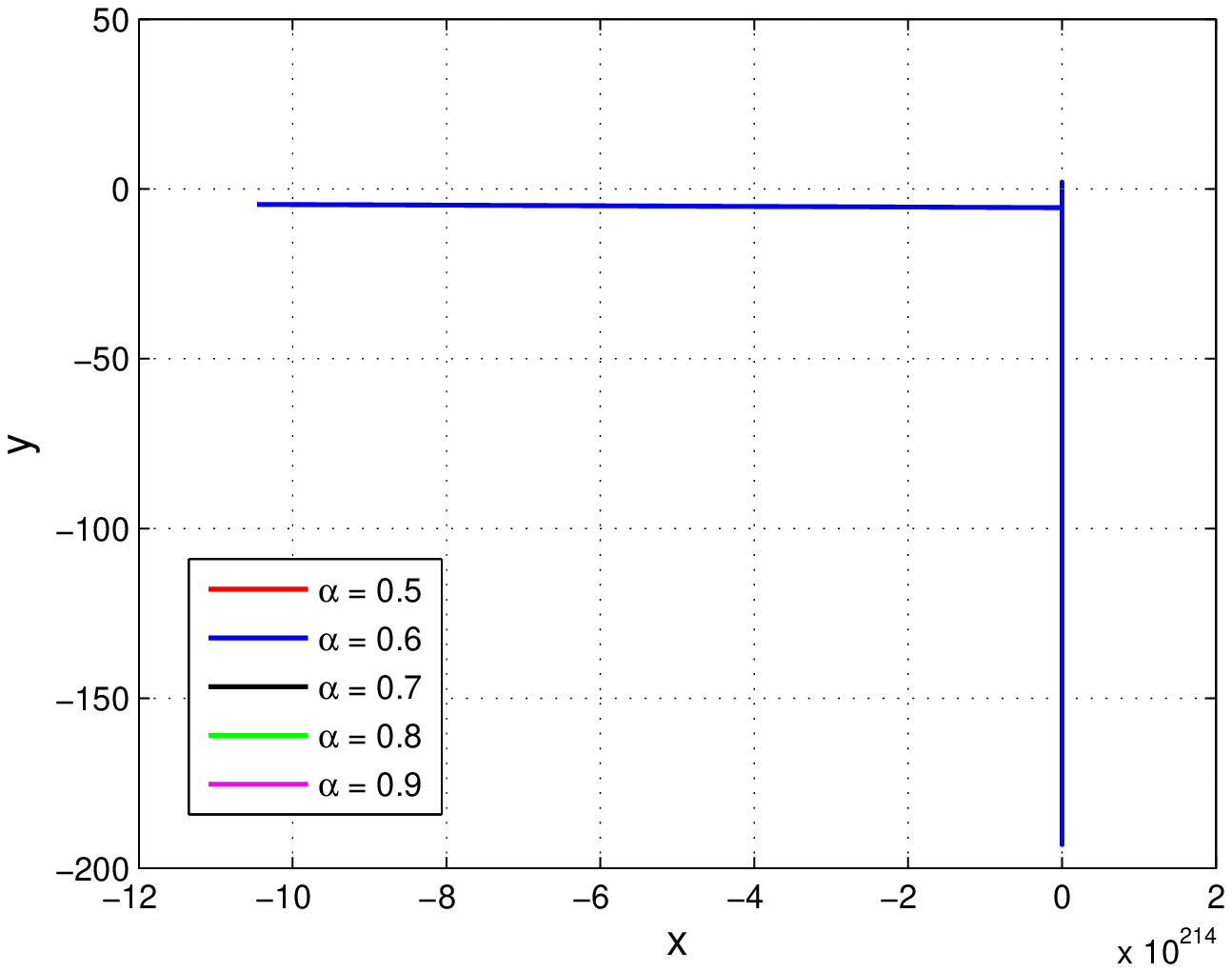} & \includegraphics[width=0.48\textwidth]{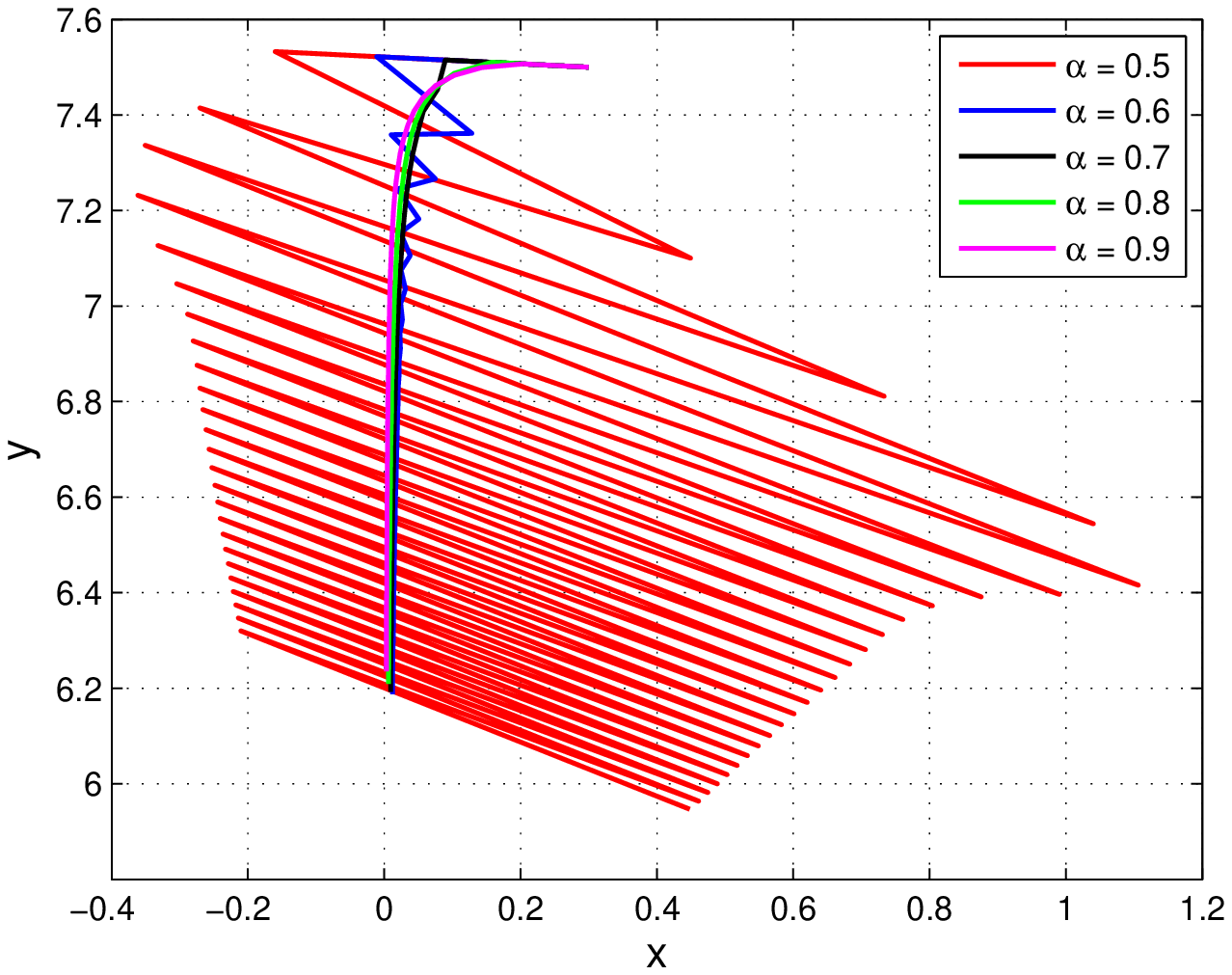}\\
        \footnotesize{(a) Model 1: $x_0 =15$, $y_0 =0.1$, $h=0.03$} & \footnotesize{(b) Model 2: $x_0 =0.3$, $y_0 =7.5$, $h=0.02$}\\
    \end{tabular}}
    \caption{Results obtained for Gr\"unwald-Letnikov method for different values of $\alpha$.}
\end{figure}
\end{center}

\section{Definition of the NSFD for fractional systems}

For discretization of (\ref{frac}) we assume that the integration is over the interval $[0,T]$ and we consider the equidistance partition of this interval with $t_i=t_0+ih$, $i=1,\ldots,N=\frac{T}{h}$, as nodal points with the time step $h>0$.\\

We discretize the Caputo derivative by the Gr{\"u}nwald-Letnikov difference operator.

To ensure the positivity of the NSFD scheme we split the right hand sides of equations of the model on the positive and negative part. The positive part we discretize using the previous time level and for the negative part we use the nonlocal discretization, to provide linearity with respect to the actual time level.

\begin{df}[NSFD for fractional systems]
\label{nsfd}
The Non Standard finite difference scheme for fractional systems of the form (\ref{frac}) is given by
\begin{equation}
\label{met}
{}_c \Delta_{a+} [X]_n  = f_+ (x_{n-1} ) -x_{n} f_- (x_{n-1} ), n=0,\dots ,N-1.
\end{equation}
\end{df}

We have tried to produce an abstract definition of a NSFD scheme for a general fractional systems in the spirit of \cite{ang1,ang2} but it can be easily done replacing the classical assumption on the approximation of the derivative by an adapted one on the fractional derivative.

\section{Positivity of the fractional NSFD scheme}

Our class of systems contains classical models for population dynamics dealing with densities which must remain positive. In order to be able to use numerical simulations to give insights on the model, we need to be sure that the positivity of each variable is preserved in order to avoid un-physical data. Our first theorem give a positive answer for our scheme :

\begin{twr}
\label{thw1}
The fractional NSFD scheme (\ref{met}) preserves positivity.
\end{twr}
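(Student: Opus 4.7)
The plan is to solve the implicit scheme explicitly for $x_n$ in terms of $x_0,\dots,x_{n-1}$ and argue by induction on $n$. The Mickens nonlocal trick -- discretising the negative term $xf_-(x)$ as $x_n f_-(x_{n-1})$ rather than $x_{n-1}f_-(x_{n-1})$ -- keeps the scheme linear in $x_n$ and, more importantly, puts a manifestly positive coefficient in front of $x_n$ on the left-hand side, which is what allows positivity to hold unconditionally in $h$.

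Concretely, let $\omega_r := \frac{(-1)^r}{r!}\alpha_r^{[\alpha]}$ denote the weights appearing in the discrete Caputo derivative, so that
\[
{}_c\Delta_{a+}[X]_n = \frac{1}{h^\alpha}\sum_{r=0}^n \omega_r\bigl(x_{n-r}-x_0\bigr).
\]
A direct sign check from $\alpha_r^{[\alpha]}=\prod_{k=0}^{r-1}(k-\alpha)/(k+1)$ with $0<\alpha<1$ gives $\omega_0=1$ and $\omega_r<0$ for every $r\geq 1$. Substituting into (\ref{met}) and isolating $x_n$ componentwise, I would obtain, for every $i=1,\dots,m$,
\[
(x_n)_i\Bigl[\frac{1}{h^\alpha}+(f_-(x_{n-1}))_i\Bigr]
= (f_+(x_{n-1}))_i + \frac{(x_0)_i}{h^\alpha}\Bigl(1+\sum_{r=1}^n \omega_r\Bigr) + \frac{1}{h^\alpha}\sum_{r=1}^n (-\omega_r)(x_{n-r})_i.
\]

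Under the inductive hypothesis that $x_0,\dots,x_{n-1}$ have nonnegative components, the verification is then a term-by-term sign check. The bracketed coefficient of $(x_n)_i$ is strictly positive since $f_-$ is nonnegative on the positive orthant and $h>0$; no constraint on $h$ is needed. On the right, $(f_+(x_{n-1}))_i\geq 0$ by the structural assumption on $f_+$, and each $(-\omega_r)(x_{n-r})_i$ with $r\geq 1$ is nonnegative because $-\omega_r>0$ and the previous iterate is nonnegative by hypothesis. The base case $n=0$ is the assumption $x_0\geq 0$, and the induction closes.

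The only nontrivial ingredient, and the main (mild) obstacle, is proving that $S_n:=1+\sum_{r=1}^n\omega_r > 0$ for every finite $n$. I would deduce this from the generalised binomial identity $\sum_{r=0}^\infty \omega_r = (1-1)^\alpha = 0$ (valid for $\alpha>0$): combined with the strict negativity $\omega_r<0$ for $r\geq 1$, the sequence $(S_n)$ is strictly decreasing and converges to $0$, hence is strictly positive for every finite index. With this partial-sum lemma in hand, the rest of the argument is pure bookkeeping, and since no smallness assumption on $h$ ever appears, the positivity-preservation is unconditional, as claimed.
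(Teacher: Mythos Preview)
Your argument is correct and mirrors the paper's proof almost exactly: isolate $x_n$, observe that the coefficient $1+h^\alpha f_-(x_{n-1})$ on the left is strictly positive, and use $\omega_0=1$, $\omega_r<0$ for $r\geq 1$ to see that every term on the right is nonnegative under the inductive hypothesis. The one place you are slightly more careful than the paper is in justifying $S_n=\sum_{r=0}^n\omega_r>0$ via the binomial identity $\sum_{r\geq 0}\omega_r=0$; the paper simply asserts this partial-sum positivity without comment (and one small caution: your formula $\omega_r=\frac{(-1)^r}{r!}\alpha_r^{[\alpha]}$ combined with the product formula you quote for $\alpha_r^{[\alpha]}$ is not quite consistent---the paper itself carries two conflicting conventions---but your intended weights and their signs are the right ones).
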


\begin{proof}
According to (\ref{met}) we obtain
\begin{equation}
x_{n} \left [ 1+h^{\alpha} f_- (x_{n-1} ) \right ] = h^{\alpha} f_+ (x_{n-1} ) +x_0 \di\sum_{k=0}^{n-1} \alpha_k^{[\alpha]} - \di\sum_{k=1}^{n-1} \alpha_k^{[\alpha]} x_{n-k} ,
\end{equation}
which gives, reindexing the last term
\begin{equation}
x_{n} \left [ 1+h^{\alpha} f_- (x_{n-1} ) \right ] = h^{\alpha} f_+ (x_{n-1} ) +x_0 \di\sum_{k=0}^{n-1} \alpha_k^{[\alpha]} - \di\sum_{k=0}^{n-2} \alpha_{k+1}^{[\alpha]} x_{n-1-k} .
\end{equation}
As $\di\sum_{k=0}^{n-1} \alpha_k^{[\alpha]} >0$, we only have to prove that the last term is negative when the $x_j \geq 0$ for $j\leq n-1$ in order to obtain positivity of the scheme for all $h>0$.

The main observation is that for $k\geq 1$, we have $\alpha_k^{[\alpha]} \leq 0$ as long as $0<\alpha \leq 1$. As for $j\leq n-1$, we assume that all $x_j \geq 0$, we deduce that
$\di\sum_{k=0}^{n-2} \alpha_{k+1}^{[\alpha]} x_{n-1-k} \leq 0$. This concludes the proof.
\end{proof}

\section{Preservation of the set of equilibrium points}

A classical problem in numerical analysis is that some methods produce "ghost" or "fake" equilibrium points. We refer to \cite{cp} for some examples dealing with the Runge-Kutta (RK) method of order $2$. If we denote by $\mathcal{E}$ the set of equilibrium points of the differential equation and by $\mathcal{F}_h$ the set of fixed point for the numerical scheme, we have in general $\mathcal{E} \subset \mathcal{F}$. One easily prove that the set of equilibrium is {\it exactly} preserved by the GL scheme, meaning that we have $\mathcal{F}=\mathcal{E}$.

\begin{twr}
The GL scheme preserves exactly the set of equilibrium points.
\end{twr}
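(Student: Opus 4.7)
The plan is to establish both inclusions $\mathcal{E}\subseteq\mathcal{F}_h$ and $\mathcal{F}_h\subseteq\mathcal{E}$ from a single structural observation: the discrete Caputo--Gr\"unwald--Letnikov operator annihilates every constant sequence. Indeed, for $x_n\equiv x_*$ one has $x_{i-r}-x_0=x_*-x_*=0$ for every $r$, so each term of
$$
{}_c\Delta_{a+}[X]_i \;=\; \frac{1}{h^\alpha}\sum_{r=0}^{i}\frac{(-1)^r}{r!}\,\alpha^{[\alpha]}_r\,\bigl[x_{i-r}-x_0\bigr]
$$
vanishes. This cancellation is the whole engine of the argument; it is made possible by the fact that the GL weights multiply \emph{differences} relative to the common reference value $x_0$.

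For $\mathcal{E}\subseteq\mathcal{F}_h$, I would fix an equilibrium $x_*$ of (\ref{eq}), so that $f(x_*)=0$, start the scheme with $x_0=x_*$, and proceed by induction on $n$ to show $x_n=x_*$. The base case is the initial condition. For the inductive step, assume $x_0=x_1=\dots=x_{n-1}=x_*$. Every summand with $r\geq 1$ in ${}_c\Delta_{a+}[X]_n$ vanishes, so the scheme equation ${}_c\Delta_{a+}[X]_n=f(x_n)$ collapses to
$$
\frac{1}{h^\alpha}\bigl(x_n-x_*\bigr)\;=\;f(x_n),
$$
which is visibly solved by $x_n=x_*$. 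Uniqueness of $x_n$ as a root of this implicit relation, under the local Lipschitz hypothesis on $f$ maintained throughout the paper, follows from a standard Banach contraction argument as soon as $h$ is taken small enough (the contraction constant is proportional to $h^{\alpha}$).

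For the converse inclusion $\mathcal{F}_h\subseteq\mathcal{E}$, suppose $x_*\in\mathcal{F}_h$, that is, the constant sequence $x_n\equiv x_*$ solves the scheme with initial datum $x_0=x_*$. The annihilation observation applied at any $n\geq 1$ yields ${}_c\Delta_{a+}[X]_n=0$, while the scheme imposes ${}_c\Delta_{a+}[X]_n=f(x_n)=f(x_*)$. Comparing the two equalities forces $f(x_*)=0$, i.e.\ $x_*\in\mathcal{E}$.

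I do not anticipate any serious obstacle: the result is essentially an algebraic reflection of the fact that differences from $x_0$ annihilate constant trajectories. The only point that deserves a sentence of justification is the uniqueness of $x_n$ in the implicit equation of the inductive step; this is the only place where a smallness assumption on $h$ is needed, and it is routine.
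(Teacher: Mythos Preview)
Your argument is correct and follows essentially the same route as the paper: the key observation that the discrete Caputo--Gr\"unwald--Letnikov operator vanishes on constant sequences immediately forces $f(x_*)=0$ for any fixed point, and conversely any zero of $f$ yields a constant solution. You are in fact more thorough than the paper, which only spells out the inclusion $\mathcal{F}_h\subseteq\mathcal{E}$ and does not discuss uniqueness in the implicit step; your induction and contraction remark are sound additions but not strictly necessary for the statement as phrased.
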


\begin{proof}
If $x_0$ is a fixed point of our numerical scheme, i.e. $x_n =x_0$ for $n\in \N$, then the Grunwald-Letnikov derivative is zero. Replacing in the GL scheme, we then obtain
\begin{equation}
f (x_0) =0,
\end{equation}
which corresponds exactly to the characterization of equilibrium points for the differential equation.
\end{proof}

The same argument can be used to prove that the NSFD scheme preserves also exactly the set of equilibrium points. Indeed, the Grunwald-Letnikov derivative will be also zero, and the shift in the right-end side does not make any difference as $x_n =x_0$ for all $n\in \N$. We then have :

\begin{twr}
The NSFD scheme preserves exactly the set of equilibrium points.
\end{twr}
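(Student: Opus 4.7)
The plan is to mirror the argument just given for the GL scheme, with one extra check that the nonlocal shift in the NSFD scheme does not create or destroy fixed points. Concretely, I would show both inclusions $\mathcal{E}\subset\mathcal{F}_h$ and $\mathcal{F}_h\subset\mathcal{E}$ where $\mathcal{E}=\{x_\ast : f(x_\ast)=0\}$ (using the characterization of equilibria from Lemma \ref{persistence-equilibrium}, and noting that by Theorem \ref{representation} one has $f(x)=f_+(x)-xf_-(x)$) and $\mathcal{F}_h$ is the set of constant sequences $x_n\equiv x_\ast$ satisfying the NSFD recursion (\ref{met}).

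First I would treat the inclusion $\mathcal{E}\subset\mathcal{F}_h$. Given $x_\ast\in\mathcal{E}$, set $x_n=x_\ast$ for all $n$. Then every increment $x_{n-r}-x_0$ in the discrete Grünwald–Letnikov derivative ${}_c\Delta_{a+}[X]_n$ is zero, hence the left-hand side of (\ref{met}) vanishes. On the right-hand side both evaluations occur at $x_\ast$, giving $f_+(x_\ast)-x_\ast f_-(x_\ast)=f(x_\ast)=0$. The recursion is therefore satisfied identically, so $x_\ast\in\mathcal{F}_h$.

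Conversely, for $\mathcal{F}_h\subset\mathcal{E}$, suppose the constant sequence $x_n\equiv x_\ast$ is a fixed point of the scheme. As above, ${}_c\Delta_{a+}[X]_n=0$ because each telescoping difference $x_{n-r}-x_0$ vanishes regardless of the coefficients $\alpha_r^{[\alpha]}$. The scheme (\ref{met}) then forces $f_+(x_\ast)-x_\ast f_-(x_\ast)=0$, i.e.\ $f(x_\ast)=0$, so $x_\ast\in\mathcal{E}$. This is precisely the place where the nonlocal shift matters: because $x_n$ and $x_{n-1}$ coincide on a fixed point, the fact that $f_-$ is evaluated at $x_{n-1}$ while the multiplicative factor is $x_n$ collapses to the same product $x_\ast f_-(x_\ast)$, so the decomposition $f=f_+-(\cdot)f_-$ is recovered exactly and no spurious fixed points are introduced.

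There is no genuine obstacle here; the only thing to double-check is that the nonlocal Mickens-type splitting does not perturb the equilibrium condition, which it does not precisely because one factor of $x$ is pulled out explicitly before the splitting. This gives $\mathcal{F}_h=\mathcal{E}$ and completes the proof.
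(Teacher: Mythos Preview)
Your proposal is correct and follows essentially the same approach as the paper: the paper simply remarks that the argument for the GL scheme carries over because the Gr\"unwald--Letnikov derivative of a constant sequence vanishes and, since $x_n=x_{n-1}=x_\ast$ on a fixed point, the nonlocal shift in the right-hand side of (\ref{met}) collapses to $f_+(x_\ast)-x_\ast f_-(x_\ast)=f(x_\ast)$. Your write-up is in fact more complete, as you spell out both inclusions $\mathcal{E}\subset\mathcal{F}_h$ and $\mathcal{F}_h\subset\mathcal{E}$ explicitly, whereas the paper treats the first as already granted.
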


\section{Convergence of the fractional NSFD scheme}

Let us denote by $x(t)$ an exact solution of the fractional differential systems (\ref{frac}) with initial condition $x_0 \in \mathbb{R}^m$. The associated numerical solution is denoted by $x_n$, $n\geq 0$ and defined by Definition \ref{nsfd}. We denote by $\phi_n (x_0 ,\dots ,x_{n-1} )$ the application defined by equation (\ref{met}).\\

We introduce the following error terms for all $n\geq 0$ :

\begin{equation} \label{err}
\left .
\begin{array}{lll}
e_n & = & x_n -x (t_n ) , \\
\tau_n  & = & \phi_n (x(t_0), x(t_1) ,\dots ,x(t_{n-1} )) - x (t_n ),\\
\mu_{n,l} & = & \phi_n (x_0, \dots ,x _{n-l} , x(t_{n-l+1} ), \dots , x (t_{n-1} ) )\\
 & & - \phi_n (x_0, \dots ,x(t_{n-l}),x (t_{n-l+1} ) , \dots , x (t_{n-1} ) ) ,\\
 & &  l=1,\dots ,n.
\end{array}
\right .
\end{equation}
called the {\it global error}, {\it local truncation error} and {\it computational error} respectively. By assumption, the application $\phi_n$ is defined by
\begin{equation}
\phi_n (z_0 ,\dots ,z_{n-1})= \frac{1}{\left [ 1+h^{\alpha} f_- (z_{n-1}) \right ]}
\left (
h^{\alpha} f_+ (z_{n-1}) +z_0 \di\sum_{k=0}^{n-1} \alpha_k^{[\alpha]} - \di\sum_{k=1}^{n-1} \alpha_k^{[\alpha]} z_{n-k}
\right )
.
\end{equation}

We have for all $n\geq 0$ :
\begin{equation}
e_n =\phi_n(x_0,\ldots,x_{n-1})-\phi_n(x(t_0),\ldots,x(t_{n-1}))+\tau_n=\sum_{l=1}^{n} \mu_{n,l} +\tau_n .
\end{equation}

This Section is devoted to the proof of the following convergence result :

\begin{twr}
\label{thw:conv}
Let $f_{-}$ and $f_{+}$ are lipschitz functions with constants $L_{-}$ and $L_{+}$ respectively and the analytical solution $x\in C^{1}([0,T],\R^m)$, then the numerical scheme is convergent with order $\mathcal{O}(h^\alpha)$.
\end{twr}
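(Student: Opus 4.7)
The plan is to bound the three error quantities separately and then combine via the decomposition $e_n = \sum_{l=1}^n \mu_{n,l} + \tau_n$ and a discrete fractional Gronwall inequality.

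I would first handle the local truncation error $\tau_n$. Applying (\ref{est}) to the exact solution gives
\begin{equation*}
\sum_{k=0}^{n}\alpha_k^{[\alpha]}(x(t_{n-k})-x(t_0)) = h^\alpha\bigl[f_+(x(t_n)) - x(t_n)f_-(x(t_n))\bigr] + h^\alpha \epsilon_n,
\end{equation*}
with $|\epsilon_n|=\mathcal{O}(h)$. Splitting off the $k=0$ term and comparing with the identity satisfied by $\tilde{x}_n=\phi_n(x(t_0),\ldots,x(t_{n-1}))$, the memory sums cancel exactly and one obtains
\begin{equation*}
\tau_n\,(1+h^\alpha f_-(x(t_{n-1}))) = h^\alpha\bigl[f_+(x(t_{n-1}))-f_+(x(t_n))\bigr] + h^\alpha x(t_n)\bigl[f_-(x(t_n))-f_-(x(t_{n-1}))\bigr] - h^\alpha\epsilon_n.
\end{equation*}
Using the $C^1$ estimate $|x(t_n)-x(t_{n-1})|\leq \|\dot x\|_\infty h$, the Lipschitz constants $L_\pm$, the boundedness of $x$ on $[0,T]$, and the lower bound $1+h^\alpha f_-(\cdot)\geq 1$ (from non-negativity of $f_-$ on the positively invariant region, cf.\ Theorems \ref{representation} and \ref{thm-positivity}), this yields $|\tau_n|\leq C\,h^{\alpha+1}$.

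Next I would bound each computational error $\mu_{n,l}$ from the explicit form of $\phi_n$. Swapping the $(n-l)$-th argument from $x(t_{n-l})$ to $x_{n-l}$ affects $\phi_n$ in two different ways depending on $l$. For $l=1$, both $f_\pm(z_{n-1})$ and the memory term $-\alpha_1^{[\alpha]}z_{n-1}$ change, which yields $|\mu_{n,1}|\leq (|\alpha_1^{[\alpha]}|+Kh^\alpha)|e_{n-1}|$ for a constant $K$ depending on $L_\pm$ and $\sup_{[0,T]}|x|$; for $l\geq 2$ only the memory coefficient is involved and one obtains $|\mu_{n,l}|\leq |\alpha_l^{[\alpha]}|\,|e_{n-l}|$. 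The same denominator lower bound keeps the division harmless.

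Combining these bounds with $e_0=0$ and the error decomposition produces the discrete fractional inequality
\begin{equation*}
|e_n|\leq Ch^{\alpha+1} + Kh^\alpha|e_{n-1}| + \sum_{l=1}^{n-1}|\alpha_l^{[\alpha]}|\,|e_{n-l}|.
\end{equation*}
Here the structural facts $\alpha_l^{[\alpha]}\leq 0$ for $l\geq 1$ and $\sum_{l=1}^{\infty}|\alpha_l^{[\alpha]}|=1$ (following from $(1-1)^\alpha=0$) are used in a discrete fractional Gronwall argument of Dixon--McKee type to absorb the convolution sum and propagate the local bound over the $N=T/h$ steps, giving $|e_n|\leq C'\,N\,h^{\alpha+1} = \mathcal{O}(h^\alpha)$ uniformly in $n\leq N$. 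The main obstacle is precisely this last step: the non-local memory makes naive Gronwall arguments introduce exponential factors of the form $\exp(KTh^{\alpha-1})$ that blow up as $h\to 0$ when $\alpha<1$; extracting the correct rate requires exploiting the sub-stochastic character of the coefficients $(|\alpha_l^{[\alpha]}|)_{l\geq 1}$ together with the smallness of the additional $h^\alpha$ correction, so that the local $\mathcal{O}(h^{1+\alpha})$ is only amplified by the factor $N=T/h$ rather than exponentially.
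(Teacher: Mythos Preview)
Your plan mirrors the paper's overall structure: the same decomposition $e_n=\sum_l\mu_{n,l}+\tau_n$, the same derivation of $\|\tau_n\|=\mathcal{O}(h^{1+\alpha})$ via (\ref{est}) together with the $C^1$/Lipschitz estimates, and the same case split $l=1$ versus $l\geq 2$ for the computational errors $\mu_{n,l}$. The genuine divergence is in the closing stability step. The paper does \emph{not} invoke a Dixon--McKee or fractional Gronwall argument; instead it replaces each weight $|\alpha_l^{[\alpha]}|$ by the crude bound $1$, turning the convolution into an unweighted sum $\sum_{k<n}\xi_k$, then sums the resulting recursion over $n$ and applies the \emph{classical} discrete Gronwall lemma (Lemma~\ref{lem:gron}), obtaining $\xi_j\leq(\xi_0+TCh^\alpha)\exp\bigl[\text{const}+\tfrac12 j(j-1)\bigr]$, after which it simply asserts the $\mathcal{O}(h^\alpha)$ conclusion. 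The concern you flag about $h$-dependent exponential factors is therefore not hypothetical: the paper's own exponent contains $\tfrac12 j(j-1)$ with $j\leq T/h$, so the constant in front of $h^\alpha$ is not uniform in $h$ as written. Your route---retaining the sharp weights and exploiting the sub-stochasticity $\sum_{l\geq 1}|\alpha_l^{[\alpha]}|=1$ through a weakly singular Gronwall inequality---is more delicate to execute, but it is exactly what is needed to produce an $h$-independent constant and make the $\mathcal{O}(h^\alpha)$ conclusion rigorous.
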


In fact we present numerical evidence that the order of convergence is often better than $\alpha$. We consider the model (\ref{u}) which is discussed in the next section. Since we do not know the analytical solution of the model, we take as an exact solution the approximation computed with a sufficiently small time step $h^{\star}$. In order to compare we take $h^{\star}=2^{-12}$. Let $x^h$ defines the approximated solution computed with a step $h$ and by $x^{h^{\star}}$ we denote the approximation obtained with $h^{\star}$ on the interval $[0,T]$. We compare the solution $x^h$ with $x^{h^{\star}}$ on a grid corresponding to the larger step $h$. The same nomenclature we use for the numerical solution $y$. We define the maximum errors
$$
\begin{array}{rl}
\varepsilon_x(h)&=\max{\{|x_n^h-x_n^{h^{\star}}|:n=0,\ldots,N\}}\\
\varepsilon_y(h)&=\max{\{|y_n^h-y_n^{h^{\star}}|:n=0,\ldots,N\}}
\end{array}
$$
$$
\xi(h)=\max{\{\varepsilon_x(h),\varepsilon_y(h)\}}
$$
and the standard rate of convergence
$$
\rho_{\alpha}=\log_2{\Big(\frac{\varepsilon(2^{\alpha}h^{\alpha})}{\varepsilon(h^{\alpha})}\Big)}.
$$
Table 1 presents the errors of the approximations computed by the applications of NFDS scheme with different step sizes and the order of convergence with respect to the different values of $\alpha$.
\begin{table}[ht]
\begin{tabularx}{\linewidth}{p{0.7cm}p{0.7cm}XXXXX}\hline
\multirow{2}{0.6cm}{$\alpha$} & & \multicolumn{4}{c}{$h$}\\ \cline{3-7}
 & & $2^{-3}$ & $2^{-4}$ & $2^{-5}$ & $2^{-6}$ & $2^{-7}$\\    \hline
\multirow{2}{0.6cm}{$0.5$} & $\xi$ & 2.093e-3 & 1.462e-3 & 1.007e-3 & 6.884e-4 & 4.663e-4\\
 & $\rho_{\alpha}$ & & 0.5170 & 0.5374 & 0.5499 & 0.5619 \\    \hline
\multirow{2}{0.6cm}{$0.6$} & $\xi$ & 1.758e-3 & 1.104e-3 & 6.883e-4 & 4.289e-4 & 2.672e-4\\
 & $\rho_{\alpha}$ & & 0.6709 & 0.6819 & 0.6821 & 0.6829 \\    \hline
 \multirow{2}{0.6cm}{$0.7$} & $\xi$ & 1.366e-3 & 7.618e-4 & 4.269e-4 & 2.423e-4 & 1.389e-4\\
 & $\rho_{\alpha}$ & & 0.8426 & 0.8355 & 0.8168 & 0.8025 \\    \hline
 \multirow{2}{0.6cm}{$0.8$} & $\xi$ & 1.094e-3 & 5.751e-4 & 2.965e-4 & 1.492e-4 & 7.389e-5\\
 & $\rho_{\alpha}$ & & 0.9284 & 0.9558 & 0.9899 & 1.0147 \\    \hline
 \multirow{2}{0.6cm}{$0.9$} & $\xi$ & 8.261e-4 & 4.335e-4 & 2.315e-4 & 1.185e-4 & 5.868e-5\\
 & $\rho_{\alpha}$ & & 0.9300 & 0.9050 & 0.9657 & 1.0142 \\    \hline
\end{tabularx}
\caption{Initial condition $(0.05,0.05)$, time interval $[0,1]$.}
\end{table}
\begin{center}
\begin{figure}[ht]
\centerline{%
    \begin{tabular}{cc}
        \includegraphics[width=0.5\textwidth]{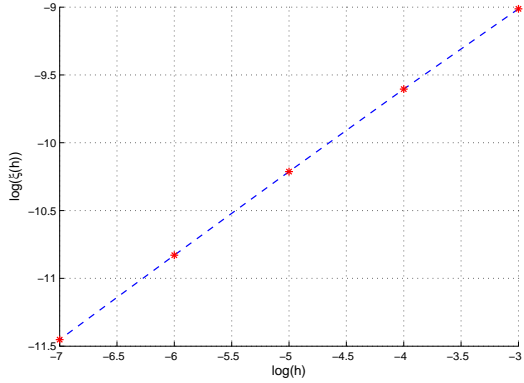} & \includegraphics[width=0.5\textwidth]{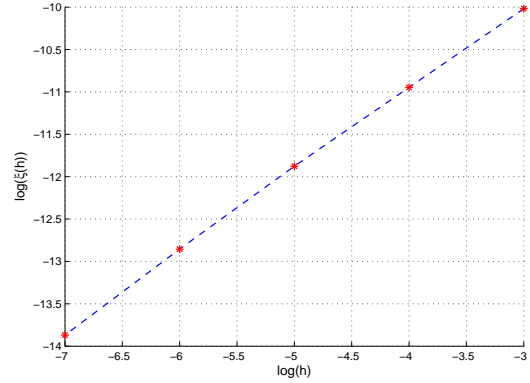}\\
        (a) $\alpha=0.5$ & (b) $\alpha=0.8$\\
    \end{tabular}}
    \caption{Order of convergence in a $log$ scale for initial condition $(0.05,0.05)$ and time interval $[0,1]$.}
\end{figure}
\end{center}
The proof of the Theorem \ref{thw:conv} follows the standard procedure : we estimate the global error of the numerical scheme looking on the local truncation error and the computational one.

\subsection{Local truncation error}

\begin{lem}
\label{lem:cons}
{\em (Local truncation error)}
Suppose that the assumptions of Theorem \ref{thw:conv} are satisfied, then the local truncation error of the fractional difference scheme (\ref{met}) is of order $\mathcal{O}(h^{1+\alpha})$.
\end{lem}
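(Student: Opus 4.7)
The plan is to compare the scheme applied to the exact trajectory with the continuous equation, term by term. Writing out $\phi_n$ in the defining form and subtracting $x(t_n)$, I would first put the identity $\phi_n(x(t_0),\dots,x(t_{n-1}))-x(t_n)=\tau_n$ into the equivalent shape
\begin{equation*}
\tau_n\bigl[1+h^{\alpha}f_-(x(t_{n-1}))\bigr] = h^{\alpha}\bigl[f_+(x(t_{n-1}))-x(t_n)f_-(x(t_{n-1}))\bigr] - \sum_{k=0}^{n-1}\alpha_{k}^{[\alpha]}\bigl(x(t_{n-k})-x(t_0)\bigr) + \alpha_n^{[\alpha]}\bigl(x(t_0)-x(t_0)\bigr),
\end{equation*}
i.e.\ the difference between the scheme's right-hand side (at shifted argument $x(t_{n-1})$, and with the nonlocal factor $x(t_n)$) and $h^{\alpha}$ times the discrete Gr\"unwald--Letnikov derivative of the exact solution at $t_n$.

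The key input is estimate (\ref{est}), which gives
\begin{equation*}
\frac{1}{h^{\alpha}}\sum_{k=0}^{n}\alpha_k^{[\alpha]}\bigl(x(t_{n-k})-x(t_0)\bigr)= {}_cD^{\alpha}_{0,t_n}x(t_n)+\mathcal{O}(h),
\end{equation*}
and hence, since $x$ solves (\ref{frac}),
\begin{equation*}
\sum_{k=0}^{n}\alpha_k^{[\alpha]}\bigl(x(t_{n-k})-x(t_0)\bigr) = h^{\alpha}\bigl[f_+(x(t_n))-x(t_n)f_-(x(t_n))\bigr]+\mathcal{O}(h^{1+\alpha}).
\end{equation*}
Substituting this into the displayed identity reduces the right-hand side to
\begin{equation*}
h^{\alpha}\bigl[f_+(x(t_{n-1}))-f_+(x(t_n))\bigr] + h^{\alpha}x(t_n)\bigl[f_-(x(t_n))-f_-(x(t_{n-1}))\bigr] + \mathcal{O}(h^{1+\alpha}).
\end{equation*}

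The next step is to control the two shifted-argument differences. Because $x\in C^1([0,T],\R^m)$, we have $|x(t_n)-x(t_{n-1})|\le\|x'\|_\infty h$, and Lipschitz continuity of $f_\pm$ then yields $|f_{\pm}(x(t_n))-f_{\pm}(x(t_{n-1}))|\le L_{\pm}\|x'\|_\infty h=\mathcal{O}(h)$. Together with boundedness of $x$ on $[0,T]$ (again from $x\in C^1$), the bracketed expression above is $\mathcal{O}(h^{1+\alpha})$. Finally, the prefactor $1+h^{\alpha}f_-(x(t_{n-1}))$ is bounded below by $1$ since $f_-\ge 0$ on the orbit, so dividing through gives $\tau_n=\mathcal{O}(h^{1+\alpha})$.

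I do not expect a serious obstacle beyond the bookkeeping: the only subtlety is making the three shifts ($x_{n-1}$ vs.\ $x(t_n)$ in $f_\pm$, and the implicit $x_n$ in the nonlocal term) all contribute at order $h^{1+\alpha}$ rather than the worse $h^{\alpha}$. The Lipschitz assumption on $f_\pm$ together with $C^1$ regularity of $x$ is exactly what converts each spatial shift of size $\mathcal{O}(h)$ into a time-shift error consistent with the $h^{1+\alpha}$ target, and the positivity of $f_-$ (built into our class of systems) ensures the denominator does not degrade the bound.
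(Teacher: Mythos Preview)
Your proposal is correct and follows essentially the same route as the paper: both arguments multiply through by $1+h^{\alpha}f_-(x(t_{n-1}))$, invoke the approximation~(\ref{est}) to replace the Gr\"unwald--Letnikov sum by $h^{\alpha}\,{}_cD^{\alpha}x(t_n)+\mathcal{O}(h^{1+\alpha})$, use that $x$ solves~(\ref{frac}), and then bound the two residual shifts $f_\pm(x(t_{n-1}))-f_\pm(x(t_n))$ via Lipschitz continuity of $f_\pm$ and the $C^1$ bound $|x(t_n)-x(t_{n-1})|\le \|x'\|_\infty h$, finally using $1+h^{\alpha}f_-\ge 1$ to divide. The only cosmetic difference is that the paper first introduces the auxiliary numerical value $x_n$ obtained from exact past data and identifies $\tau_n=x_n-x(t_n)$ before arriving at the same displayed expression you reach directly; your presentation is slightly more streamlined but identical in substance.
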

\begin{proof}
It is more convenient to write numerical method (\ref{met}) in the form
\begin{equation}
x_n+\sum_{k=0}^n \alpha_{k}^{[\alpha]}x_{n-k}-\alpha_n^{[\alpha-1]}x_0=h^{\alpha}f_{+}(x_{n-1})-h^{\alpha} x_n f_{-}(x_{n-1}).
\end{equation}
Once we add and subtract appropriate terms we obtain
\begin{equation}
\begin{split}
x_n-&x(t_n)+x(t_n)+\sum_{k=0}^n \alpha_{k}^{[\alpha]}x_{n-k}-\alpha_n^{[\alpha-1]}x_0=h^{\alpha}f_{+}(x_{n-1})-h^{\alpha} x_n f_{-}(x_{n-1})\\
-&h^{\alpha}f_{+}(x(t_{n}))+h^{\alpha}f_{+}(x(t_{n}))-h^{\alpha} x(t_n) f_{-}(x(t_n))+h^{\alpha} x(t_n) f_{-}(x(t_{n})).
\end{split}
\end{equation}
We examine the error of the method on the step $n$ therefore we assume that there is no local errors for $i=0,1,\ldots,n-1$, i.e. $x_j=x(t_j)$. It follows
\begin{equation}
\begin{split}
x_n-&x(t_n)+x(t_n)+\sum_{k=0}^n \alpha_{k}^{[\alpha]}x(t_{n-k})-\alpha_n^{[\alpha-1]}x(t_0)=h^{\alpha}f_{+}(x(t_{n-1}))-h^{\alpha} x_n f_{-}(x(t_{n-1}))\\
-&h^{\alpha}f_{+}(x(t_{n}))+h^{\alpha}f_{+}(x(t_{n}))-h^{\alpha} x(t_n) f_{-}(x(t_{n}))+h^{\alpha} x(t_n) f_{-}(x(t_{n})).
\end{split}
\end{equation}
Based on (\ref{est}) we have
\begin{equation}
\begin{split}
x_n-&x(t_n)+h^{\alpha}\, _{c}D_{0,t}^{\alpha}x(t_n)=h^{\alpha}f_{+}(x(t_{n-1}))-h^{\alpha} x_n f_{-}(x(t_{n-1}))\\
-&h^{\alpha}f_{+}(x(t_{n}))+h^{\alpha}f_{+}(x(t_{n}))-h^{\alpha} x(t_n) f_{-}(x(t_{n}))+h^{\alpha} x(t_n) f_{-}(x(t_{n}))+\mathcal{O}(h^{1+\alpha}).
\end{split}
\end{equation}
We eliminate identical parts and group appropriate terms together and we obtain
\begin{equation}
\begin{array}{rcl}
x_n-x(t_n)&= & h^{\alpha}\Big(f_{+}(x(t_{n-1}))-f_{+}(x(t_{n}))\Big)\\
 &+& h^{\alpha}\Big(x(t_n) f_{-}(x(t_{n}))-x_n f_{-}(x(t_{n-1}))\Big)+\mathcal{O}(h^{1+\alpha}).\end{array}
\end{equation}
According to definition of the local truncation error $\tau_n$ (\ref{err}) we deduce
\begin{equation}
\begin{split}
\tau_n=&\phi_n(x(t_0),\ldots,x(t_{n-1}))-x(t_n)=-x(t_n)\\
+&\frac{1}{\left[1+h^{\alpha}f_-(x(t_{n-1}))\right]}\left(h^{\alpha}f_+(x(t_{n-1}))+x(t_0)\sum_{k=0}^{n-1}\alpha_k^{[\alpha]}- \sum_{k=1}^{n-1}\alpha_k^{[\alpha]}x(t_{n-k})\right)\\
=&\frac{1}{\left[1+h^{\alpha}f_-(x(t_{n-1}))\right]}\left(h^{\alpha}f_+(x(t_{n-1}))+x(t_0)\sum_{k=0}^{n-1}\alpha_k^{[\alpha]}- \sum_{k=1}^{n-1}\alpha_k^{[\alpha]}x(t_{n-k})\right)\\
-&x(t_n)+x_n-\frac{1}{\left[1+h^{\alpha}f_-(x_{n-1})\right]}\left(h^{\alpha}f_+(x_{n-1})+x_0\sum_{k=0}^{n-1}\alpha_k^{[\alpha]}- \sum_{k=1}^{n-1}\alpha_k^{[\alpha]}x_{n-k}\right).
\end{split}
\end{equation}
It follows from assumption $x_j=x(t_j)$, for $j=0,\ldots,n-1$, that
\begin{equation} \label{tau}
\tau_n=x_n-x(t_n).
\end{equation}
Adding and subtracting components to above and taking (\ref{tau}) into account, we have
\begin{equation}
\begin{array}{rcl}
\tau_n &=&\frac{1}{\Big(1+h^{\alpha}f_-(z(t_{n-1}))\Big)}\bigg[h^{\alpha}\Big(f_{+}(z(t_{n-1}))-f_{+}(z(t_{n}))\Big)\\
 &+&h^{\alpha} z(t_n)\Big(f_-(z(t_n))-f_-(z(t_{n-1}))\Big)+\mathcal{O}(h^{1+\alpha})\bigg].\end{array}
\end{equation}
Let the constants $L, C_z, D >0$ be defined as follows
\begin{equation}
\label{const}
L=\max{\{L_{-},L_+\}}, \;\;\; C_z=\max{\{\|z(t)\|:t\in[0,T]\}},\;\;\; D=\max{\{\|z'(t)\|:t\in[0,T]\}}.
\end{equation}
As $1+h^{\alpha}f_-(x)\geq 1$ for all $x\geq 0$ then $\frac{1}{1+h^{\alpha}f_-(x)}\leq 1$ and from assumptions of lemma we obtain
$$
\|\tau_n\|\leq h^{\alpha} L h D +h^{\alpha} C_z L h D + \mathcal{O}(h^{1+\alpha})=h^{1+\alpha} \tilde{C}+\mathcal{O}(h^{1+\alpha}).
$$
Therefore, for sufficiently small step $h$ we conclude the thesis of the lemma.
\end{proof}

\subsection{Computational error}

We have $\mu_{n,l} = \phi_n (x_0, \dots x_{n-l},x(t_{n-l+1}),\ldots ,x(t_{n-1}) ) - \phi_n (x_0, \dots ,x(t_{n-l}),x (t_{n-l+1} ) , \dots , x (t_{n-1} ) ) $.
For $l=2,\ldots,n-1$ we have
\begin{equation}
\begin{split}
\mu_{n,l}=&\frac{1}{\left [ 1+h^{\alpha} f_- (x(t_{n-1})) \right ]}
\left (
h^{\alpha} f_+ (x(t_{n-1})) +x_0 \di\sum_{k=0}^{n-1} \alpha_k^{[\alpha]} - \di\sum_{k=1}^{l} \alpha_k^{[\alpha]} x_{n-k}-\di\sum_{k=l+1}^{n-1} \alpha_k^{[\alpha]} x(t_{n-k})
\right )\\
-&\frac{1}{\left [ 1+h^{\alpha} f_- (x(t_{n-1})) \right ]}
\left (
h^{\alpha} f_+ (x(t_{n-1})) +x_0 \di\sum_{k=0}^{n-1} \alpha_k^{[\alpha]} - \di\sum_{k=1}^{l-1} \alpha_k^{[\alpha]} x_{n-k}-\di\sum_{k=l}^{n-1} \alpha_k^{[\alpha]} x(t_{n-k})
\right )\\
=&\frac{1}{\left [ 1+h^{\alpha} f_- (x(t_{n-1})) \right ]}\alpha_l^{[\alpha]}\left( x(t_{n-l})-x_{n-l}\right)
.
\end{split}
\end{equation}
For $l=n$ and $l=1$ we obtain respectively
\begin{equation}
\begin{split}
\mu_{n,n}=&\frac{1}{\left [ 1+h^{\alpha} f_- (x(t_{n-1})) \right ]} \left( x_0-x(t_0)\right)\di\sum_{k=0}^{n-1} \alpha_k^{[\alpha]},\\
\mu_{n,1}=&\frac{h^{\alpha}f_+(x_{n-1})-\alpha_1^{[\alpha]}x_{n-1}}{\left [ 1+h^{\alpha} f_- (x_{n-1}) \right ]} -\frac{h^{\alpha}f_+(x(t_{n-1}))-\alpha_1^{[\alpha]}x(t_{n-1})}{\left [ 1+h^{\alpha} f_- (x(t_{n-1})) \right ]}.
\end{split}
\end{equation}
According to this, the sum of computational errors equals
\begin{equation}
\begin{split}
\sum_{l=0}^{n-1}\mu_{n,l}=&\frac{1}{\left [ 1+h^{\alpha} f_- (x(t_{n-1})) \right ]}\left (
\left( x_0-x(t_0)\right)\di\sum_{k=0}^{n-1} \alpha_k^{[\alpha]}+\sum_{l=2}^{n-1}\alpha_l^{[\alpha]} (x(t_{n-l})-x_{n-l})
\right )\\
+&\frac{h^{\alpha}(f_+(x_{n-1})-f_+(x(t_{n-1})))}{\left [ 1+h^{\alpha} f_- (x_{n-1}) \right ]} +\frac{h^{2\alpha}f_+(x(t_{n-1}))\Big(f_-(x(t_{n-1}))-f_-(x_{n-1})\Big)}{\left [ 1+h^{\alpha} f_- (x_{n-1}) \right ]\left [ 1+h^{\alpha} f_- (x(t_{n-1})) \right ]}\\
-&\frac{\alpha_1^{[\alpha]}}{\left [ 1+h^{\alpha} f_- (x_{n-1}) \right ]\left [ 1+h^{\alpha} f_- (x(t_{n-1})) \right ]}\Big[x_{n-1}-x(t_{n-1})\\
+&h^{\alpha}(x_{n-1}-x(t_{n-1}))f_-(x(t_{n-1}))+h^{\alpha} x(t_{n-1})(f_-(x(t_{n-1}))-f_-(x_{n-1}))\Big]
\end{split}
\end{equation}
In pursuance to the definition of $e_n$ (\ref{err}) we have
\begin{equation} \label{comp}
\begin{split}
|\sum_{l=0}^{n-1}\mu_{n,l}|\leq &|e_0|\di\sum_{k=0}^{n-1} \alpha_k^{[\alpha]}+\sum_{l=1}^{n-1}(-\alpha_l^{[\alpha]}) |e_{n-l}|
+h^{\alpha}|f_+(x_{n-1})-f_+(x(t_{n-1}))| \\
+&h^{2\alpha}|f_+(x(t_{n-1}))|\Big|f_-(x(t_{n-1}))-f_-(x_{n-1})\Big|+h^{\alpha}|e_{n-1}||f_-(x(t_{n-1}))|\\
+&h^{\alpha}|x(t_{n-1})|\Big|f_-(x(t_{n-1}))-f_-(x_{n-1})\Big|\\
\leq & |e_0|\di\sum_{k=0}^{n-1} \alpha_k^{[\alpha]}+\sum_{l=1}^{n-1}(-\alpha_l^{[\alpha]}) |e_{n-l}|+h^{\alpha}(L+M+C_zL)|e_{n-1}|+h^{2\alpha}ML|e_{n-1}|,
\end{split}
\end{equation}
where constant $M$ is defined as follows : $|f_{\pm,i}(x)|\leq M$ for all $i=1,\ldots,M$ and constants $L$ and $C_z$ are defined as in (\ref{const}).

\subsection{Proof of Theorem \ref{thw:conv}}

Once we already receive the information about the order of the local truncation error and estimated the computational error, we are able to obtain the final result for the convergence of the method (\ref{met}). Define $\xi_n=\max{\{|\varepsilon_{in}|:1\leq i\leq m\}}$. It follows from Lemma \ref{lem:cons} and (\ref{comp}) that $\xi_n$ satisfies the following difference inequality
$$
\xi_n\leq h^{\alpha}(L+M+C_zL)\xi_{n-1}+h^{2\alpha} M L\xi_{n-1}+\sum_{k=1}^n(-\alpha_k^{[\alpha]})\xi_{n-k}+\xi_0\alpha_n^{[\alpha-1]}+\|\tau_n\|
$$
As $(-\alpha_i^{[\alpha]})\leq 1$ for $i=1,\ldots,n$ and $\alpha_n^{[\alpha-1]}\leq 1$ then without loosing of generality we can write
$$
\xi_n\leq h^{\alpha}(L+M+C_zL+h^{\alpha}ML)\xi_{n-1}+\sum_{k=0}^{n-1}\xi_k+\xi_0+\|\tau_n\|.
$$
Summing above inequality with respect to $n$ we get
$$
\xi_j\leq \left[ h^{\alpha}(L+M+C_zL+h^{\alpha}ML)-1\right]\sum_{n=0}^{j-1}\xi_{n}+\sum_{n=0}^{j-1}(j-n)\xi_n+(j+1)\xi_0+\sum_{n=0}^{j-1}\|\tau_n\|
$$
and then
$$
\xi_j\leq \sum_{n=0}^{j-1}\left[h^{\alpha}(L+M+C_zL+h^{\alpha}ML)-1+j-n\right]\xi_n+j\xi_0+\sum_{n=0}^{j-1}\|\tau_n\|+\xi_0.
$$
We denote the constants $w_n$ as follows
$$
\begin{array}{ll}
w_0 =& h^{\alpha}(L+M+C_zL+h^{\alpha}ML)-1+2j,\\
w_n =& h^{\alpha}(L+M+C_zL+h^{\alpha}ML)-1+j-n,\;\;\; n=1,2,\ldots,j-1.
\end{array}
$$
From definition $w_n>0$ for all $0\leq n\leq j-1$. Finally we get the following recurrence inequality
\begin{equation}
\label{inequa}
\xi_j\leq \sum_{n=0}^{j-1}w_n \xi_n+\sum_{n=0}^{j-1}\|\tau_n\|+\xi_0.
\end{equation}

Moreover, we have the following discrete version of the Gronwall Lemma :

\begin{lem}
\label{lem:gron}
{\em (Discrete Gronwall lemma)\cite{QV}}.
Assume that $\{w_n\}_{n\geq 0}$ is a non-negative sequence, and that the sequence $\{\phi_n\}_{n\geq 0}$ satisfies
\begin{equation} \label{np}
\begin{split}
\phi_0\leq & g_0,\\
\phi_n\leq & g_0+\sum_{k=0}^{n-1}p_k+\sum_{k=0}^{n-1}w_k\phi_k,\;\;\; n\geq 1,
\end{split}
\end{equation}
then if $g_0\geq 0$ and $p_k\geq 0$ for all $k\geq 0$, $\phi_n$ satisfies
\begin{equation} \label{thesis}
\phi_n\leq \left(g_0+\sum_{k=0}^{n-1}p_k\right)\exp{\left(\sum_{k=0}^{n-1}w_k\right)},\;\;\; n\geq 1.
\end{equation}
\end{lem}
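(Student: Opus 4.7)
The plan is to prove the estimate by induction on $n$, exploiting the elementary inequality $1+x\leq e^{x}$ for $x\geq 0$ to convert the finite sums produced by the induction into the exponential appearing on the right-hand side. Set $G_{n}:=g_{0}+\sum_{k=0}^{n-1}p_{k}$ and $S_{n}:=\sum_{k=0}^{n-1}w_{k}$, with the conventions $G_{0}=g_{0}$ and $S_{0}=0$. Note that both $(G_n)$ and $(S_n)$ are non-decreasing in $n$ since $p_k, w_k\geq 0$, and that the claimed inequality reads $\phi_{n}\leq G_{n}\exp(S_{n})$.

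For the base case $n=1$, the hypothesis gives $\phi_{1}\leq g_{0}+p_{0}+w_{0}\phi_{0}\leq g_{0}+p_{0}+w_{0}g_{0}$. I bound $w_{0}g_{0}\leq w_{0}(g_{0}+p_{0})$ using $p_{0}\geq 0$, and then apply $1+w_{0}\leq e^{w_{0}}$ to obtain $\phi_{1}\leq (g_{0}+p_{0})(1+w_{0})\leq G_{1}\exp(S_{1})$.

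For the induction step, I assume $\phi_{k}\leq G_{k}\exp(S_{k})$ for all $k\leq n-1$. Plugging this into (\ref{np}) and using monotonicity of $G$ ($G_{k}\leq G_{n}$) yields
\begin{equation*}
\phi_{n}\leq G_{n}+\sum_{k=0}^{n-1}w_{k}G_{k}\exp(S_{k})\leq G_{n}\Bigl(1+\sum_{k=0}^{n-1}w_{k}\exp(S_{k})\Bigr).
\end{equation*}
It therefore suffices to establish the auxiliary estimate $1+\sum_{k=0}^{n-1}w_{k}\exp(S_{k})\leq \exp(S_{n})$, which itself follows by a short secondary induction on $n$: the initial case is again $1+w_{0}\leq e^{w_{0}}$, and the passage from $n$ to $n+1$ reads $\exp(S_{n})+w_{n}\exp(S_{n})=\exp(S_{n})(1+w_{n})\leq \exp(S_{n})\exp(w_{n})=\exp(S_{n+1})$.

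The argument has essentially no obstacle; the only delicate points are remembering to replace $w_{0}g_{0}$ by $w_{0}(g_{0}+p_{0})$ in the base case (which needs $p_{k}\geq 0$), and invoking monotonicity of $G_{n}$ to pull the factor $G_{n}$ out of the sum in the induction step. Both are straightforward consequences of the sign hypotheses, and the whole proof reduces to iterating $1+x\leq e^{x}$.
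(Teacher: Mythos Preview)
Your proof is correct. The induction is clean, and both the base case and the auxiliary estimate $1+\sum_{k=0}^{n-1}w_k\exp(S_k)\leq\exp(S_n)$ are handled properly via $1+x\leq e^x$; the use of monotonicity of $G_n$ to factor it out of the sum is exactly the right move.

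Note, however, that the paper does not actually prove this lemma: it is quoted verbatim from Quarteroni--Valli \cite{QV} and only \emph{applied} to the recursive inequality~(\ref{inequa}) in the convergence analysis. So there is no ``paper's own proof'' to compare against. Your argument supplies a self-contained justification that the paper omits by citation, and it is essentially the standard textbook proof of the discrete Gronwall inequality.
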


Applying this Lemma to inequality (\ref{inequa}), we obtain
\begin{equation}
\begin{split}
\xi_j\leq& \left(\xi_0+\sum_{n=0}^{j-1}\|\tau_n\|\right)\exp\bigg[\sum_{n=1}^{j-1}\Big(h^{\alpha}(L+M+C_zL+h^{\alpha}ML)-1+j-n\Big)\\
+&h^{\alpha}(L+M+C_zL+h^{\alpha}ML)-1+2j\bigg] .
\end{split}
\end{equation}
From Lemma \ref{lem:cons} there exists a constant $C>0$ such that $\|\tau_n\|\leq Ch^{1+\alpha}$. Therefore we have
$$
\xi_j\leq \left(\xi_0+TCh^{\alpha}\right)\exp{\left[ TL(L+M+C_zL+TML)+\frac{1}{2}j(j-1) \right]}\leq \mathcal{O}(\xi_0)+\mathcal{O}(h^{\alpha}).
$$
We conclude the convergence of numerical solution $x$ with order of convergence equals $\mathcal{O}(\xi_0)+\mathcal{O}(h^{\alpha})$.

\section{Numerical example and comparison with the GL scheme}

In the comparison to classical GL scheme we introduce the results obtained by application of NSFD method to the same test model (\ref{exmodel}).

\subsection{Positivity of solutions}

We apply NSFD method for the set of parameters fixed in model 1 and model 2.

\begin{center}
\begin{figure}[ht]
\centerline{%
    \begin{tabular}{cc}
        \includegraphics[width=0.48\textwidth]{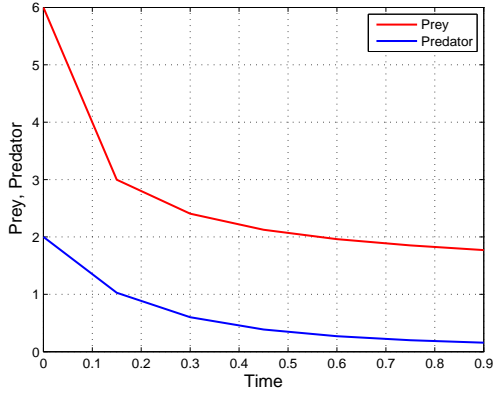} & \includegraphics[width=0.48\textwidth]{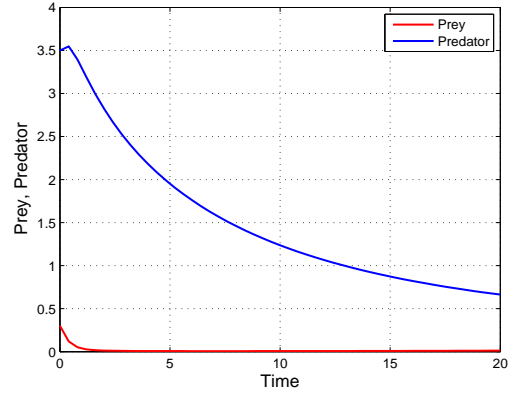}\\
        \footnotesize{(a) Model 1: $x_0 =6$, $y_0 =2$, $h=0.15$} & \footnotesize{(b) Model 2: $x_0 =0.3$, $y_0 =3.5$, $h=0.4$}\\
    \end{tabular}}
    \caption{The NSFD method with $\alpha=0.8$.}
\end{figure}
\end{center}

We observe that unlike the GL scheme (Figure 3) the nonstandard finite difference method preserves positivity even for large step size.

\subsection{Stability for equilibrium points}

Similarly as in the case of positivity, we can observe that the behavior of numerical solutions obtained by the application of NSFD scheme is consistent with behavior of analytical solutions of model (\ref{exmodel}).

\begin{center}
\begin{figure}[h]
\centerline{%
    \begin{tabular}{cc}
        \includegraphics[width=0.48\textwidth]{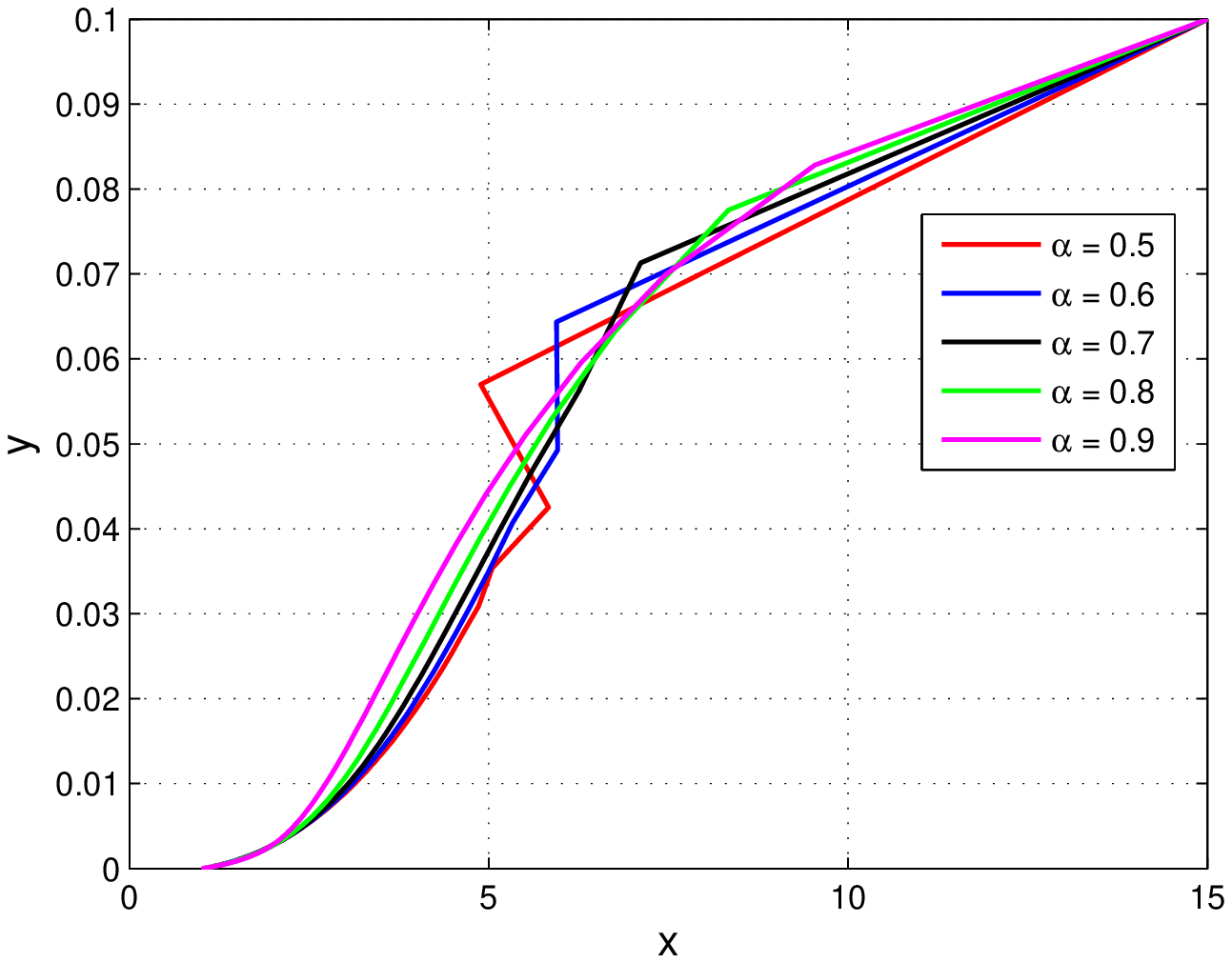} & \includegraphics[width=0.48\textwidth]{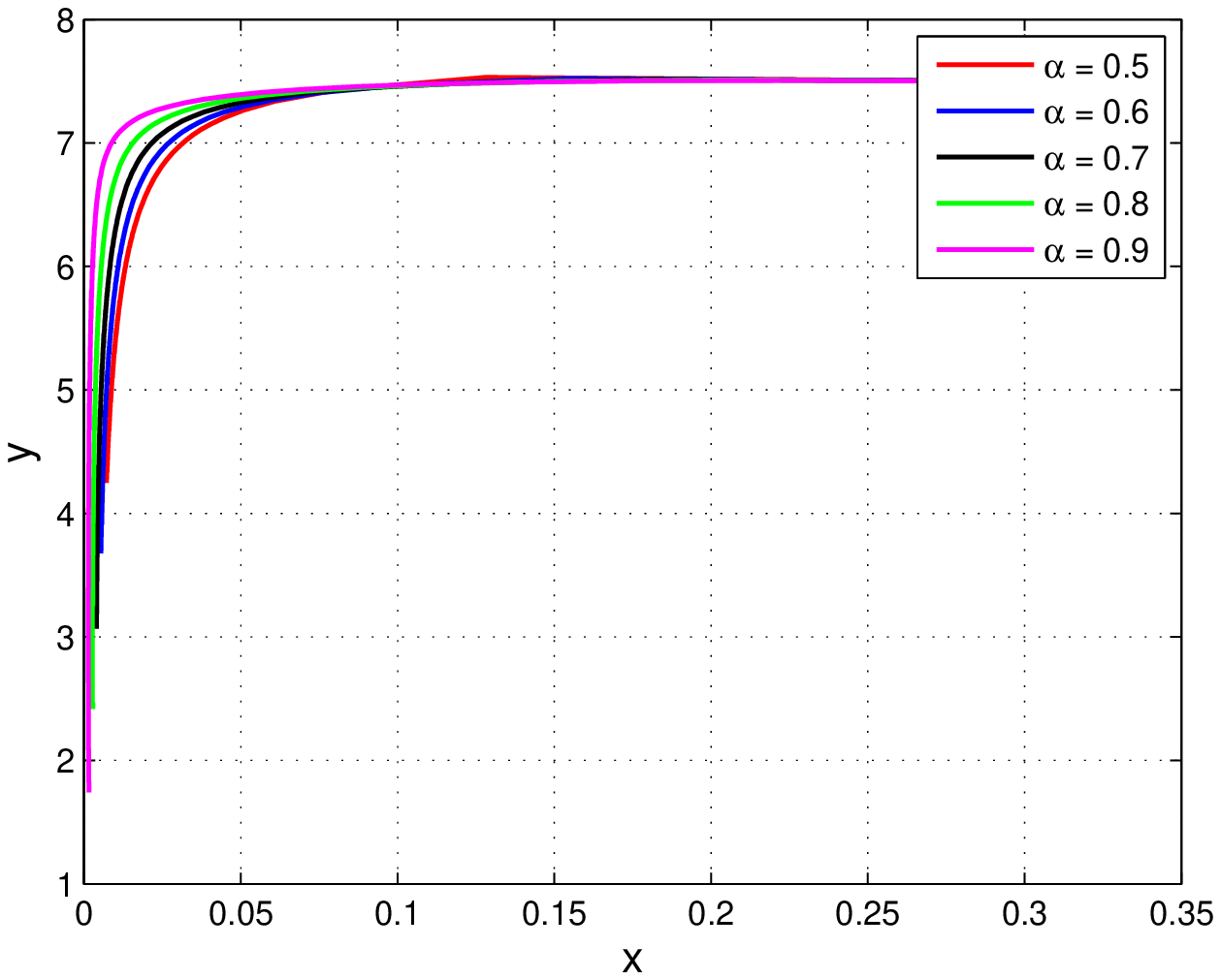}\\
        \footnotesize{(a) Model 1: $x_0 =15$, $y_0 =0.1$, $h=0.03$} & \footnotesize{(b) Model 2: $x_0 =0.3$, $y_0 =7.5$, $h=0.02$}\\
    \end{tabular}}
\end{figure}
\end{center}

\section{Comparison of complexity between the GL and NSFD scheme}

The complexity of the schemes are connected with the number of terms appearing in the discrete fractional GL derivative used for discretization of the fractional Caputo derivative. The number of terms is fixed by the value of the time step $h$. As a consequence it is important from the computational point of view to choose the time step $h$ as large as possible to obtain a good agreement with dynamical behavior of the system.
From this point of view the NSFD scheme seems to be competitive with GL scheme. Indeed, for all the examples we present in this paper we can observe that there is a factor of $10$ for the time step $h$ in order to obtain comparable results for both methods. More concretely, it means that we need to evaluate $\frac{9}{h}$ more quantities in the GL case to obtain a similar results as in the NSFD one. For example, for $\alpha=0.8$ on initial condition $(15,0.1)$ and time interval $[0,5]$ to obtain more or less comparable results with the NSFD scheme with $h=0.01$, which takes $0.36$ seconds, we need to consider the GL scheme with $h=0.001$ and in this case it takes $34.16$ seconds. This feature of NSFD methods gives possibility for numerical analysis of fractional differential problems on the large integration time, which is important due to the fact that the speed of convergence depends on the $\alpha$. It means that by reducing the value of $\alpha$ we need longer time to reach an equilibrium point.
\begin{center}
\begin{figure}[!h]
\centerline{%
    \begin{tabular}{cc}
        \includegraphics[width=0.48\textwidth]{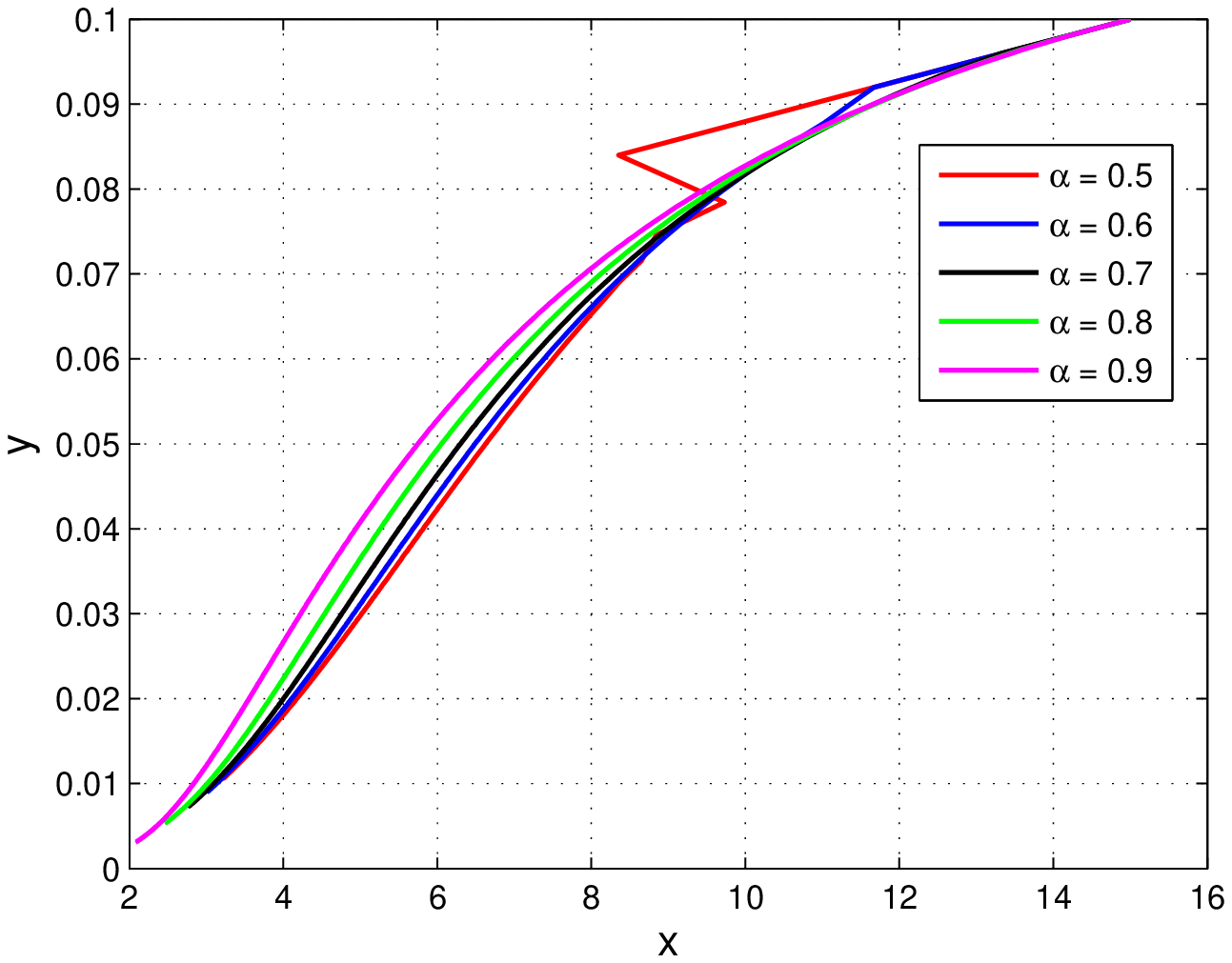} & \includegraphics[width=0.48\textwidth]{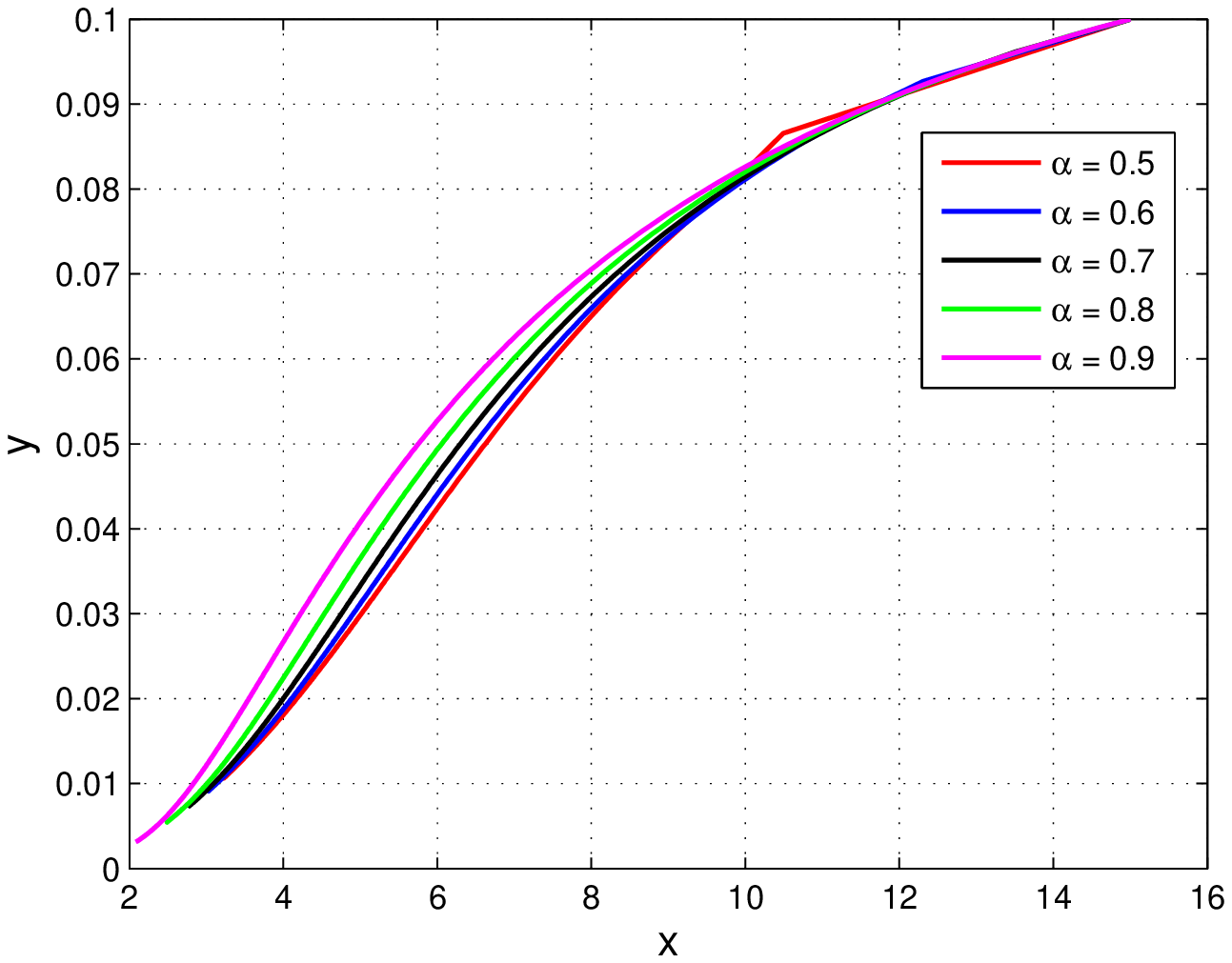}\\
        \footnotesize{(a)} & \footnotesize{(b)}
    \end{tabular}}
    \caption{Comparison of GL (a) and NSFD (b) method for numerical parameters $x_0 =15$, $y_0 =0.1$, $h=0.001$.}
\end{figure}
\end{center}



\newpage

\part{Application - a fractional predator-prey model}
\setcounter{section}{0}

In this Section, we give some applications of our numerical scheme to the fractional prey-predator model introduced by M. Javidi and al. \cite{JN}. In that paper, the authors generalize a classical prey-predator model replacing classical derivatives by Caputo fractional derivatives. Their system enters in our class of fractional systems and, as a consequence, preserves positivity (see Section \ref{model}). In order to illustrate some theoretical results included in the paper, the authors use a numerical method introduced by Atanackovic and Stankovich in (\cite{ata1},\cite{ata2}). This method consists in transforming first the fractional system into a classical one using some approximation formula for the fractional derivative. Second, to use a classical numerical method to integrate the differential system. In their paper, the authors use a Runge-Kutta method of $4-th$ order. It must be pointed out that there exists no convergence proof of this numerical method and that it deserves further studies as noticed in \cite{ata2}. This method is however effective, i.e. can be effectively implemented, but it can produce some dynamical artefact when the time step increment is not sufficiently small, in particular concerning the positivity property and the stability of the equilibrium point. To be more precise, the following numerical results of \cite{JN} do not correspond to the expected behavior of the solutions :

\begin{itemize}
\item In (Fig. 5, p. 8954) the positivity is not preserved.
\item In (Fig. 5, p. 8954) and (Fig. 3, p. 8952) the stability of the equilibrium point is not respected.
\item In (Fig. 1, p. 8950 and Fig. 3, p. 8952) the unicity of solutions is not satisfied.
\end{itemize}

In this part of the paper, we show that our method reproduces correctly the expected behavior of the solutions in all these cases for a lower computational demand. Indeed, the numerical method used in \cite{JN} uses the Runger-Kutta of order $4$ and some transformations which demand the numerical calculation of more quantities.

\section{Model introduction}
\label{model}

We consider the predator-prey interaction model with harvesting introduced in \cite{JN} and defined by
\begin{equation} \label{u}
\begin{split}
_{c}D_{0,t}^{\alpha}x(t)&=sx(t)\Big(1-\frac{x(t)}{K}\Big)-q\frac{x(t)y(t)}{1+q_1 x(t)},\\
_{c}D_{0,t}^{\alpha}y(t)&=\beta\frac{x(t)y(t)}{1+q_1 x(t)}-(s_0+E)y(t),
\end{split}
\end{equation}
where $0<\alpha\leq 1$. This model is a fractional generalization of the classical predator-prey model with Holling type-II interaction and nonconstant predator harvesting
\begin{equation} \label{classical}
\begin{split}
\frac{d x(t)}{dt}&=sx(t)\Big(1-\frac{x(t)}{K}\Big)-q\frac{x(t)y(t)}{1+q_1 x(t)},\\
\frac{d y(t)}{dt}&=\beta\frac{x(t)y(t)}{1+q_1 x(t)}-s_0y(t)-Ey(t),
\end{split}
\end{equation}
where $x(t)$ and $y(t)$ denote the prey and predator population densities, respectively. The harvesting policy in the above model involves a linear harvesting rate in terms of the predator species: $Ey(t)$, where $E\geq 0$ represents a constant harvesting effort for the predator. In other words, when the abundance of the predator species increases, then the number of predator harvested will increase linearly. The average per predator death rate is denoted as a constant $s_0$ and the positive parameter $\beta$ is the conversion factor denoting the number of newly born predators for each captured prey.

The model assumes that a predator spends its time searching for prey or prey handling (chasing, killing, eating and digesting), i.e. that the consumption of prey is limited because of the time that predator needs to capture, consume, and digest every particular prey. This limitations are covered in the model by the Holling type-II functional response: $\frac{qx(t)}{1+q_1 x(t)}$, where positive constants $q$ and $q_1$ denote the maximal predator consumption rate (unit: 1/time) and handling time (unit: 1/prey), respectively \cite{freedman}. In the case when the handling time $q_1$ is equal zero, which is equivalent to the situation that individual predators have an unlimited ability to catch and consume prey, we obtain the Holling type-I functional response appearing in the well known Lotka-Volterra model.

Moreover, in the absence of predation, the prey host population satisfies logistic growth, i.e. $\frac{dx(t)}{dt}=sx(t)\Big(1-\frac{x(t)}{K}\Big)$, where $K$ is carrying capacity of the pray in some closed community in the absence of predator and harvesting, the constant $s$ denotes the intrinsic growth rate for prey population.

The construction of model (\ref{u}) allows to consider it as an example of our general class of fractional differential equations (\ref{frac}) and, as a consequence, based on Theorem \ref{FPT}, we have :

\begin{twr}
The fractional prey-predator system (\ref{u}) preserves positivity.
\end{twr}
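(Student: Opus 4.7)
The plan is to exhibit the system (\ref{u}) as an instance of the class (\ref{frac}) and then invoke the Fractional Persistence Theorem (Theorem \ref{FPT}), whose positivity clause follows from Theorem \ref{thm-positivity}.

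First I would rewrite each right-hand side so that the positive contribution is isolated from the part factoring out the corresponding variable. For the prey equation, I would split
\begin{equation*}
sx\Bigl(1-\tfrac{x}{K}\Bigr)-q\tfrac{xy}{1+q_1 x}
= sx \;-\; x\Bigl(\tfrac{s\,x}{K}+\tfrac{qy}{1+q_1 x}\Bigr),
\end{equation*}
so I set $f_{+,1}(x,y)=sx$ and $f_{-,1}(x,y)=\tfrac{sx}{K}+\tfrac{qy}{1+q_1 x}$. For the predator equation, the term $-(s_0+E)y$ is already in the required $-y f_{-,2}$ form, so I take
\begin{equation*}
f_{+,2}(x,y)=\beta\tfrac{xy}{1+q_1 x},\qquad f_{-,2}(x,y)=s_0+E.
\end{equation*}
With $f_\pm=(f_{\pm,1},f_{\pm,2})$, the system (\ref{u}) becomes exactly $_{c}D^{\alpha}_{0+}[X]=f_+(X)-X f_-(X)$, matching (\ref{frac}).

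Next I would verify the two hypotheses required to place (\ref{u}) inside the class studied in Section \ref{class}. Positivity on the non-negative orthant is immediate: all parameters $s,K,q,q_1,\beta,s_0,E$ are positive and the denominator $1+q_1 x$ stays bounded below by $1$ when $x\geq 0$, so $f_{+,i}(X)\geq 0$ and $f_{-,i}(X)\geq 0$ whenever $X\in\R^2_+$. Regularity is also clear: each component is a rational function of $(x,y)$ whose denominator never vanishes on a neighborhood of $\R^2_+$, hence $f_\pm$ is $C^\infty$ there and therefore locally Lipschitz on any bounded subset of $\R^2_+$.

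Finally, the equivalent condition $(P)_1$ can be read off directly, giving a second route: when $x=0$ the prey rate reduces to $0\geq 0$, and when $y=0$ the predator rate reduces to $0\geq 0$, so Theorem \ref{posi} applies at the level $\alpha=1$ and Theorem \ref{thm-positivity} lifts this to the Caputo setting. Equivalently, this is precisely the positivity bullet in the Fractional Persistence Theorem \ref{FPT}, which concludes the proof. I do not anticipate a real obstacle; the only point that needs a brief comment is that local (rather than global) Lipschitz continuity is enough, since Theorem \ref{thm-positivity} only requires local Lipschitzness and the solution stays in a region where the denominator $1+q_1 x$ is bounded away from zero.
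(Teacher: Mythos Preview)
Your proposal is correct and follows exactly the approach the paper takes: the paper simply notes that system (\ref{u}) is an instance of the class (\ref{frac}) and invokes Theorem \ref{FPT}, and the explicit $f_+/f_-$ decomposition you wrote out is precisely the one the paper uses later in the NSFD scheme (\ref{ud}). Your write-up is in fact more complete than the paper's, since you explicitly verify the positivity and local Lipschitz hypotheses that the paper leaves implicit.
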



As in \cite{JN}, we study the equilibrium points of the system (\ref{u}) in order to compare with the results obtained in \cite{JN}. We will observe how the solutions behave near the equilibrium points in terms of positivity, stability or unicity. \\

The equilibrium points of (\ref{u}) are solutions of the system (see II,2,2.1) :
$$
\begin{array}{ll}
f_1(x,y)=&sx(t)\Big(1-\frac{x(t)}{K}\Big)-q\frac{x(t)y(t)}{1+q_1 x(t)}=0,\\
f_2(x,y)=&\beta\frac{x(t)y(t)}{1+q_1 x(t)}-s_0y(t)-Ey(t)=0.
\end{array}
$$
We deduce that there exists the trivial and semi-trivial intersection points for any choice of positive values of parameters: the origin point $P_0=(0,0)$  and the predator extinction point $P_1=(K,0)$. The interior equilibrium point
$$
P_2=\left(\frac{K}{R_0+q_1K(R_0-1)},\frac{sR_0(1+q_1K)^2(R_0-1)}{q(R_0+q_1K(R_0-1))^2}\right)=(x_{\ast},y_{\ast}),
$$
where $R_0=\frac{\beta K}{(1+q_1K)(s_0+E)}$, exists if $R_0>1$ and $x_{\ast}\leq K$. It means that at the point $P_2$ the predator and prey coexist under certain conditions.

The results of the stability analysis of fixed points presented in \cite{JN} are summarized in the statement below.

\begin{twr}
\label{tw:1}
The equilibrium point
\begin{itemize}
\item[$\bullet$] $P_0$ is a saddle point,
\item[$\bullet$] $P_1$ is locally asymptotically stable if $R_0<1$ and unstable if $R_0>1$,
\item[$\bullet$] $P_2$ is locally asymptotically stable if $\alpha<\tilde{\alpha}$ and unstable if $\alpha>\tilde{\alpha}$, where $\tilde{\alpha}$ is a marginal value equals $\frac{2}{\pi}|\arg{(\lambda_i)}|$, $i=1,2$.
\end{itemize}
\end{twr}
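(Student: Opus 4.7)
The plan is to combine the Fractional Persistence Theorem (Theorem \ref{FPT}) with Matignon's criterion $(S)_\alpha$ applied to the linearization at each equilibrium. Concretely, I would compute the Jacobian
\[
J(x,y) = \begin{pmatrix} s - \dfrac{2sx}{K} - \dfrac{qy}{(1+q_1 x)^2} & -\dfrac{qx}{1+q_1 x} \\[6pt] \dfrac{\beta y}{(1+q_1 x)^2} & \dfrac{\beta x}{1+q_1 x} - (s_0+E) \end{pmatrix},
\]
evaluate it at each of the three equilibria, and check in which parameter regime the eigenvalues $\lambda_i$ satisfy $|\arg(\lambda_i)| > \alpha \pi/2$.

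At $P_0 = (0,0)$ the Jacobian is diagonal with eigenvalues $\lambda_1 = s > 0$ (argument $0$) and $\lambda_2 = -(s_0+E) < 0$ (argument $\pi$). The first eigenvalue violates $(S)_\alpha$ for every $\alpha \in (0,1]$, while the second satisfies it. This is the classical saddle structure, so by Theorem \ref{persistence-stability} the Caputo fractional embedding preserves the saddle nature of $P_0$. At $P_1 = (K,0)$ the Jacobian is upper triangular with eigenvalues $\lambda_1 = -s < 0$ and $\lambda_2 = \tfrac{\beta K}{1+q_1 K} - (s_0+E) = (s_0+E)(R_0 - 1)$. When $R_0 < 1$ both eigenvalues are real and negative, so $|\arg(\lambda_i)| = \pi > \alpha\pi/2$ for all $\alpha \in (0,1]$, giving (linear) asymptotic stability; when $R_0 > 1$ the second eigenvalue becomes positive and $(S)_\alpha$ fails, giving instability.

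For $P_2 = (x_\ast, y_\ast)$, using the equilibrium identities $\tfrac{\beta x_\ast}{1+q_1 x_\ast} = s_0+E$ and $s(1-\tfrac{x_\ast}{K}) = \tfrac{q y_\ast}{1+q_1 x_\ast}$ to simplify, $J(x_\ast, y_\ast)$ has a characteristic polynomial $\lambda^2 - T\lambda + D = 0$ with explicit trace $T$ and determinant $D$ depending on the model parameters. In the parameter regime where $T^2 - 4D < 0$, the eigenvalues form a complex conjugate pair with the same modulus of argument, and Matignon's condition $|\arg(\lambda_i)| > \alpha \pi/2$ translates exactly into $\alpha < \tfrac{2}{\pi}|\arg(\lambda_i)| = \tilde{\alpha}$; if the inequality is reversed, $(S)_\alpha$ fails and $P_2$ is unstable. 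This is precisely the statement of the theorem.

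The main obstacle is the algebra at $P_2$: one has to verify that the reduced trace/determinant lead to eigenvalues with $|\arg| \in (\pi/2,\pi)$ so that $\tilde{\alpha} \in (0,1)$ is a genuine threshold inside the admissible range of fractional orders, and to handle separately the degenerate case $T^2 \geq 4D$ (real eigenvalues), where the threshold collapses and the fractional persistence theorem alone concludes. The cases $P_0$ and $P_1$ are essentially direct consequences of Theorem \ref{persistence-stability}, since the signs of the real eigenvalues decide Matignon's criterion uniformly in $\alpha$; only at $P_2$ does the genuinely fractional phenomenon (dependence on $\alpha$) appear.
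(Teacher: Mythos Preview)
Your approach is correct and is precisely the natural one: linearize, compute the Jacobian eigenvalues at each equilibrium, and apply Matignon's criterion $(S)_\alpha$, invoking Theorem~\ref{persistence-stability} for the cases where the eigenvalues are real so that the conclusion is uniform in $\alpha$. Note, however, that the paper does not supply its own proof of this theorem at all: it is stated as a summary of the stability analysis carried out in \cite{JN} (see the sentence immediately preceding the statement). Your outline therefore goes beyond what the paper itself provides, but it is exactly what one would do to justify the result using the paper's own machinery, and the computations you sketch for $P_0$ and $P_1$ are accurate; your caveat about the complex-eigenvalue regime at $P_2$ is also the right one to flag.
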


In above $\lambda_i$, $i=1,2$ are eigenvalues of the Jacobian matrix
$$
J(P_2)=\left[\begin{array}{cc} \frac{\partial f_1}{\partial x} & \frac{\partial f_1}{\partial y}\\
\frac{\partial f_2}{\partial x} & \frac{\partial f_2}{\partial y}\end{array}\right]_{(x,y)=(x_{\ast},y_{\ast})}.
$$

In the following, we will use this Theorem in order to provide for each numerical simulations the conditions for stability.

\section{The NSFD scheme for prey-predator model}

According to the Theorem \ref{thw1} we are able to construct for the problem (\ref{u}) the nonstandard finite difference scheme of fractional order which preserves positivity. The method has the following form
\begin{equation}
\label{ud}
\begin{split}
\frac{1}{h^{\alpha}}\sum_{j=0}^n \alpha_j^{[\alpha]}(x_{n-j}-x_0)& =sx_{n-1}-\Big(\frac{s}{K}x_{n-1}+\frac{q y_{n-1}}{1+q_1 x_{n-1}}\Big)x_{n},\\
\frac{1}{h^{\alpha}}\sum_{j=0}^n \alpha_j^{[\alpha]}(y_{n-j}-y_0)& =\frac{\beta x_{n-1}}{1+q_1 x_{n-1}} y_{n-1}-(s_0+E)y_n,
\end{split}
\end{equation}
where $\alpha_0^{[\alpha]}, \alpha_j^{[\alpha]}$ are given by (\ref{lam}).
Left hand sides of system (\ref{ud}) can be rewritten as follows
\begin{equation} \label{ud1}
\begin{split}
x_n+\sum_{j=1}^n\alpha_j^{[\alpha]}x_{n-j}-x_0\sum_{j=0}^n\alpha_j^{[\alpha]}&=h^{\alpha}sx_{n-1}-h^{\alpha}\Big(\frac{s}{K}x_{n-1}+\frac{q y_{n-1}}{1+q_1 x_{n-1}}\Big)x_{n},\\
y_n+\sum_{j=1}^n\alpha_j^{[\alpha]}y_{n-j}-y_0\sum_{j=0}^n\alpha_j^{[\alpha]}&=h^{\alpha}\frac{\beta x_{n-1}}{1+q_1 x_{n-1}} y_{n-1}-h^{\alpha}(s_0+E)y_n.
\end{split}
\end{equation}
According to definition (\ref{lam}) of $\alpha_j^{[\alpha]}$ it is easily seen that the sum of this coefficient is positive, i.e.
$$
\sum_{j=0}^n\alpha_j^{[\alpha]}=(1-\alpha)(2-\alpha)\cdots(n-1-\alpha)\frac{n-\alpha}{n!}>0.
$$
and
$$
(1-\alpha)(2-\alpha)\cdots(n-1-\alpha)\frac{n-\alpha}{n!}=\frac{1}{n!}\prod_{k=0}^{n-1}(k-(\alpha-1))=\alpha_n^{[\alpha-1]}.
$$
This leads to the explicit formula for the approximate values of prey and predator
\begin{equation} \label{ud1}
\begin{split}
x_n&=\frac{Ku_{n-1}\left[h^{\alpha}s x_{n-1}-\sum_{j=1}^n\alpha_j^{[\alpha]} x_{n-j}+x_0\alpha_n^{[\alpha-1]}\right]}{Ku_{n-1}+h^{\alpha}\Big(su_{n-1}x_{n-1}+qKy_{n-1}\Big)},\\
y_n&=\frac{h^{\alpha}\beta x_{n-1}y_{n-1}-u_{n-1}\sum_{j=1}^n\alpha_j^{[\alpha]} y_{n-j}+u_{n-1}y_0\alpha_n^{[\alpha-1]}}{(1+ h^{\alpha}(s_0+E))u_{n-1}},
\end{split}
\end{equation}
where $u_n=1+q_1 x_{n}$.

\section{Numerical simulations}

In this section, using our numerical scheme we provide simulations and compare it with the corresponding one provided in \cite{JN}. The time step is chosen to be $h=0.01$ as in (\cite{JN}, Section 5 p. 8954) or higher (for example $h=0.1$) when the numerical result is already better than the one obtained in \cite{JN}.

\subsection{Lost of unicity}

Numerical scheme often loose unicity of solution when the time step increment is too big. This is the case in the examples of simulations provided by Javidi and al. in \cite{JN} (Fig. 1, p. 8950 and Fig. 3, p. 8952) with a time step increment $h=0.01$. Using nonstandard finite difference scheme, we observe similar patterns but respecting unicity of solutions. Moreover, the results are already concordant for at least ten times higher time step.

\begin{center}
\begin{figure}[!h]
\centerline{%
    \begin{tabular}{cc}
        \includegraphics[width=0.48\textwidth]{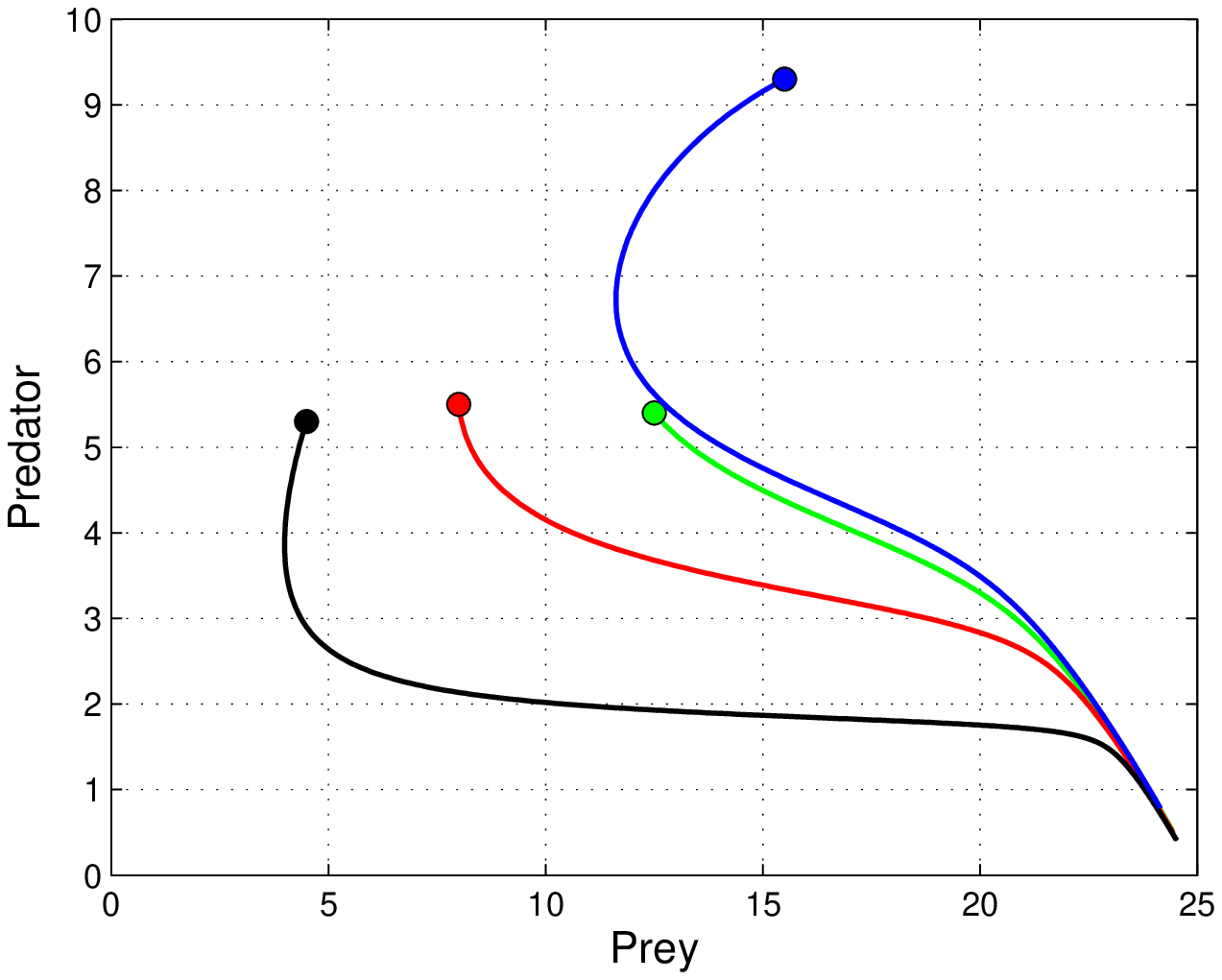} & \includegraphics[width=0.48\textwidth]{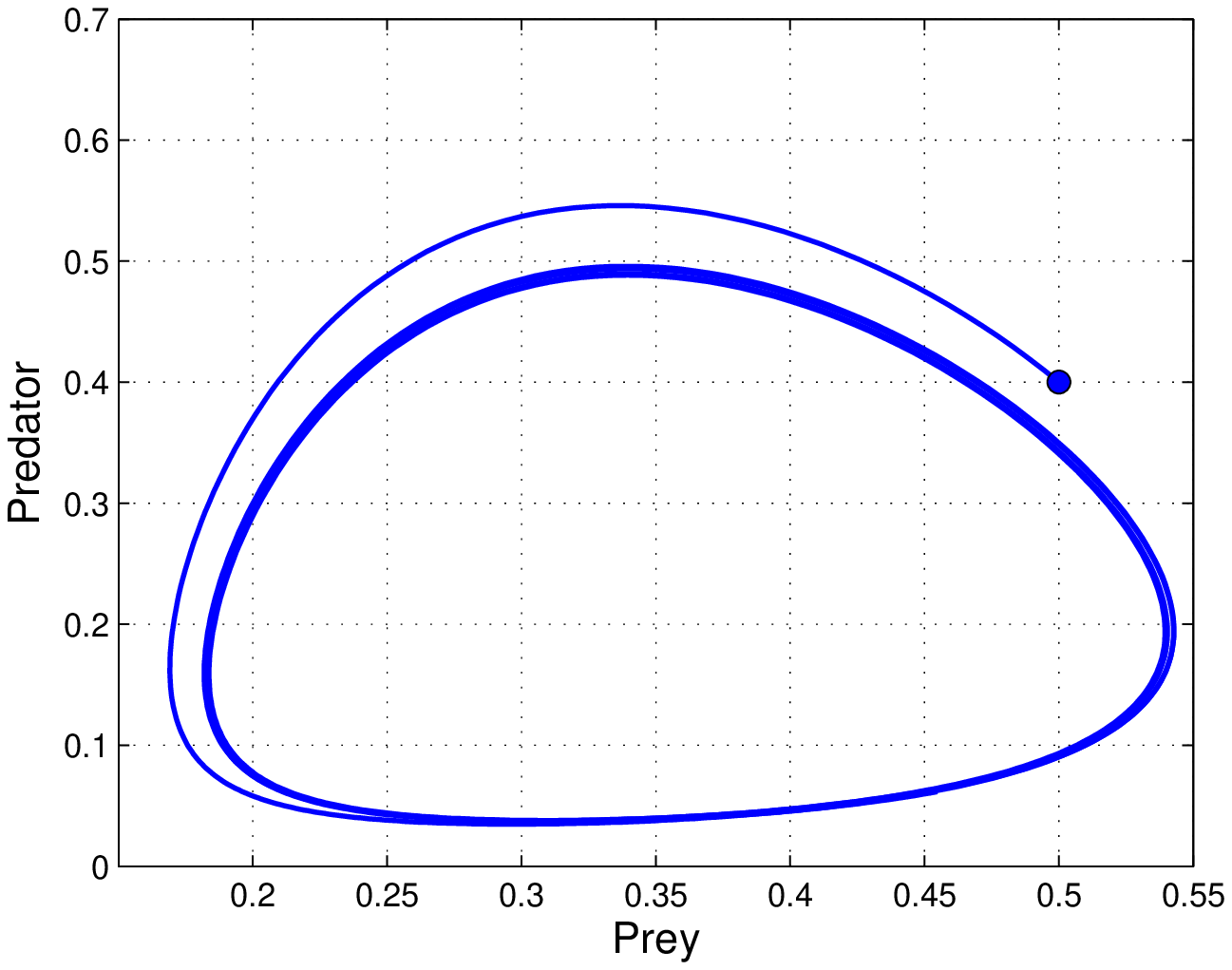}\\
        \footnotesize{(a) Comparison with Fig. 1, p. 8950 in \cite{JN}} & \footnotesize{(b) Comparison with Fig. 3, p. 8952 in \cite{JN}}\\
    \end{tabular}}
    \caption{Results obtained by the method (\ref{met}), $h=0.1$.}
\end{figure}
\end{center}

\subsection{Lost of positivity}
\label{lost-positivity}

The property of positivity for numerical solutions obtained by standard discretization can be disturbed. This occurrence can be observed in the example presented in (\cite{JN}, Fig. 5, $\alpha=0.65$) with parameters $s=0.2$, $K=25$, $q=1$, $q_1=0.1$, $\beta=2$, $s_0=0.5$, $E=1.3$. For this set of parameters the threshold for the stability of the equilibrium point $P_2=(0.9890,0.2111)$ is given by $R_0=7.9365$. The marginal value is $\tilde{\alpha}=0.9947$ and initial conditions $(6.5,5.4)$, $(8,5.5)$, $(4.5,4.3)$, $(8.7,2.3)$. In front of results presented in \cite{JN} we give ones obtained by the nonstandard numerical method (\ref{met}) with the same parameters. For comparison purposes we put in our calculations the same time step ($h=0.01$) and ten times larger ($h=0.1$) than in \cite{JN}. Nevertheless, we receive results which are compatible with the expected dynamical behavior.
\begin{center}
\begin{figure}[!h]
\centerline{%
    \begin{tabular}{cc}
        \includegraphics[width=0.48\textwidth]{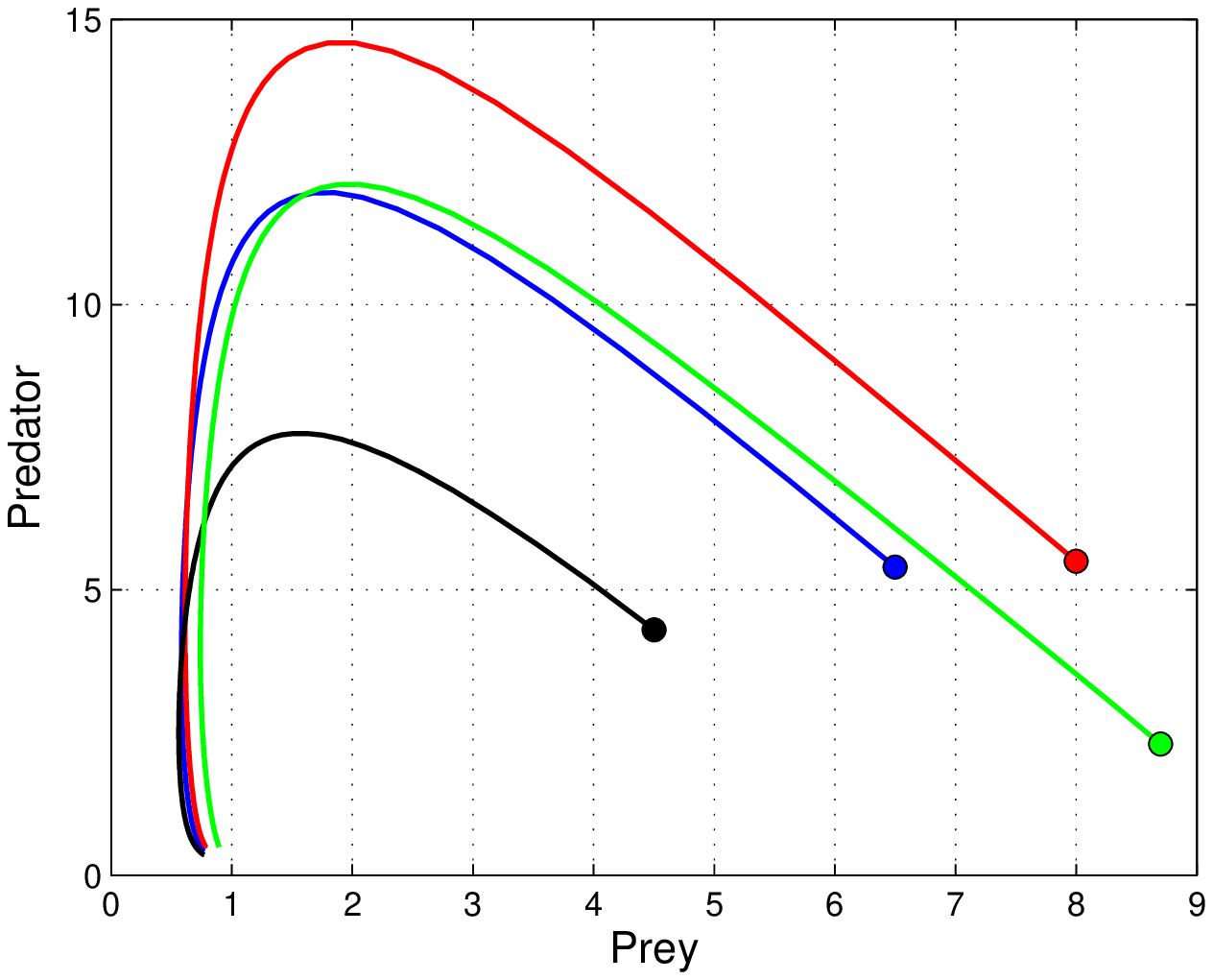} & \includegraphics[width=0.48\textwidth]{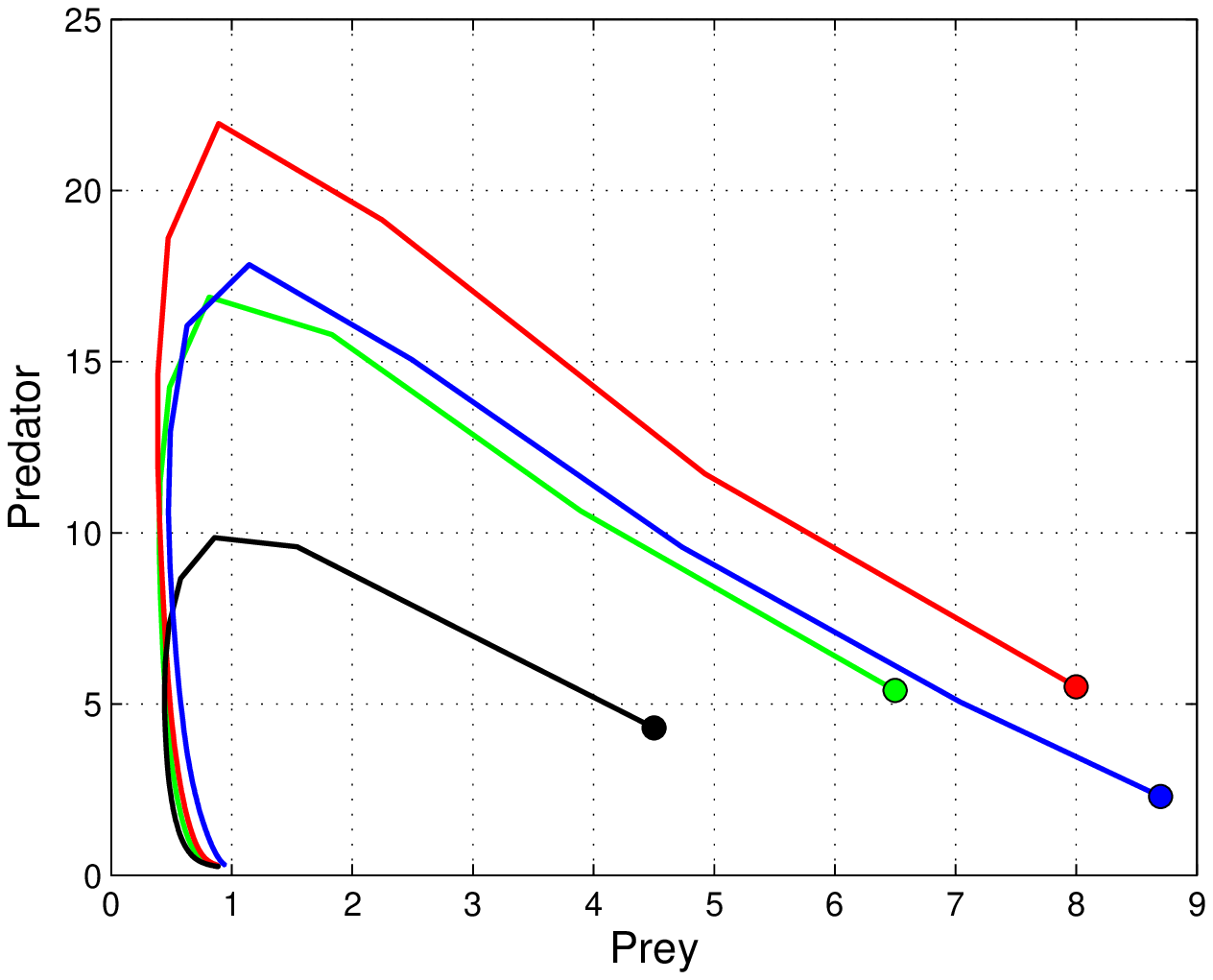}\\
        \footnotesize{(a) $h=0.01$} & \footnotesize{(b) $h=0.1$}
    \end{tabular}}
    \caption{Results obtained by the method (\ref{met}).}
\end{figure}
\end{center}

\subsection{Lost of stability}

 In the results presented in (\cite{JN}, Fig. 3(a)) for model (\ref{u}) with parameters $s=0.1$, $K=25$, $q=1$, $q_1=2$, $\beta=5$, $s_0=0.7$, $E=0.3$ we can observe a lost of stability of the numerical solutions. In this case the threshold for the stability of the equilibrium point $P_2=(0.3333,0.1644)$ is given by $R_0=2.4510$. The marginal value is $\tilde{\alpha}=0.9501$ and the initial condition is $(0.5,0.4)$.

 By Theorem \ref{tw:1}, the fixed point $P_2$ is stable for values of $\alpha$ less than the marginal value $\tilde{\alpha}$. However, the figure 3(a) in \cite{JN} shows an unstable behavior which is not coherent with the theoretical result. Using our scheme, we obtain for all values of $\alpha$ a simulation which is coherent with the expected dynamical behavior according to Theorem \ref{tw:1}.

\begin{center}
\begin{figure}[h]
\centerline{%
    \begin{tabular}{cc}
        \includegraphics[width=0.48\textwidth]{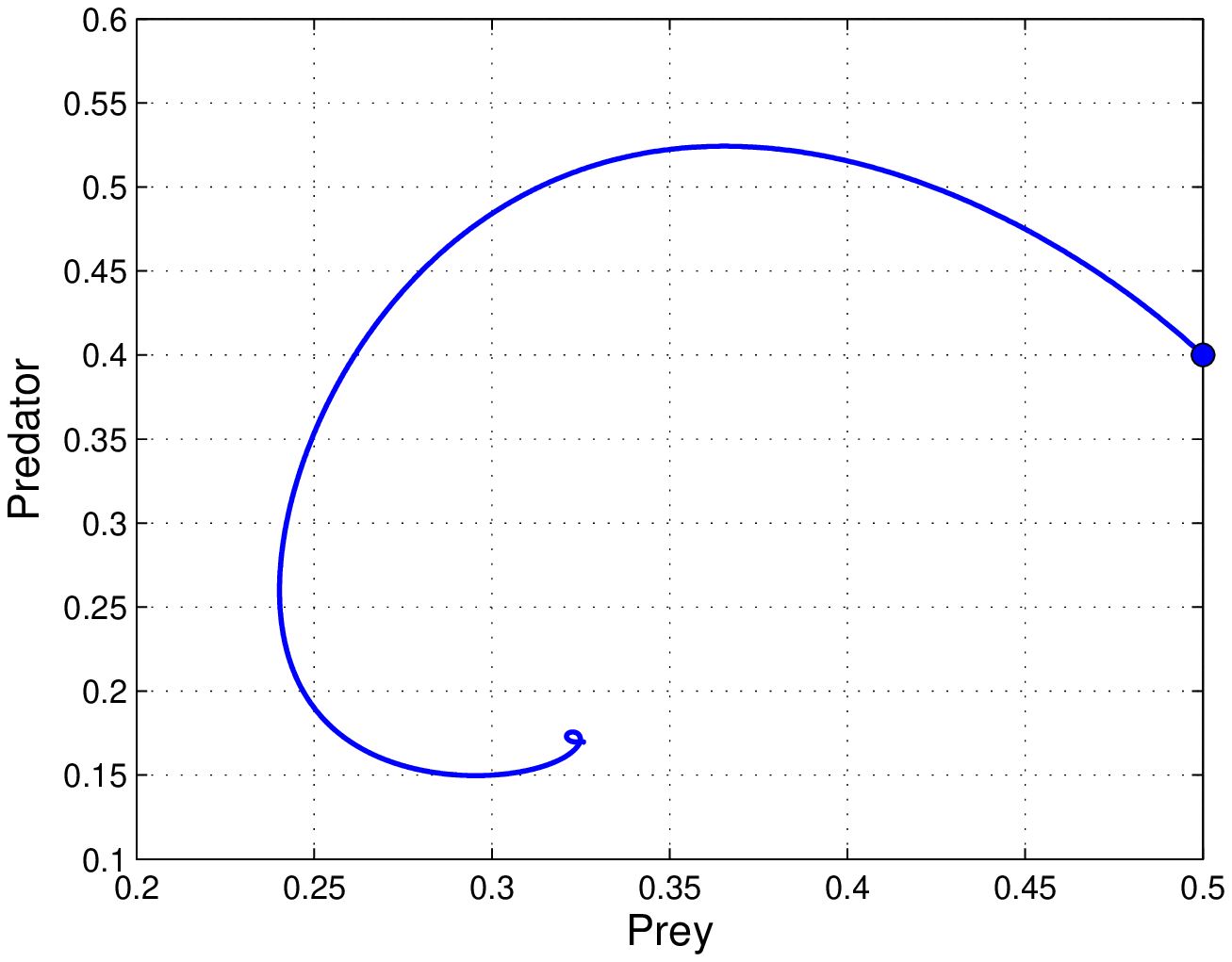} & \includegraphics[width=0.48\textwidth]{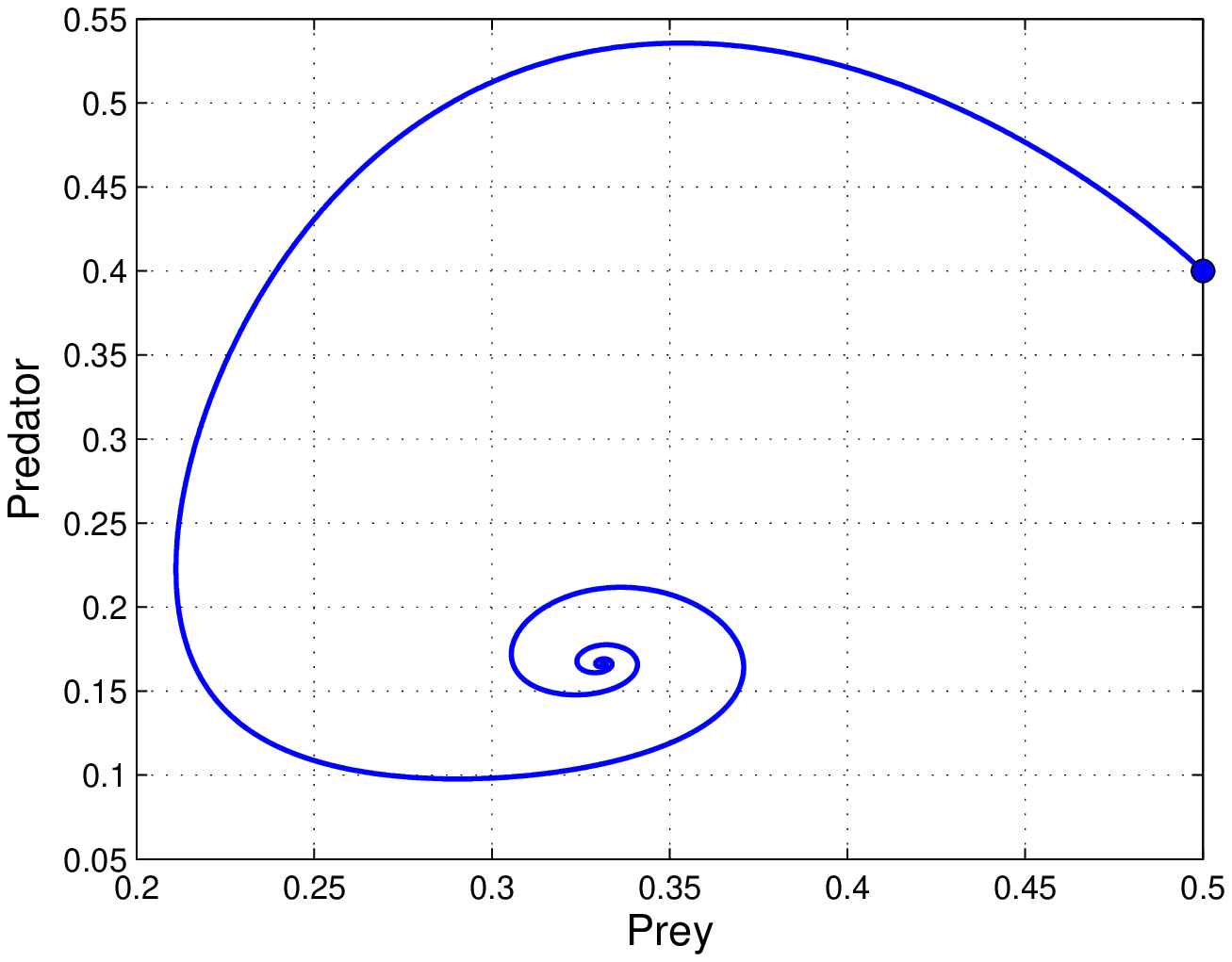}\\
        \footnotesize{(a) $\alpha=0.75$} & \footnotesize{$\alpha=0.85$}\\
    \end{tabular}}
\caption{Results obtained by the method (\ref{met}).}
\end{figure}
\end{center}

\subsection{More numerical results}

The previous numerical simulations are given in order to show that our numerical scheme solve many numerical problems arising in simulations of fractional systems. In this last Section, we provide more simulations which can be directly compared with the corresponding one in \cite{JN} but with a at least ten times bigger time increment, proving the efficiency of our scheme. As we can observe, all presented simulations confirm compliance with the theoretical analysis of the dynamical behavior of solutions of the model (\ref{u}).

\subsubsection{Behavior near the equilibrium point $P_1$}

We provide numerical simulations for the behavior of the fractional system near the equilibrium point $P_1$ which is always stable. Different values of $\alpha$ are tested. We provide first the phase portrait as well as the individual behavior of each variables. The set of parameters: $s=0.5$, $K=5$, $q=1$, $q_1=2$, $\beta=0.02$, $s_0=0.7$, $E=0.3$. The threshold for the stability of the equilibrium point $P_1=(5,0)$ is given by $R_0=0.0091$. The time step increment is $h=0.5$, the integration time is $T=500$ and as the initial point we take $(0.5,0.4)$.

\begin{center}
\begin{figure}[!h]
\centerline{%
    \begin{tabular}{c}
        \includegraphics[width=0.48\textwidth]{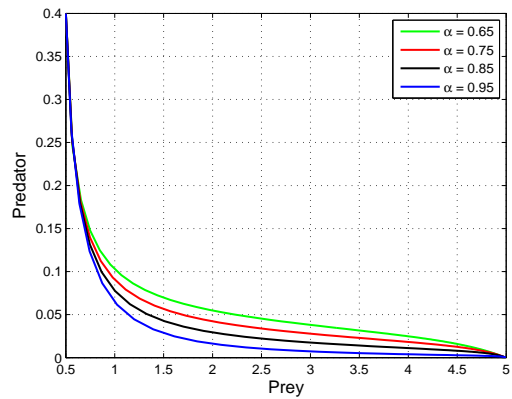}\\
        \footnotesize{(a) Phase portrait}\\
    \end{tabular}}
%
\centerline{%
    \begin{tabular}{cc}
        \includegraphics[width=0.48\textwidth]{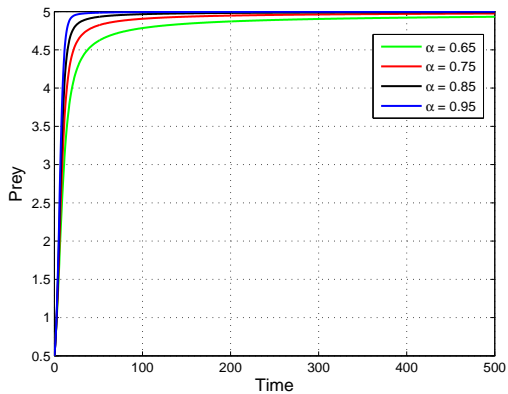} & \includegraphics[width=0.48\textwidth]{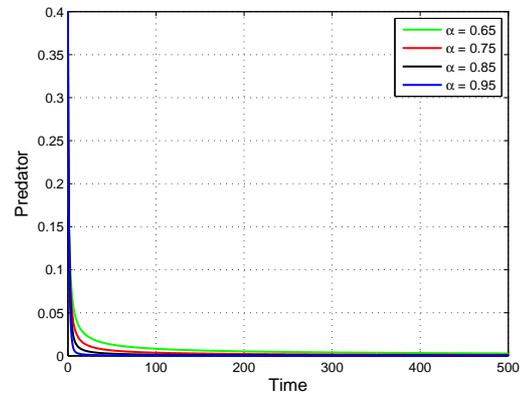}\\
        \footnotesize{(b) Density of prey} & \footnotesize{(c) Density of predator}\\
    \end{tabular}}
    \caption{Numerical simulations for different values of order $\alpha$.}
\end{figure}
\end{center}
\paragraph{Dynamical behavior of each variables near the equilibrium point $P_2$}
\label{dyn_beh}

In \cite{JN}, Javidi and al, the authors provide the dynamical behavior of each variables (p. 8953, Fig. 4).  Due to the change of stability produced by the numerical method, they observe densities for predator and prey which do not converge to the equilibrium point. In our case, with a bigger time step, we obtain a very good agreement with the theoretical expected behavior.

In order to observe the dynamical behavior of the numerical solutions  obtained by implementing the NSFD scheme, we consider the set of parameters: $s=0.1$, $K=5$, $q=1$, $q_1=2$, $\beta=5$, $s_0=0.7$, $E=0.3$. The threshold for the stability of equilibrium point $P_2=(0.3333,0.1556)$ is given by $R_0=2.2727$. The marginal value is $\tilde{\alpha}=0.9587$. The time step is $h=0.5$ and the integration time is $T=500$. The initial point is $(0.5,0.4)$.

\begin{center}
\begin{figure}[!h]
\centerline{%
    \begin{tabular}{cc}
        \includegraphics[width=0.48\textwidth]{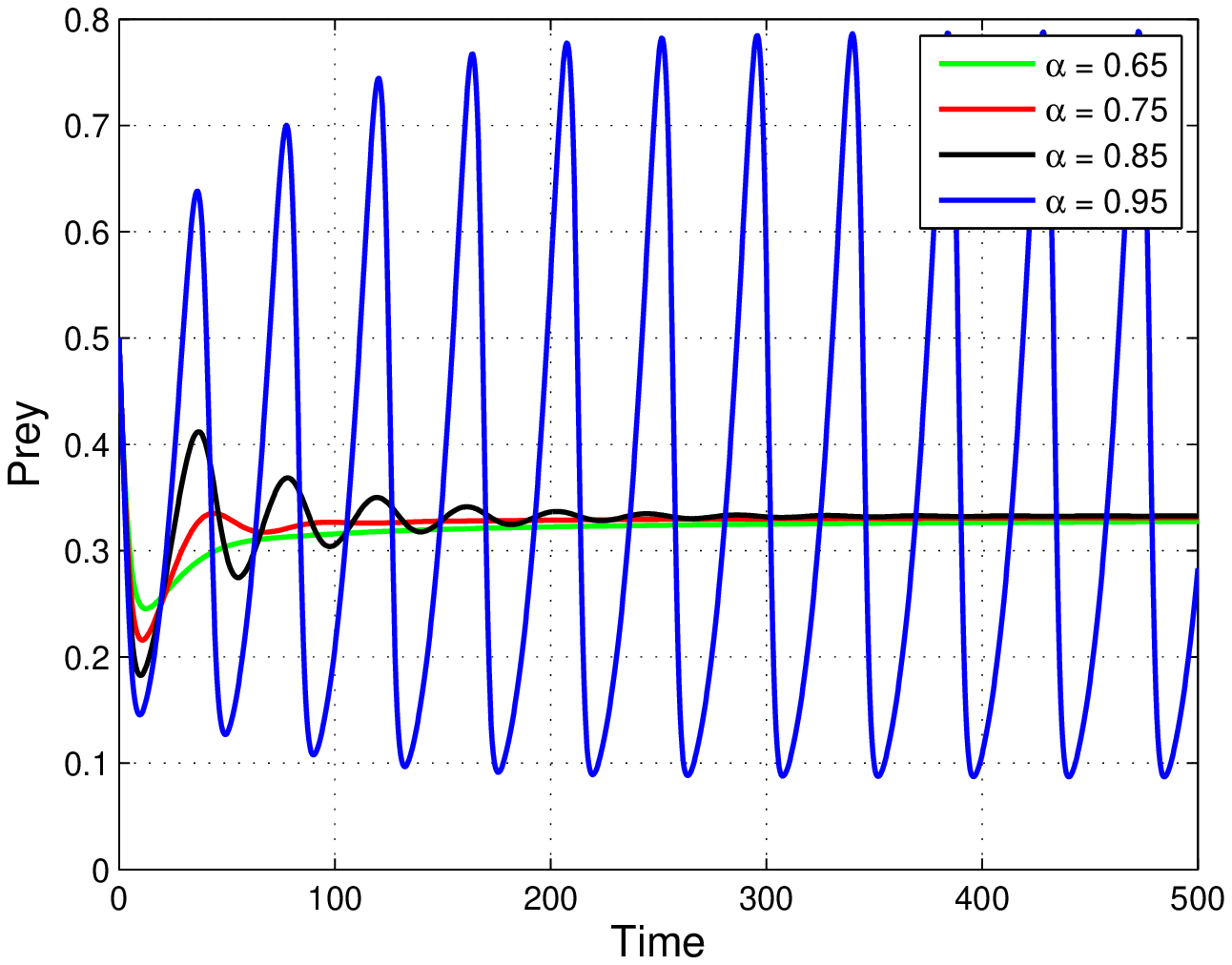} & \includegraphics[width=0.48\textwidth]{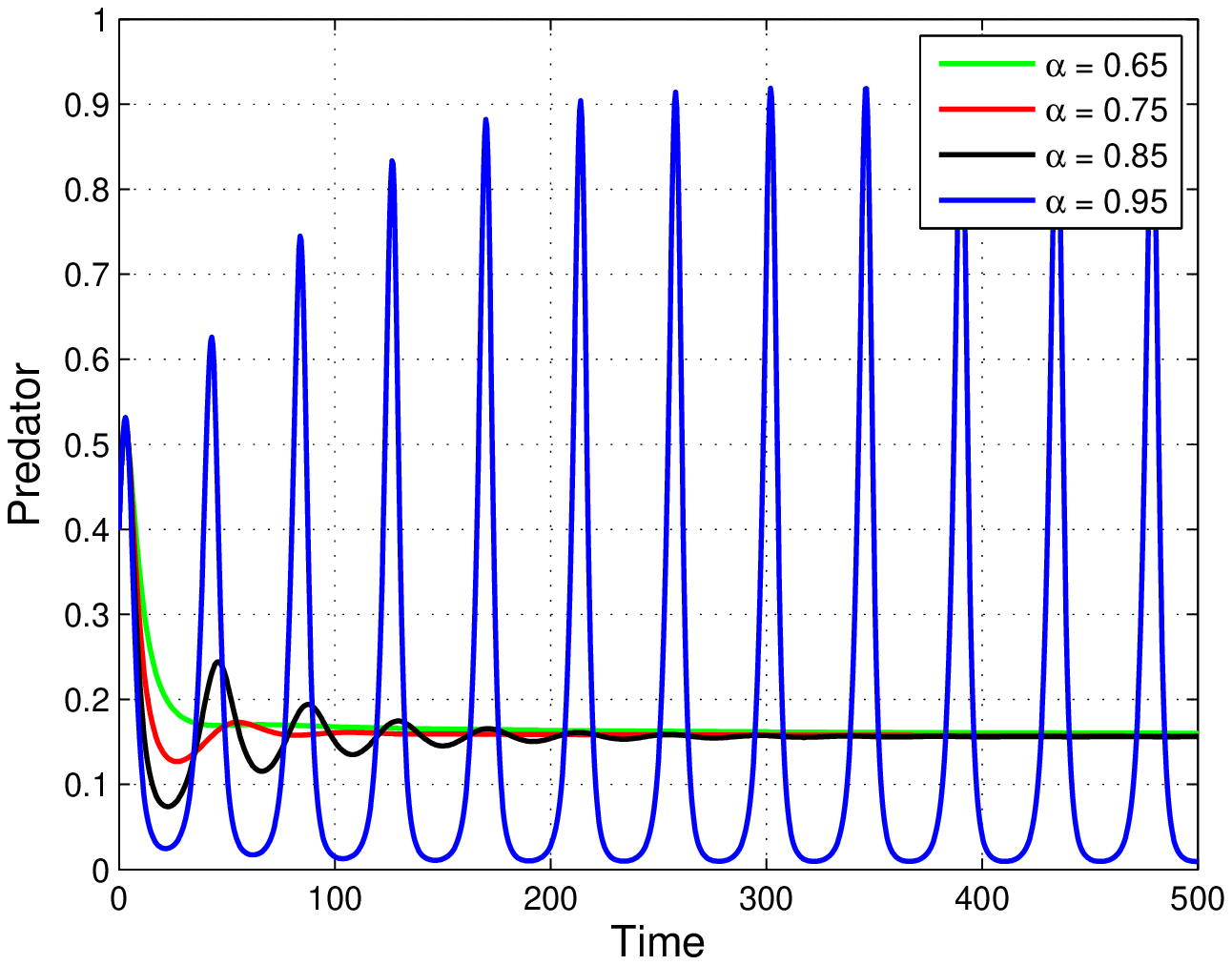}\\
    \end{tabular}}
    \caption{Densities of prey and predator respectively, for different values of order $\alpha$.}
\end{figure}
\end{center}

We provide also the simulations with parameters given by $s=5$, $K=5$, $q=0.1$, $q_1=2$, $\beta=4$, $s_0=0.5$, $E=0.3$ in order to compare with the figure presented in (\cite{JN}, Fig. 2, p. 8951). In this case, the threshold for the stability of the equilibrium point $P_2=(0.3333,77.7778)$ is given by $R_0=2.2727$. The marginal value is $\tilde{\alpha}=0.6576$. The time step is $h=0.5$ and the integration time is $T=300$. We take the initial point $(2.5,4.4)$.

\begin{center}
\begin{figure}[!h]
\centerline{%
    \begin{tabular}{c}
        \includegraphics[width=0.48\textwidth]{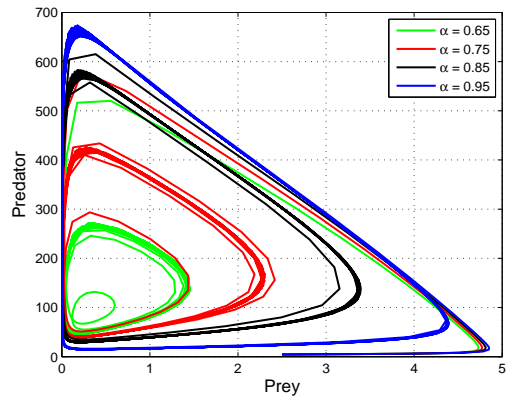}\\
        \footnotesize{(a) Phase portrait}\\
    \end{tabular}}
%
\centerline{%
    \begin{tabular}{cc}
        \includegraphics[width=0.48\textwidth]{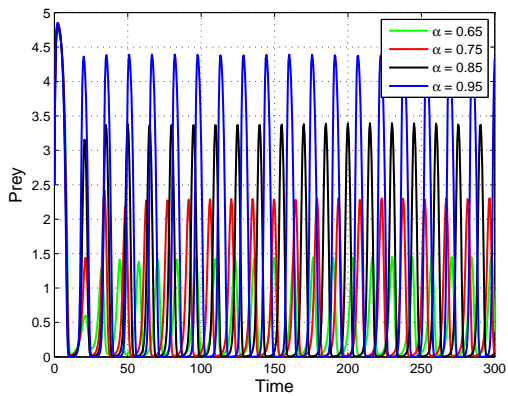} & \includegraphics[width=0.48\textwidth]{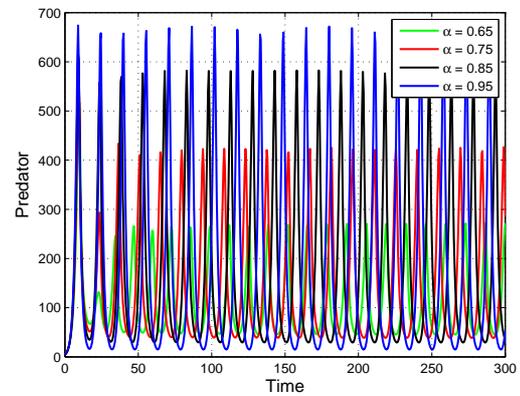}\\
        \footnotesize{(b) Density of prey} & \footnotesize{(c) Density of predator}\\
    \end{tabular}}
    \caption{Numerical simulations for different values of order $\alpha$. }
\end{figure}
\end{center}
The phase portrait possesses a good agreement with the theoretical result even if the time step increment if very big ($h=0.5$).

We have also tested what is the behavior of the fractional system when the predation is very high. We observe the following behavior starting with two species having almost the same density : the predator eat most of the prey at the beginning and then we have a rapid decreasing of the predator to an equilibrium level which is compatible with the  survival of enough prey to ensure the existence of predators. The results are presented on Figure 17. Numerical simulations are provided for different values of order $\alpha$ with parameters $s=0.1$, $K=5$, $q=1$, $q_1=2$, $\beta=15$, $s_0=0.7$, $E=0.3$. The threshold for equilibrium point $P_2=(0.0769,0.1136)$ is given by $R_0=6.8182$. The marginal value is $\tilde{\alpha}=0.9874$. The time step $h=0.2$ and the integration time is $T=300$. The initial point is $(0.5,0.4)$.

\begin{center}
\begin{figure}[!h]
\centerline{%
    \begin{tabular}{c}
        \includegraphics[width=0.48\textwidth]{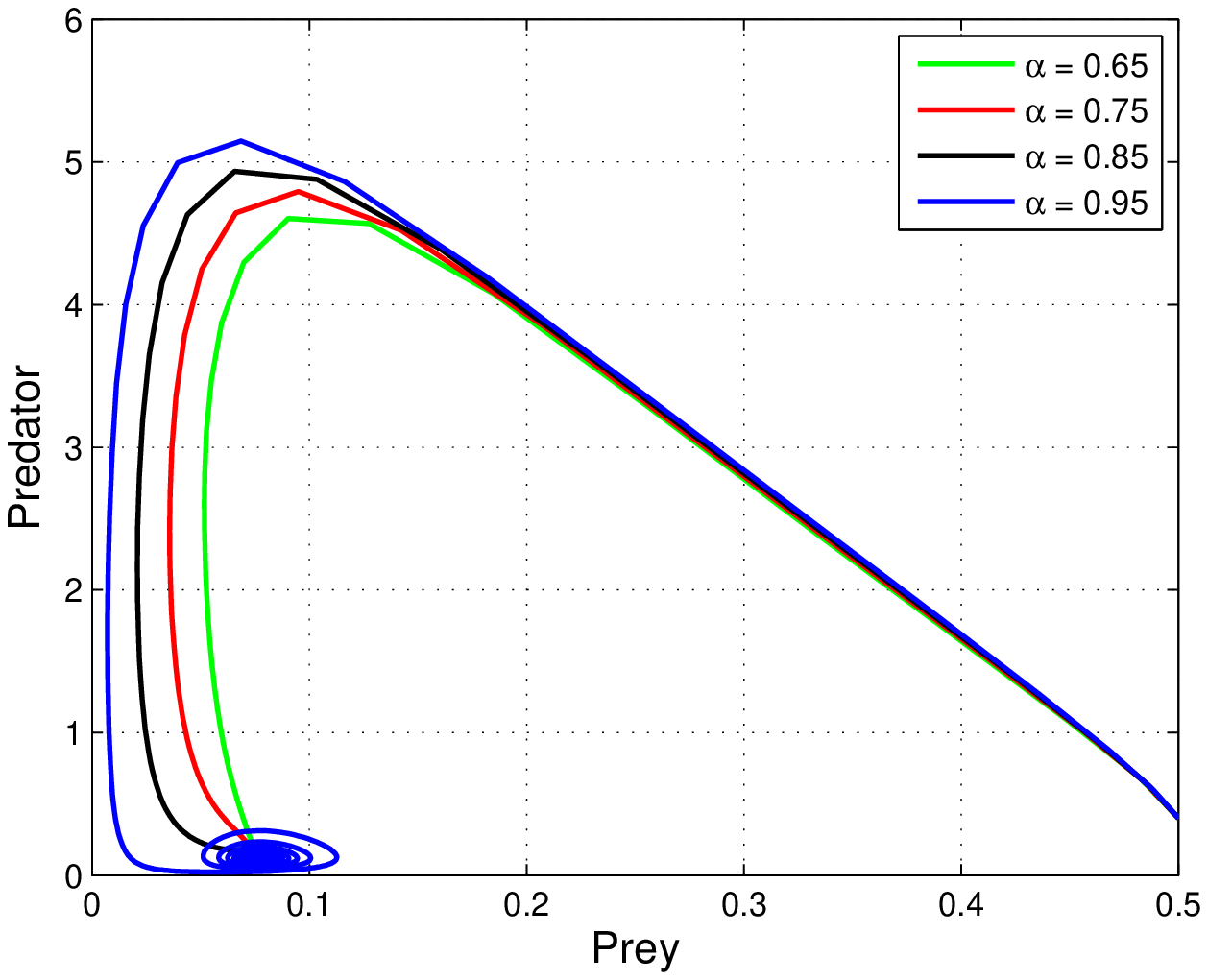}\\
        \footnotesize{(a) Phase portrait}\\
    \end{tabular}}
\centerline{%
    \begin{tabular}{cc}
        \includegraphics[width=0.48\textwidth]{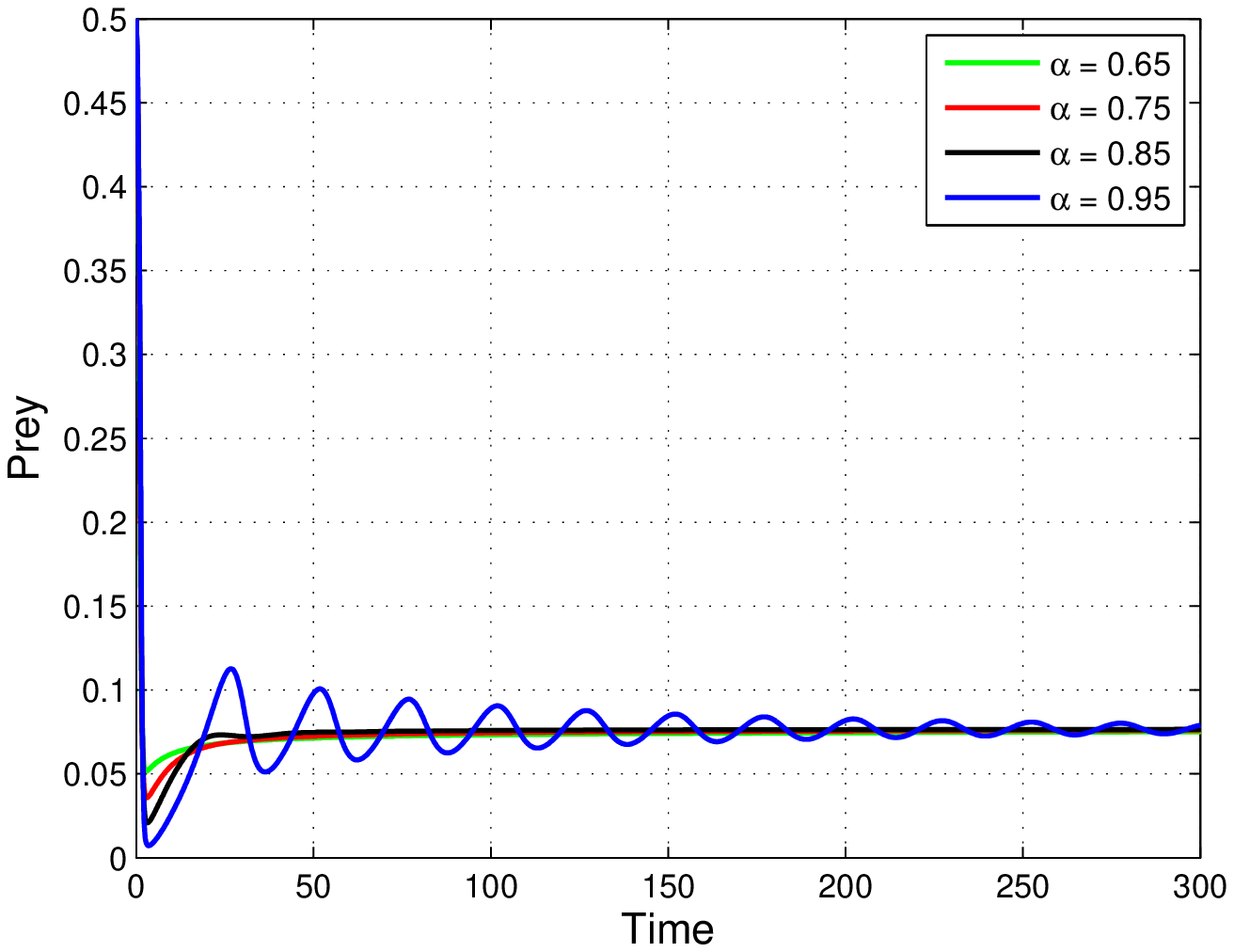} & \includegraphics[width=0.48\textwidth]{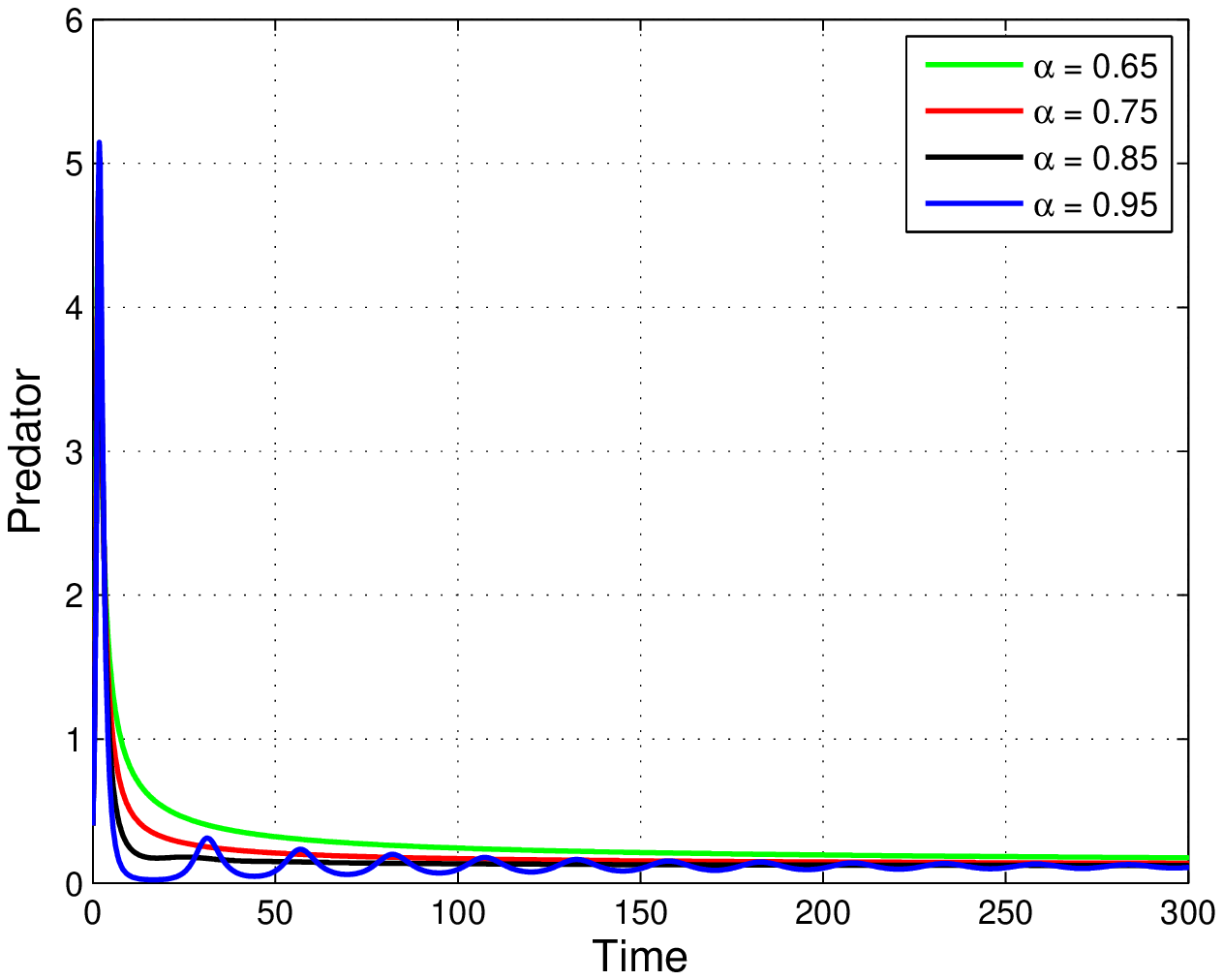}\\
        \footnotesize{(b) Density of prey} & \footnotesize{(c) Density of predator}\\
    \end{tabular}}
    \caption{Numerical simulations for different values of order $\alpha$.}
\end{figure}
\end{center}

\subsubsection{Robustness of the NSFD for big time step increment}

We have made some numerical simulations for the parameters studied in the last case of Section \ref{dyn_beh}. The following results show that the NSFD scheme gives very good agreement with the expected behavior even for very big values of the time step $h$ and a very long integration time. The set of parameters is: $s=0.1$, $K=5$, $q=1$, $q_1=2$, $\beta=15$, $s_0=0.7$, $E=0.3$, $R_0=6.8182$. We take as initial conditions $(0.5,0.4), (0.4,0.1), (0.5,1), (0.3,5)$.

\begin{center}
\begin{figure}[!h]
\centerline{%
    \begin{tabular}{cc}
        \includegraphics[width=0.48\textwidth]{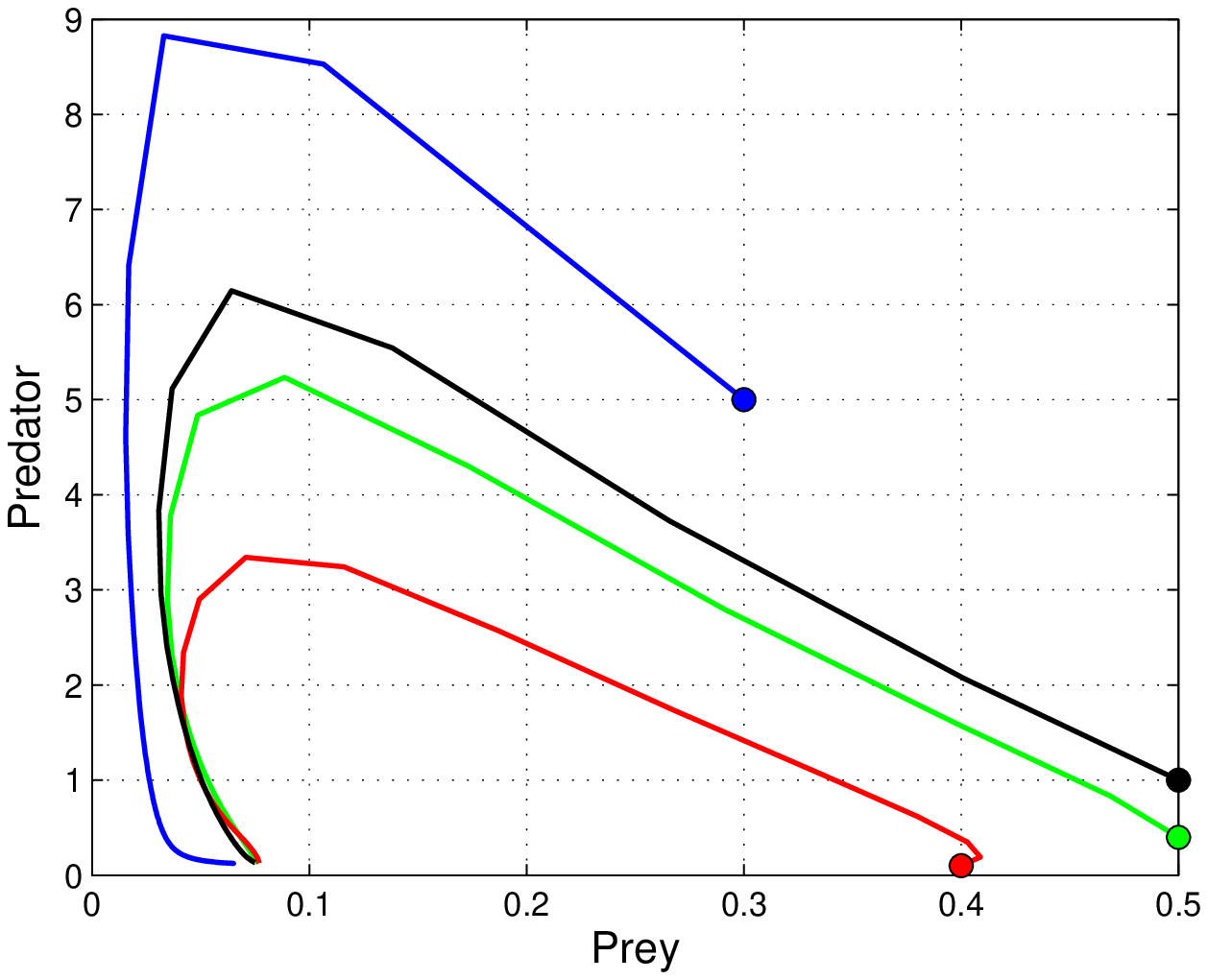} & \includegraphics[width=0.48\textwidth]{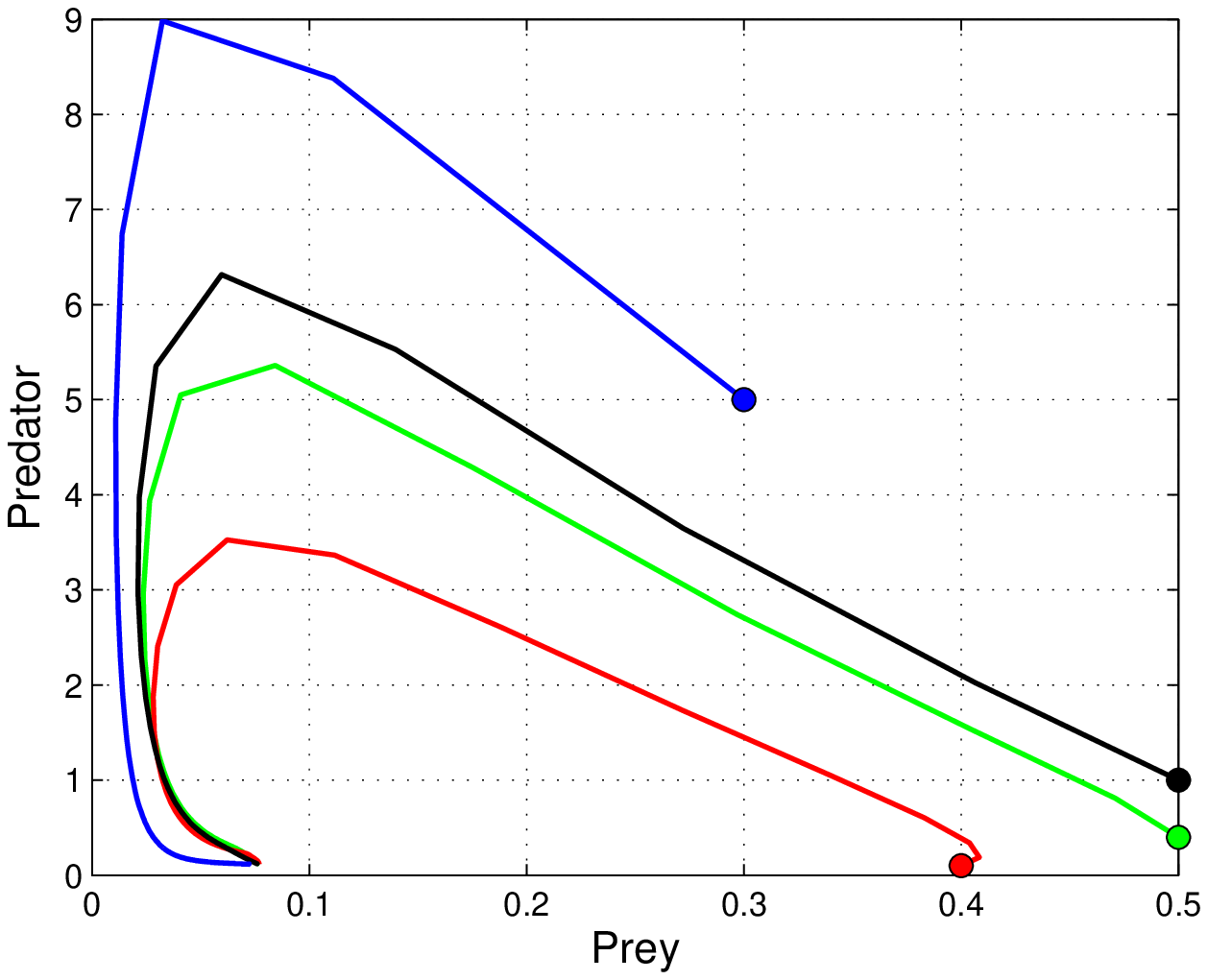}\\
        \footnotesize{(a) $\alpha=0.65$} & \footnotesize{(b) $\alpha=0.75$}\\
    \end{tabular}}
%
\centerline{%
    \begin{tabular}{cc}
        \includegraphics[width=0.48\textwidth]{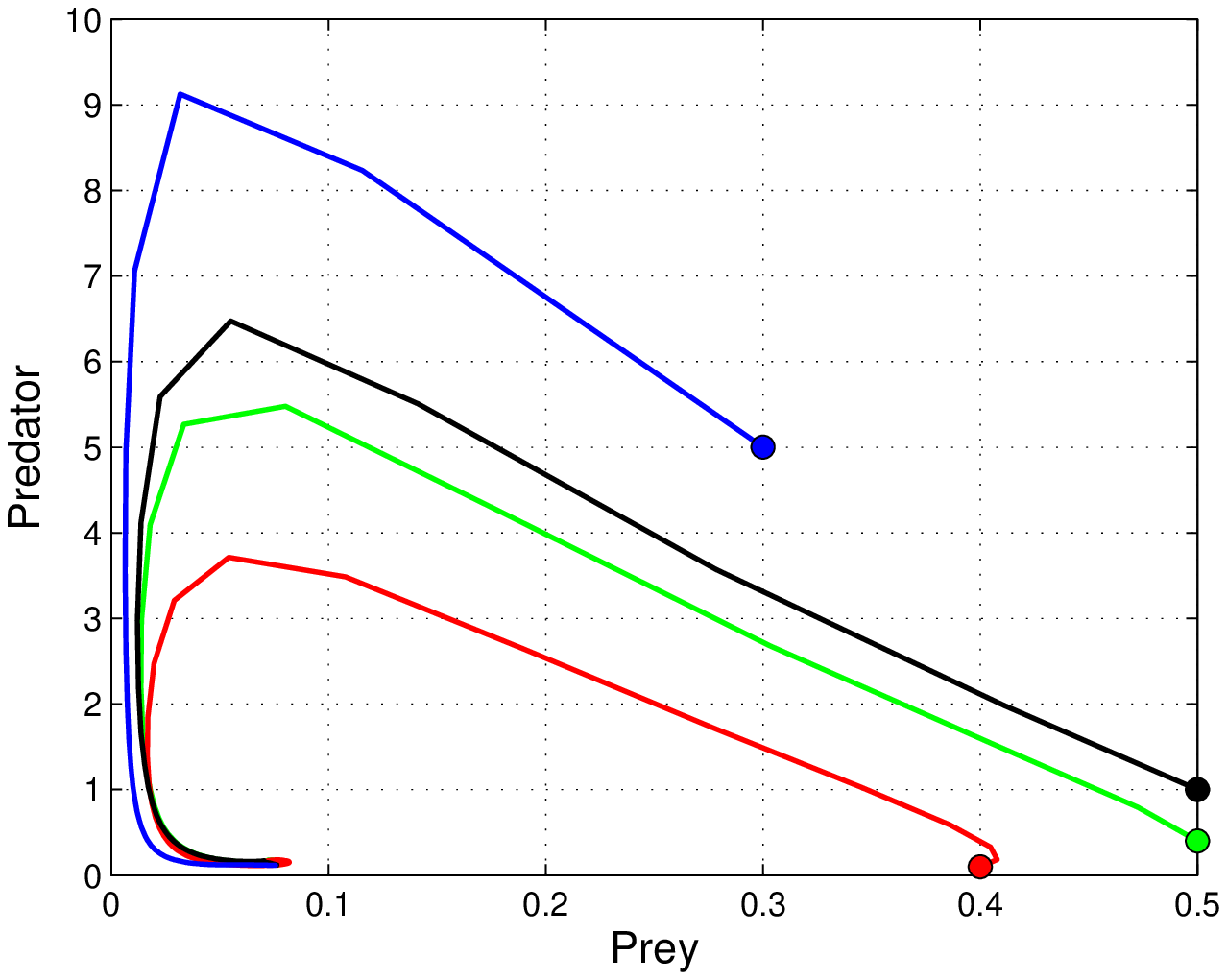} & \includegraphics[width=0.48\textwidth]{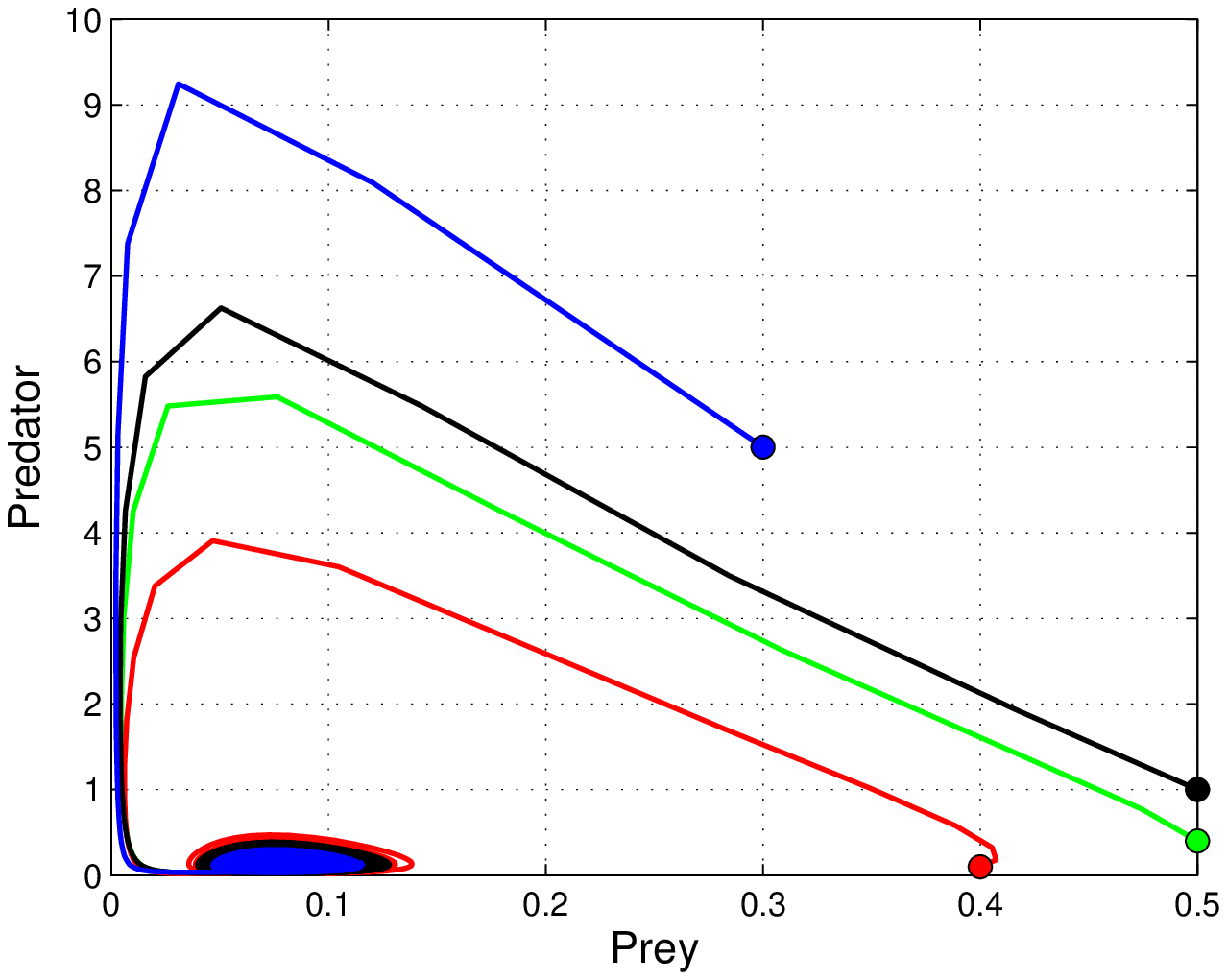}\\
        \footnotesize{(c) $\alpha=0.85$} & \footnotesize{(d) $\alpha=0.95$}\\
    \end{tabular}}
    \caption{Phase portraits for four cases of order $\alpha$ with $h=0.5$ and time integration $T=1500$.}
\end{figure}
\end{center}

\newpage

Jacky Cresson (*,**) and Anna Szafra\'{n}ska (***)

\begin{small}
(*)  Laboratoire de Math{\'e}matiques Appliquées de Pau, UMR CNRS 5142,

Université de Pau et des Pays de l’Adour,

avenue de l’Université, BP 1155, 64013 Pau Cedex, France.
\vskip 1mm
(*) SYRTE, UMR CNRS 8630,

Observatoire de Paris, France.
\vskip 1mm
(***) Department of Differential Equations and Mathematics Applications,

Gda\'nsk University of Technology,

G. Narutowicz Street 11/12, 80-233 Gda\'nsk, Poland

E-mail: aszafranska@mif.pg.gda.pl
\end{small}

\medskip

\begin{thebibliography}{100}
\bibitem{ang1} R. Anguelov, J.M.-S. Lubuma, On the nonstandard finite difference method, Keynote address at the Annual Congress of the South African Mathematical Society, Pretoria, South Africa, 16–18 October 2000, Notices S. Afr. Math. Soc. 31 (3) (2000) 143–152.

\bibitem{ang2} R. Anguelov, J.M.-S. Lubuma, Contributions to the mathematics of the nonstandard finite difference method and applications, Num. Methods Partial Differential Equations 17 (5) (2001) 518–543.

\bibitem{ASS} Ahmed E., El-Sayed A.M.A., El-Saka H.A.A.,{ \em Equilibrium points, stability and numerical solutions of fractional-order predator–prey and rabies models}, J. Math. Anal. Appl., Vol. 325 (2007) 542–553.

\bibitem{arafa} A.A.A. Arafa, S.Z. Rida, M. Khalil, A fractional-order model of HIV infection with drug therapy effect, Journal of the Egyptian Mathematical Society (2014) 22, 538-543.

\bibitem{ata1} T.M. Atanackovic, B. Stankovic, An expension formula for fractional derivatives and its applications, Frac. Calculus Appl. Anal. 7 (3) (2004) 365-378.

\bibitem{ata2} T.M. Atanackovic, B. Stankovic, On anumerical scheme for solving differential equations of fractional order, Mech. Res. Commun. 35 (7) (2008) 429-438.

\bibitem{bo} Bourdin L., {\it Contributions au calcul des variations et au principe du maximum de Pontryagin en calculs time scale et fractionnaire}, Ph.D. Thesis, 2013.

\bibitem{bourdin-cresson1} Bourdin L., Cresson J., Greff I., Inizan P., Variational integrator for fractional Euler-Lagrange equations, Applied Numerical Mathematics, 71:14-23, 2013.

\bibitem{bourdin-cresson2}  Bourdin L., Cresson J., Greff I., A continuous/discrete fractional Noether's theorem, Communication in Nonlinear Sciences and Numerical Simulations, 18(4):878-887, 2013.

\bibitem{bc} F. Brauer, C. Castillo-Chavez, {\it Mathematical Models in Population Biology and Epidemiology}, Springer, NewYork, 2001.

\bibitem{caja} Carja O., Donchev T., Rafaqat M., Ahmed R., Viability of fractional differential inclusions, Applied Mathematics Letters 38 (2014), 48-51.

\bibitem{chakra} S. Chakraborty, S. Pal, N. Bairagi, Predator-prey interaction with harvesting: a mathematical study with biological ramifications, Applied Mathematica Modelling 36 (2012) 4044-4059.

\bibitem{cresson-inizan} Cresson J., Inizan P., Variational formulations of differential equations and asymmetric fractional embedding, Journal of Mathematical Analysis and Applications 385 (2012) 975–997.

\bibitem{cresson-inizan2} Cresson J., Inizan P., About fractional Hamiltonian systems, Physica Scripta 2009 014007.

\bibitem{cp} Cresson, J., Pierret, F., {\em Nonstandard finite difference scheme preserving dynamical properties}, J. Compu. Appl. Math. 303 (2016) 15–30.

\bibitem{cresson13} Cresson J., Fractional variational embedding and Lagrangian formulations of dissipative partial differential equations, in Fractional Calculus in Analysis, dynamics and optimal control, p.65-127, Nova Publishers, New-York, 2013.

\bibitem{cresson07} Cresson J., Fractional embedding of Differential Operators and Lagrangian systems, J. Math. Phys. 48, 033504  (2007) (34 pages).

\bibitem{dk} D.T. Dimitrov and H.V. Kojouharov.
\newblock {Positive and Elementary Stable Nonstandard Numerical Methods with
  Applications to Predator-prey Models}.
\newblock {\em J. Comput. Appl. Math.}, 189(1-2):98--108, May 2006.

\bibitem{diethlem} K. Diethlem, A fractional calculus based model for the simulation of an outbreak of dengue fever, Nonlinear Dyn. (2013) 71:613-619.

\bibitem{diethlem2} K. Diethlem, Ford. N.J., Freed A.D., A predictor-corrector approach for the numerical solution of fractional differential equations, Nonlinear Dyn. 29,3-22 (2002).

\bibitem{diethlem3} K. Diethlem, Ford. N.J., Freed A.D., Detailed error analysis for a fractional Adams method, Numer. Algorithm 36,31-52 (2004).

\bibitem{freedman} H. Freedman, {\it Deterministic Mathematical models in Population ecology}, Marcel Dekker, New York, 1980.

\bibitem{girejko} Girejko E., Mozryrska D., Wyrvas M., A sufficient condition of viability for fractional differential equations with the Caputo derivative, J. Math. Anal. Appl. 381 (2011) 146-164.

\bibitem{goufo} E. F. Doungmo Goufo, R. Maritz, J. Munganga, Some properties of the Kermack-McKendrick epidemic model with fractional derivative and nonlinear incidence, Advances in Difference Equations 2014:278.

\bibitem{JN} Javidi M., Nyamoradi N., {\em Dynamic analysis of a fractional order prey–predator interaction with harvesting}, App. Math. Model. 37 (2013) 8946-8956.

\bibitem{KST} Kilbas A.A., Srivastava H.M., Trujillo J.J., {\em Theory and Applications of Fractional Differential Equations}, North-Holland Mathematics Studies, Vol. 204, Elsevier, Amsterdam (2006).

\bibitem{pod} Li Y., Chen Y., Podlubny I., Stability of fractional-order nonlinear dynamic systems: Lyapounov direct method and generalized Mittag-Leffler stability, Comp. Math. Appl. 59 (2010) 1810-1821.

\bibitem{lub} Lubich, C., {\em Discretized fractional calculus}, SIAM J. Math. Anal. 17(3), 704-719 (1986).

\bibitem{mat} Matignon D., {\em Stability results for fractional differential equations with applications to control processing}, Computational Eng. in Sys. Appl., Vol. 2, Lille, France 963 (1996).

\bibitem{mic1} Mickens R. E., {\em Difference equation models of differential equations having
zero local truncation errors}, Diflerential Equations, North-Holland, Amsterdam (1984), 445-449.

\bibitem{mic2} Mickens R. E., {\em Exact solutions to difference equation models of Burger's
equation}, Numerical Methods for Partial Differential Equations 2 (1986).

\bibitem{mic3} Mickens R. E., {\em Nonstandard Finite Difference Models of Differential Equations}, World Scientific, Singapore,(1994).

\bibitem{parra} G. G. Parra, A. J. Arenas, B. M. Chen-Charpentier, A fractional order epidemic model for the simulation of outbreaks of influenza A (H1N1), Mathematical methods in the Applied Sciences 2014, 37, p. 2218-2226.

\bibitem{pavel} N.H. Pavel and D.~Motreanu.
\newblock {\em {Tangency, Flow Invariance for Differential Equations, and
  Optimization Problems}}.
\newblock {Chapman \& Hall/CRC Pure and Applied Mathematics}. Taylor \&
  Francis, 1999.

\bibitem{pod} Podlubny, I., {\em Fractional Differential Equations. Mathematics in Science and Engineering}, vol. 198. Academic Press, San Diego (1999).

\bibitem{pooseh} Pooseh S., Rodrigues H.S., Torres D.F.M., Fractional derivatives in dengue epidemics, In Simos T.E., Psihoyios G., Tsitouras C., Anastassi Z., (eds.) Numerical Analysis and Applied Mathematics, ICNAAM, pp. 739-742, Amareican Institute of Physics, Melville (2011).

\bibitem{delfim} S. Pooseh, R. Almeida and D. F. M. Torres, Numerical approximations of fractional derivatives
with applications, Asian J. Control 15 (2013), no. 3, 698–712.

\bibitem{QV} Quarteroni A., Valli A., {\em Numerical Approximation of Partial Differential Equations}, Springer Series in Computational Mathematics, Springer-Verlag Berlin Heidelberg (1994, 2008).

\bibitem{rihan} F. A. Rihan, Numerical modeling of fractional-order biological systems, Abstract and applied Analysis 2013, Article ID 816803, 11 pages.


\bibitem{sadar} Sadar T., Rana S., Chattopadhyay J., A mathematical model of dengue transmission with memory, Commun Nonlinear Sci Numer Simulat 22 (2015) 511–525.


\bibitem{samko} S.G. Samko, A. Kilbas, O.I. Marichev, {\it Fractional integrals and derivatives. Theory and applications}, Gordon and Breach, Readings, 1993.

\bibitem{scherer} R. Scherer, S. L. Kalla, Y. Tang, J. Huang, The Grünwald-Letnikov method for fractional differential equations, Computers Mathematics with Applications 62 (2011) 902-917.

\bibitem{tavazoei} Tavazoei M, Haeri M. Chaotic attractors in incommensurate fractional order systems. Physica D 2008;237(20):2628–37.

\bibitem{trujillo} J.J. Trujillo, M. Rivero, B. Bonilla, On a Riemann-Liouville generalized Taylor's formula, J. Math. Ana. Appl. 231, 255-265 (1999).

\bibitem{vasun} Vasundgara Devi J., Lakshmikantham V., Nonsmooth analysis and fractional differential equations, Nonlinear Anal. TMA 70 (2009) 4151-4157.

\bibitem{walter}
W.~Walter.
\newblock {\em {Ordinary Differential Equations}}.
\newblock {Graduate Texts in Mathematics}. Springer New York, 1998.

\end{thebibliography}
\end{document}